\newcommand{\ra}{\rightarrow}		
\newcommand{\by}[1]{\stackrel{#1}{\ra}}
\newcommand{\ol}{\overline}		\newcommand{\wt}{\widetilde}
\newcommand{\iso}{\by \sim}              
\newtheorem{theorem}{Theorem}[section]
\newtheorem{proposition}[theorem]{Proposition}
\newtheorem{lemma}[theorem]{Lemma}
\newtheorem{corollary}[theorem]{Corollary}
\newtheorem{question}[theorem]{Question}
\newcommand{\ga}{\alpha}	
\newcommand{\gf}{\varphi}
	\newcommand{\BF}{\mbox{$\mathbb F$}}
	\newcommand{\BZ}{\mbox{$\mathbb Z$}}
\newcommand{\CM}{\mbox{$\mathcal M$}}
\newcommand{\ot}{\mbox{\,$\otimes$\,}}	\newcommand{\op}{\mbox{$\oplus$}}
\newcommand{\Aut}{\mbox{\rm Aut\,}}	
\newcommand{\Hom}{\mbox{\rm Hom\,}}	
\newcommand{\Um}{\mbox{\rm Um}}		
\newcommand{\GL}{\mbox{\rm GL}}
\begin{document}

\begin{center}
{{\bf \large Cancellation problem for projective modules over affine
algebras }\\\vspace{.2in} 
        {\large Manoj  Kumar Keshari}\\
\vspace{.1in}
{\small
Department of Mathematics, IIT Mumbai, Mumbai - 400076, India;\;
        keshari@math.iitb.ac.in}} 
\end{center}

\section{Introduction}
All the rings are assumed to be commutative Noetherian and all the
modules are finitely generated.

Let $A$ be a ring of dimension $d$ and let $P$ be a projective
$A$-module of rank $n$. We say that $P$ is {\it cancellative} if $P\op
A^m\iso Q\op A^m$ for some projective $A$-module $Q$ implies
$P\iso Q$.

A classical result of Bass (\ref{bass}) says that if $n>d$, then $P$
is cancellative. It is well known that Bass' result is best possible
in general (since tangent bundle of real $2$-sphere is stably trivial
but not trivial). However, Bass' result can be improved in some
specific cases which we describe below.

\begin{theorem}\label{z1}
$(i)$ If $A$ is an affine algebra of dimension $d$ over an algebraically
closed field, then Suslin \cite{suslin1} proved that every projective
$A$-module of rank $\geq d$ is cancellative. 

$(ii)$ If $A$ is an affine algebra of dimension $d$ over an infinite
perfect $C_1$-field $k$ such that $1/d! \in k$, then Suslin \cite{suslin}
proved that $A^d$ is cancellative. Subsequently, Bhatwadekar
(\cite{Bhh}, Theorem 4.1) proved that every projective $A$-module of
rank $d$ is cancellative.

$(iii)$ If $A$ is an affine algebra of dimension $d$ over $\BZ$, then
Vaserstein (\cite{su}, Corollary 18.1, Theorem 18.2) proved that $A^d$
is cancellative.  Subsequently, Mohan Kumar, Murthy and Roy
(\cite{mmr}, Corollary 2.5) proved that every projective $A$-module of
rank $d$ is cancellative.
\end{theorem}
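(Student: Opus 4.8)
The plan is not to reprove these results but to recall, in a uniform way, the route each proof takes, since in all three cases the gain over Bass' estimate \ref{bass} comes from the same reduction applied to a different ground ring. First I would dispose of the range $n>d$ by Bass' theorem and reduce to rank exactly $d$. Then I would translate cancellation into a transitivity statement: a unimodular element $\gf\colon P\op A\surj A$ has a kernel $Q$ with $Q\op A\iso P\op A$, so $P$ is cancellative if and only if $\Aut(P\op A)$ acts transitively on the set $\Um(P\op A)$ of such surjections. When $P$ is free this amounts to showing every rank-$d$ stably free $A$-module is free, for which it suffices --- and is what one actually proves --- that the elementary group $E_{d+1}(A)$ acts transitively on the unimodular rows $\Um_{d+1}(A)$. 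So everything reduces to collapsing an orbit set of unimodular rows or elements, and the three parts differ only in which feature of the base is spent to do it.

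For $(i)$, with $A$ affine of dimension $d$ over an algebraically closed field $k$, I would argue by induction on $d$: by elementary operations and a general-position (``moving'') argument, bring a given unimodular row of length $d+1$ to one whose last $d$ coordinates cut out a finite reduced subscheme, i.e. a finite set of closed points. Here algebraic closedness is essential --- the residue field at each such point is $k$ itself --- and one finishes with Suslin's results on $K_1$, Mennicke symbols and completion of unimodular rows over such algebras (for general $P$ of rank $d$ the same scheme runs after trivialising $P$ locally). I expect the genuinely hard part, and the reason the improvement on Bass appears here rather than earlier, to be precisely this rank-$d$ case, where one must manoeuvre among unimodular rows of length only $d+1$ over a $d$-dimensional ring; the whole technical weight of \cite{suslin1} is spent there.

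For $(ii)$ and $(iii)$ the same reduction is in force, but the arithmetic of the base enters differently. Over an infinite perfect $C_1$-field $k$ with $1/d!\in k$, I would follow Suslin in proving $E_{d+1}(A)$ transitive on $\Um_{d+1}(A)$ --- whence $A^d$ is cancellative --- by combining the Mennicke--Newman lemma with Suslin's theorem that $(a_0^{d!},a_1,\dots,a_d)$ is completable whenever $(a_0,\dots,a_d)$ is unimodular: the $C_1$ hypothesis supplies the dimension count needed to solve the auxiliary equations, and $1/d!\in k$ is exactly the denominator that completion trick produces. To reach arbitrary rank-$d$ projectives I would then invoke Bhatwadekar (\cite{Bhh}, Theorem~4.1), whose argument promotes Suslin's statement about $\Um_{d+1}(A)$ to the orbit statement for $\Um(P\op A)$ using the Euler class group, the addition and subtraction principles for the obstruction to generating an ideal by $d$ elements, and a Quillen-type local--global principle. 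Part $(iii)$ is parallel: here one uses that, although $A$ has Krull dimension $d$, an affine algebra over $\BZ$ behaves, for $K_1$-stability purposes, like a $(d-1)$-dimensional algebra over a field --- an arithmetic phenomenon going back to Bass--Milnor--Serre --- which already forces $A^d$ to be cancellative (Vaserstein), and one passes to arbitrary rank-$d$ projectives by Euler-class-group techniques adapted to the arithmetic setting (Mohan Kumar, Murthy and Roy, \cite{mmr}, Corollary~2.5). In every part the single obstacle is the same: bridging from ``cancellation in lower dimension, or for free modules'' to the top rank-$d$ case, which is exactly where the special hypothesis on $A$ is consumed.
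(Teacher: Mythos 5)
Theorem \ref{z1} is stated in the paper purely as a compilation of prior results, with citations to \cite{suslin1}, \cite{suslin}, \cite{Bhh}, \cite{su} and \cite{mmr}; the paper gives no proof of it, so there is no internal argument to measure your write-up against. Taken as a survey of how those references proceed, your sketch is broadly accurate: the reduction of cancellation to transitivity of $\Aut(A\op P)$ on $\Um(A\op P)$, the role of $E_{d+1}(A)$ acting on $\Um_{d+1}(A)$ in the free case, Suslin's $d!$-completion theorem together with the $C_1$ hypothesis in (ii), and the Bass--Milnor--Serre-type improvement of the stable range for algebras over $\BZ$ in (iii) are all the right ingredients, and the hard content is indeed concentrated in rank exactly $d$, the range $>d$ being Bass (\ref{bass}).

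One substantive comparison is worth making. For the ``subsequently'' halves of (ii) and (iii) --- passing from cancellativity of $A^d$ to cancellativity of every rank-$d$ projective --- you invoke the Euler-class-group and addition/subtraction machinery of \cite{Bhh} and \cite{mmr}. The paper's own Theorem \ref{q5} later supplies a different and lighter route to exactly these statements: choose $s$ as in Lindel's Lemma \ref{3.1}, form the finite extension $R=A[X]/(X^2-s^2X)$ (the cartesian square of $(A,A)$ over $A/s^2A$, by \ref{cartesian1} and \ref{cartesian}), use cancellativity of $R^d$ to trivialize a patched unimodular row, and transport the resulting matrix in $\GL_{d+1}(A,s^2A)$ to an automorphism of $A\op P$ via Wiemers' homomorphism \ref{w001}. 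So within this paper the second halves of (ii) and (iii) are corollaries of the first halves plus \ref{q5}; only the free-module cases genuinely require the cited external arguments. Your proposal is fine as a guide to the literature, but it should not be mistaken for a proof: each step you describe (general position, Mennicke symbols, the $d!$ theorem, the arithmetic stable range bound) is itself a major theorem whose proof you are deferring to the references, which is precisely what the paper does by stating \ref{z1} without proof.
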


We note that Bhatwadekar's proof \cite{Bhh} uses Suslin's
result \cite{suslin} that $A^d$ is cancellative. Similarly, the
proof of Mohan Kumar et. al. \cite{mmr} uses Vaserstein's
results \cite{su}. Hence, in view of the above results, we can ask
the following:

\begin{question}\label{q1}
Let $A$ be a ring of dimension $d$. Assume that $A^d$ is
cancellative. Is every projective $A$-module of rank $d$ cancellative?
\end{question}

In (\cite{bh01}, Example 3.11), Bhatwadekar has given an example of a
smooth real affine surface $A$ such that $A^2$ is cancellative, but
$K_A\op A$ is not cancellative, where $K_A$ is the canonical module of
$A$. Thus, the above question has negative answer in general. We will
modify the above question and prove the following result (\ref{q5}).

\begin{theorem}\label{m1}
Let $A$ be a ring of dimension $d$. Assume that for every finite
extension $R$ of $A$, $R^d$ is cancellative. Then every projective
$A$-module of rank $d$ is cancellative.
\end{theorem}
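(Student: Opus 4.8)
The plan is to translate cancellation of a rank-$d$ projective module into a transitivity statement for unimodular elements, to use Bass' theorem to reduce the genuinely new content to the case of a non-trivial determinant, and then to trade the problem over $A$ for the corresponding problem over a carefully chosen module-finite extension of $A$, where the hypothesis on free modules can be applied. First I would record the standard reformulation: $P$ of rank $d$ is cancellative if and only if $\Aut(A\op P)$ acts transitively on $\Um(A\op P)$, equivalently if and only if every $v\in\Um(A\op P)$ can be carried to $(1,0)$ by an automorphism of $A\op P$; indeed, if $P\op A\iso Q\op A$ then $Q$ is isomorphic to the complement in $A\op P$ of the free rank-one summand generated by a suitable $v\in\Um(A\op P)$, so $Q\iso P$ as soon as $v$ lies in the $\Aut(A\op P)$-orbit of $(1,0)$. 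I would also note that every module-finite extension $R\supseteq A$ has $\dim R=d$, so the hypothesis says exactly that $R^{\dim R}$ is cancellative for all such $R$.

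Next comes the determinant bookkeeping. From $P\op A\iso Q\op A$ one has $\det P\iso\det Q=:L$, and Bass' theorem (\ref{bass}), applied to the rank-$(d+1)$ modules $P\op A$ and $Q\op A$, allows one to strip off the free part and reduce the question to one about the line bundle $L$ and $L$-twisted unimodular rows of length $d+1$. In the subcase $L\iso A$ the modules $P$ and $Q$ are stably free of rank $d$, so the hypothesis applied to $R=A$ (``$A^d$ is cancellative'') already gives $P\iso Q$; the real content therefore lies in the case of non-trivial $L$.

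To treat non-trivial $L$ one cannot simply make $L$ free by a finite extension, so instead I would build, from the data $(P,Q,v)$, a module-finite extension $R\supseteq A$ over which the obstruction to moving $v$ to $(1,0)$ is killed --- plausibly something of the form $A[T]/(f)$ for a monic $f$ whose coefficients are produced from the coordinates of $v$, in the spirit of the completion arguments underlying \ref{z1}, or a finite extension otherwise adapted to $L$ --- so that the cancellation of $R^{d}$ yields $v\ot R\sim(1,0)$, i.e. $P\ot R\iso Q\ot R$. Then I would descend: since $R$ is module-finite over $A$ (and, for $R=A[T]/(\mathrm{monic})$, free, hence faithfully flat) and the construction of $R$ is compatible with specialising or reversing it, a Quillen--Suslin-type local--global patching argument converts ``completable over $R$'' back into ``completable over $A$'', using throughout that $P\op A\iso Q\op A$ is already known over $A$.

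I expect essentially all the difficulty to be concentrated in this last step: choosing $R$ so that simultaneously the added structure kills the $L$-obstruction, the free-module hypothesis becomes usable over $R$, and the descent succeeds --- bearing in mind that isomorphisms of projective modules do not descend along finite extensions for formal reasons (the obstruction is a non-abelian $H^1$), so the explicit shape of $R$ must be exploited in the patching. The surrounding reductions --- the transitivity reformulation, the determinant bookkeeping, and checking that the auxiliary ring is indeed a finite extension of $A$ with $\dim R=d$ --- are routine by comparison.
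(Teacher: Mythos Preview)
Your proposal contains a genuine error and misses the mechanism the paper actually uses.

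The error is in your determinant bookkeeping: you assert that when $L=\det P\iso A$ the modules $P$ and $Q$ are stably free, so that the hypothesis applied to $R=A$ finishes this subcase. This is false. Trivial determinant does not imply stably free (there are rank-$2$ projective modules with trivial determinant over surfaces that are not stably free). Hence your ``easy subcase'' is not easy, and the trivial/non-trivial determinant split buys nothing.

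More fundamentally, the paper never attempts to descend an isomorphism from a finite extension back to $A$; as you yourself note, such descent fails for formal reasons. Instead the finite extension is chosen independently of the unimodular element, depending only on $P$. One first reduces to $A$ reduced, then Lindel's lemma (\ref{3.1}) supplies a non-zero-divisor $s\in A$ with $P_s$ free and an explicit ``almost-basis'' $p_1,\dots,p_d\in P$, $\phi_1,\dots,\phi_d\in P^*$ satisfying $(\phi_i(p_j))=\diag(s,\dots,s)$ and $sP\subset\sum Ap_i$. The finite extension used is $R=A[X]/(X^2-s^2X)$, which by (\ref{cartesian}) is precisely the fibre product of two copies of $A$ over $A/s^2A$. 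Given $(f,q)\in\Um^1(A\op P,s^2A)$, one writes $q$ in the almost-basis to get a genuine unimodular row $(f,f_1,\dots,f_d)\in\Um_{d+1}(A,s^2A)$, and patches it with $(1,0,\dots,0)$ over $A/s^2A$ to obtain a unimodular row over $R$. Cancellation of $R^d$ yields a completion in $\GL_{d+1}(R)$; its two projections to $A$ agree modulo $s^2$, so their quotient lies in $\GL_{d+1}(A,s^2A)$ and carries $(f,f_1,\dots,f_d)$ to $(1,0,\dots,0)$. The crucial step, entirely absent from your outline, is Wiemers' conversion lemma (\ref{w001}): it turns any such matrix congruent to the identity modulo $s^2$ into an automorphism of $A\op P$ sending $(f,q)$ to $(1,0)$. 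The remaining reduction --- bringing an arbitrary element of $\Um(A\op P)$ into $\Um^1(A\op P,s^2A)$ --- uses only that $\dim A/s^2A<d$, so Bass (\ref{bass}) applies over the quotient and the resulting transvections lift by (\ref{trans}).

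In short, the finite extension is not a vehicle for descent but a device for manufacturing a matrix in $\GL_{d+1}(A,s^2A)$ via the fibre-product description; the passage from that matrix back to $\Aut(A\op P)$ is the real content, and it goes through the Lindel data and (\ref{w001}), not through any determinant considerations.
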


For a ring $k$, a finite extension of an affine $k$-algebra is an
affine $k$-algebra. Hence, in (\ref{z1}), assuming the result of
Suslin \cite{suslin}, our result gives an alternative proof of
Bhatwadekar's result \cite{Bhh}.  Similarly, assuming the result of
Vaserstein \cite{su}, it gives an alternative proof of Mohan Kumar
et. al. \cite{mmr}.

Regarding question (\ref{q1}), Bhatwadekar (\cite{bh01}, Proposition
3.7) proved the following interesting result: Let $A$ be a ring of
dimension $2$ and let $P$ be a projective $A$-module of rank $2$. If
$\wedge^2(P)\op A$ is cancellative, then $P$ is cancellative. In
particular, if $A^2$ is cancellative, then every projective $A$-module
of rank $2$ with trivial determinant is cancellative. In view of this
result, Bhatwadekar (\cite{bh00}, Question VII) asked the following
question which is open for $d\geq 3$.

\begin{question}
Let $A$ be a ring of dimension $d$. Assume that $A^d$ is
cancellative. Is every projective $A$-module of rank $d$ with trivial
determinant cancellative?
\end{question}

In \cite{mk}, Mohan Kumar has given an example of a smooth affine algebra of
dimension $n\geq 4$ over which there exist projective modules of rank
$n-2$ that are not cancellative. More precisely, he proved the
following: let $p$ be a prime integer and let $k$ be any algebraically
closed field. Then there exists an $f\in A=k[X_1,\ldots,X_{p+2}]$ and
a projective $A_f$-module $P$ of rank $p$ such that $P\op A_f\iso
A_f^{p+1}$ but $P\not\iso A_f^p$, i.e. $P$ is not cancellative.

In view of the above results, the only case remaining regarding
cancellation problem is when rank $P=\dim A -1$.

\begin{question}\label{q2}
Let $A$ be an affine algebra of dimension $n\geq 3$ over an
algebraically closed field $k$. Let $P$ be a projective $A$-module of
rank $n-1$. Is $P$ cancellative?
\end{question}

This is not known even when $n=3$ and $P=A^2$. We prove the following
result (\ref{d-1}) which is analogue of (\ref{m1}) for affine algebras
over $\ol\BF_p$ .

\begin{theorem}
Let $A$ be an affine algebra of dimension $d\geq 4$ over
$\ol\BF_p$. Assume that if $R$ is a finite extension of $A$, then
$R^{d-1}$ is cancellative. Then every projective $A$-module of rank $d-1$
is cancellative. 
\end{theorem}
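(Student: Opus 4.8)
The plan is to transplant the proof of Theorem~\ref{m1} from corank $0$ to corank $1$; the skeleton is the same, and the two genuinely new inputs are Suslin's cancellation theorem for modules of rank $\geq\dim$ over the algebraically closed field $\ol\BF_p$ (Theorem~\ref{z1}$(i)$), needed to absorb the extra unit of rank that corank $1$ forces, and the hypothesis $d\geq 4$, which is exactly what keeps the relevant orbit and Euler-class machinery valid one step below the top.

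\textbf{Step 1: reduction to unimodular elements.} It suffices to treat $m=1$. Assume $P\op A\iso Q\op A$ with $P,Q$ projective of rank $d-1$ and put $L=\wedge^{d-1}P\iso\wedge^{d-1}Q$; I must show $P\iso Q$. Carrying the unimodular element $(0,1)\in Q\op A$ across the isomorphism produces $v\in\Um(A\op P)$ with $(A\op P)/Av\iso Q$, whereas $(A\op P)/Av_0\iso P$ for $v_0=(1,0)$; it is enough to prove that $v$ and $v_0$ lie in one $\Aut(A\op P)$-orbit. Now $A\op P$ has rank $d=\dim A$, and since $d\geq 4$ (equivalently $2(d-1)\geq d+2$) both van der Kallen's abelian group structure on $\Um(A\op P)/E(A\op P)$ and the corank-$1$ Euler class group of $A$ with coefficients in $L$ become available; the difference between $v$ and $v_0$ is then governed by a single obstruction class $\varepsilon$ (in the Euler class group, equivalently in van der Kallen's group), and the goal is reduced to $\varepsilon=0$.

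\textbf{Step 2: move, then pass to a finite cover.} Since $\ol\BF_p$ is infinite and $d-1\geq 3$, Bertini-type general-position arguments (Mohan Kumar, Murthy--Swan, Bhatwadekar--Das--Mandal) represent $\varepsilon$ by a surjection $A\op P\surj J$ onto an ideal $J$ with $A/J$ a reduced curve, carrying its induced local orientation. I then choose a finite extension $R$ of $A$ adapted to this datum --- passing first, if necessary, to a normalization, and then to a finite cover along which the curve $A/J$ and the modification it prescribes become standard (the relevant residue fields are already $\ol\BF_p$, so nothing field-theoretic obstructs, and Suslin's Theorem~\ref{z1}$(i)$ cancels the auxiliary rank-one summand that appears in the process). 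The upshot is that over $R$ the image of $\varepsilon$ records exactly the freeness of a stably free $R$-module of rank $d-1$, hence vanishes by the hypothesis that $R^{d-1}$ is cancellative.

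\textbf{Step 3: descent.} It remains to deduce $\varepsilon=0$ over $A$ from the vanishing of its image over every finite extension $R$, and this is the crux of the argument. The idea is to combine a transfer (norm) map, from the Euler class group of $R$ down to that of $A$ and available because $A\inj R$ is finite, with a patching argument of Quillen--Suslin--Lindel type that carries the completion of $v$ back downstairs; since a single transfer only yields multiplication by $[R:A]$, the argument must be run along a cofinal family of finite covers --- which is exactly why the hypothesis is imposed for \emph{every} finite extension rather than for $A$ alone, a feature the example of \cite{bh01} shows cannot be dropped --- and then a limit taken. Getting this descent to close up, that is, controlling the failure of injectivity of the transfer in the limit and verifying that the completion assembled upstairs genuinely descends, is the step I expect to be the real work; once it is in place $\varepsilon=0$, hence $P\iso Q$, and the argument parallels that of Theorem~\ref{m1} throughout.
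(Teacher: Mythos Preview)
Your proposal has a genuine gap in Step~3. The descent you describe---a transfer map on Euler class groups combined with Quillen--Suslin--Lindel patching, run over a cofinal family of finite covers and then passed to a limit---is not an established technique, and you yourself flag it as ``the real work'' without indicating how to close it. No such norm-and-limit mechanism for Euler class groups or for van der Kallen's orbit set is known to give the injectivity you need in this generality, and there is no reason to expect one: the obstruction to descending a completion from $R$ to $A$ is exactly what you are trying to kill, so the argument is circular as written. Steps~1 and~2 also bring in far more machinery than is needed (and in Step~1 the van der Kallen group structure on $\Um_d/E_d$ requires $d\geq 4$ for \emph{free} modules, but its extension to $\Um(A\op P)/E(A\op P)$ for non-free $P$ is a separate issue you do not address).

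The paper's proof is short, elementary, and structurally quite different. It uses neither Euler class groups nor van der Kallen's group. Given $P$ of rank $d-1$, one chooses $s\in A$ as in Lindel's Lemma~\ref{3.1} and observes that the \emph{single specific} finite extension $R=A[X]/(X^2-s^2X)$ is, by Corollary~\ref{cartesian}, precisely the fibre product of $(A,A)$ over $A/s^2A$. The hypothesis then gives $R^{d-1}$ cancellative, and Lemma~\ref{q3} (a direct application of Wiemers' automorphism Lemma~\ref{w001} via this fibre product) yields that $\Aut(A\op P,sA)$ acts transitively on $\Um^1(A\op P,s^2A)$. The condition $d\geq 4$ is used not for orbit-group structure but to ensure $\dim A/s^2A=d-1\geq 3$, so that Theorem~\ref{kumar} (the projective-stable-range bound over $\ol\BF_p$) applies without the regularity hypothesis and lets one move any element of $\Um(A\op P)$ into $\Um^1(A\op P,s^2A)$ by an automorphism. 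Suslin's Theorem~\ref{z1}$(i)$ is invoked only to reduce ``$P$ cancellative'' to transitivity of $\Aut(A\op P)$ on $\Um(A\op P)$. Thus the hypothesis ``for every finite extension'' is applied to exactly one extension, tailored to the element $s$; no descent argument is needed at all.
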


Let $R$ be an affine algebra of dimension $d-1$ over an algebraically
closed field $k$ with $1/(d-1)! \in R$. Then Wiemers (\ref{w002})
proved that projective $R[X]$-modules of rank $d-1$ are cancellative,
thus answering question (\ref{q2}) in affirmative in the polynomial
ring case. We prove the following two results (\ref{1.3}) and
(\ref{z9}, \ref{z11}) which answers question (\ref{q2}) in affirmative
in some special cases.

\begin{theorem}
Let $k$ be an algebraically closed field with $1/d! \in k$ and let $R$
be an affine $k$-algebra of dimension $d$. Assume that $f(T)\in R[T]$ is
a monic polynomial and either

$(i)$ $A=R[T,1/f]$ or 

$(ii)$ $A=R[T,f_1/f,\ldots,f_r/f]$, where $f,f_1,\ldots,f_r\in R[T]$
is a regular sequence.

Then $A^d$ is cancellative.
\end{theorem}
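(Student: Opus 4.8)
The plan is to reduce the statement to the transitivity of the elementary group $E_{d+1}(A)$ on the set $\Um_{d+1}(A)$ of unimodular rows of length $d+1$, and then to use the monic polynomial $f$ to transport that transitivity from $R$ — where it is available by Suslin's theorem — up to $A$. First, in both cases $A$ is an affine $k$-algebra of dimension $d+1$: it is a localization of $R[T]$ that does not collapse the generic fibre. By Suslin's cancellation theorem over an algebraically closed field (Theorem~\ref{z1}$(i)$), every projective $A$-module of rank $\geq d+1$ is cancellative. Hence, if $Q\op A^m\iso A^{d+m}$ with $Q$ of rank $d$, we may write $A^{d+m}=A^{d+1}\op A^{m-1}$ and cancel the free modules $A^{d+1},A^{d+2},\dots$ (each of rank $\geq d+1$, hence cancellative) one summand at a time to reduce to $Q\op A\iso A^{d+1}$. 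Thus it suffices to prove that every $v\in\Um_{d+1}(A)$ lies in the $E_{d+1}(A)$-orbit of $e_1$: this forces the corresponding stably free module to be free, since $E_{d+1}(A)\subseteq\GL_{d+1}(A)$.

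So fix $v\in\Um_{d+1}(A)$ and consider case $(i)$, $A=R[T,1/f]$. Since $\Um_{d+1}$ is being computed over a $(d+1)$-dimensional ring — one step below the Bass range (\ref{bass}) — Bass' theorem is useless and one must exploit the variable $T$ together with the monicity of $f$. The plan is twofold. First, a Horrocks/Lindel-type argument valid for localizations at a \emph{monic} polynomial — whose engine is that $R[T]/(f)$ is finite over $R$, so that Quillen-style patching along $f$ applies, and into which the cancellation statement for $R[T]$ coming from the appropriate form of Wiemers' theorem~(\ref{w002}) feeds (this being legitimate because $R$ is affine of dimension $d$ over an algebraically closed field with $1/d!\in k$) — shows that $v$ is $E_{d+1}(A)$-equivalent to the image of a unimodular row $w$ with entries in $R$. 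Second, Suslin's theorem (Theorem~\ref{z1}$(i)$, or \cite{suslin} via $1/d!\in k$) says $\Um_{d+1}(R)$ is a single $E_{d+1}(R)$-orbit, so $w\sim_{E_{d+1}(R)}e_1$; pushing this relation forward along $R\ra A$ gives $v\sim_{E_{d+1}(A)}e_1$.

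For case $(ii)$, $A=R[T,f_1/f,\dots,f_r/f]$ with $f,f_1,\dots,f_r$ a regular sequence in $R[T]$, the plan is to reduce to case $(i)$ by patching $A$ along $f$: one has $A_f=R[T,1/f]$, where transitivity holds by case $(i)$, while the regular-sequence hypothesis guarantees that $f$ is a nonzerodivisor on $A$ and that $A/fA$ is a complete intersection of dimension $\leq d$ over a finite extension of $R$ — so $\Um_{d+1}(A/fA)$ is again a single orbit by Suslin's theorem, and the depth/Koszul conditions needed to glue elementary orbits along $f$ are satisfied. Alternatively one inducts on $r$, writing $A=B[f_r/f]$ with $B=R[T,f_1/f,\dots,f_{r-1}/f]$ and repeating the $f$-patching at each step. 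The main obstacle throughout is precisely this monic-inversion/patching step — transporting a unimodular row from $R[T,1/f]$ (resp.\ from $A_f$) back onto $R[T]$ (resp.\ onto $A$) while keeping track of elementary orbits. The monicity of $f$, and in case $(ii)$ the regular-sequence hypothesis, are exactly what make the relevant localization close enough to the polynomial ring for this to go through; and since the dimension count leaves no slack, every reduction must be carried out with the sharp transitivity input supplied by Suslin's theorem over the algebraically closed field.
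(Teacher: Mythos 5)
Your overall strategy---reduce to transitivity on $\Um_{d+1}(A)$ and then ``transport'' the row down to $R$ using the monicity of $f$---founders at its central step, which you describe only as ``a Horrocks/Lindel-type argument valid for localizations at a monic polynomial'' producing a row with entries in $R$. No such result is available in this range: $A=R[T,1/f]$ has dimension $d+1$ and the row has length $d+1$, i.e.\ the associated projective module has rank $d=\dim A-1$, which is exactly one below the hypotheses of Lindel's theorem (\ref{lin}) and of Bass (\ref{bass}); and the affine Horrocks theorem (\ref{monic}) is a statement about freeness of projective $R[T]$-modules that become free after inverting a monic, not a device for moving unimodular rows of non-stable length into the base ring. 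Asserting that Wiemers' theorem (\ref{w002}) ``feeds into'' an unspecified patching argument does not close this gap; the patching you need is precisely the hard content of the proposition.

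The idea that actually makes the paper's proof of (\ref{1.3}) work, and which is absent from your proposal, is the passage to the generic fibre over $k[f]$: since $f$ is monic, $A_{1+fk[f]}$ is an affine algebra of dimension $d$ over the field $k(f)$, which is $C_1$ by Tsen's theorem, so Suslin's cancellation over $C_1$-fields (\ref{c1}) applies one dimension down and shows $P_h$ is free for some $h\in 1+fk[f]$. One then patches $P$ with $(R[T]_h)^d$ (Murthy's lemma) to descend $P$ to a projective $R[T]$-module $Q$ with $Q_h$ free, and since $h$ is monic, (\ref{monic}) gives $Q$ free. In case $(ii)$ the descent from the blowup algebra $R[T,f_1/f,\ldots,f_r/f]$ to $R[T]$ is not a naive Koszul/depth gluing along $f$ as you suggest---your claim that one can ``glue elementary orbits along $f$'' is again an unproved Quillen-type local-global statement for a non-polynomial localization---but is supplied by the prior result (\ref{kes3.5}), after which one finishes with (\ref{monic}) and the cancellativity of $R[T]^d$ from (\ref{w002}). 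Without the $C_1$ generic-fibre trick in $(i)$ and the extension theorem (\ref{kes3.5}) in $(ii)$, your outline does not constitute a proof.
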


\begin{theorem}
Let $R$ be an affine algebra of dimension $d$ over an algebraically
closed field $k$ with $1/d! \in k$. Let $P$ be a projective
$R[X,X^{-1}]$-module of rank $d$. Then 

$(i)$ $P$ is cancellative and 

$(ii)$ the natural map $\Aut(P) \ra \Aut(P/(X-1)P)$ is surjective.
\end{theorem}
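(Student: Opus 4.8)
The plan is to reformulate cancellation as a transitivity statement for automorphisms on unimodular elements, to settle it on the fibre $X=1$ via Suslin's theorem over $R$, and then to propagate it across $A:=R[X,X^{-1}]$ using the Laurent structure together with Wiemers' theorem over $R[X]$ and the hypothesis $1/d!\in k$. Note that $\dim A=d+1$, that $A=R[X]_X$ is a localization of the polynomial ring $R[X]$, and that $P_0:=P/(X-1)P$ is a projective $R$-module of rank $d=\dim R$. Recall the standard reformulation: $P$ is cancellative iff $\Aut(P\op A)$ acts transitively on $\Um(P\op A)$ --- given $P\op A\iso Q\op A$, the image of the last coordinate is a unimodular $v$ whose complement $vA$ is free (a stably free rank-one module over $A$ has trivial determinant, hence is free) and with $(P\op A)/vA\iso Q$, so $P\iso Q$ exactly when an automorphism carries $v$ to $(0,1)$. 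Thus for $(i)$ it suffices to move an arbitrary $v\in\Um(P\op A)$ to $(0,1)$, and in fact one aims to do this with the subgroup generated by transvections.

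Specialize $v$ at $X=1$ to $\bar v\in\Um(P_0\op R)$. Since $R$ is an affine algebra of dimension $d$ over the algebraically closed field $k$ and $\operatorname{rank}P_0=d$, part $(i)$ of (\ref{z1}) (Suslin) shows $P_0$ is cancellative over $R$; concretely, the transvection subgroup of $\Aut(P_0\op R)$ acts transitively on $\Um(P_0\op R)$. Each transvection of $P_0\op R$ is given by an element of $P_0$ or a homomorphism $P_0\ra R$, and such data lift along $A\surj R$ because $P$ is projective; lifting the product carrying $\bar v$ to $(0,1)$ and applying it to $v$, we may assume $v\equiv(0,1)\pmod{X-1}$, say $v=(p,u)$ with $p\in(X-1)P$ and $u\in 1+(X-1)A$.

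It remains to move such a $v$ to $(0,1)$ by transvections of $P\op A$. Here one bootstraps from Wiemers' cancellation theorem (\ref{w002}) over the polynomial ring $R[X]$ (which handles the situation with $X$ not inverted) and from Suslin's divisibility lemmas on the completion of unimodular rows --- this is where $1/d!\in k$ is used --- to a Quillen-type local--global principle for the transvection action over the Laurent ring; the local--global step reduces transitivity to the localizations $R_{\mm}[X,X^{-1}]$ at maximal ideals of $R$, where the smaller dimension makes a direct argument possible, and patching the local solutions yields the required automorphism of $P\op A$. This proves $(i)$. Part $(ii)$ follows by applying the same isotopy-and-patching method (in the spirit of Lindel and Bhatwadekar--Roy) to $P$ in place of $P\op A$: a given $\bar\varphi\in\Aut(P/(X-1)P)$ is joined over $A[S]$ to a lift through a family of automorphisms whose localizations are controlled by Wiemers over $R[X]$ and by $1/d!\in k$.

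The hard part is the third paragraph. The obvious deformation contracting a section of $P\op A$ to its value at $X=1$ lives over $R[X]$ rather than over $R[X,X^{-1}]$, and the unit $X$ obstructs the usual Quillen patching; the real content is the local--global principle valid over the Laurent ring, which is precisely where Wiemers' theorem over $R[X]$ and the hypothesis $1/d!\in k$ enter. The remaining ingredients --- the reformulation, the specialization at $X=1$, Suslin's input over $R$, and the lifting of transvections --- are routine.
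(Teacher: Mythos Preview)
Your first two paragraphs are fine as routine setup, but the third paragraph is not a proof: it is a list of techniques with no indication of how they combine, and several of the suggested reductions do not work as stated. Localizing at maximal ideals $\mm$ of $R$ gives $R_\mm[X,X^{-1}]$ with $\dim R_\mm$ possibly equal to $d$, so no useful drop in dimension occurs; Wiemers' Theorem~(\ref{w002}) concerns cancellation of a module, not transitivity on a specific unimodular element congruent to $(0,1)$ modulo $(X-1)$; and you never say what role Suslin's factorial lemma plays. You yourself identify this paragraph as ``the hard part,'' and indeed all of the content of the theorem lies here---what you have written does not supply it.

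The paper's argument is structurally different and avoids the ideal $(X-1)$ entirely in part~$(i)$. One first chooses, via Lindel's Lemma~(\ref{3.1}), a non-zero-divisor $s\in R$ such that $P_s$ is free together with the auxiliary data $p_i,\phi_i$. The crucial construction is the fibre product $B=A[T]/(T^2-s^2T)$ of two copies of $A$ over $A/s^2A$; the point is that $B=B_1[X,X^{-1}]$ where $B_1=R[T]/(T^2-s^2T)$ is again an affine $k$-algebra of dimension $d$, so Proposition~(\ref{1.3}) applies to give that $B^d$ is cancellative. Lemma~(\ref{q3}) then converts this, via Wiemers' homomorphism $\Phi$ of Proposition~(\ref{w001}), into transitivity of $\Aut(A\op P)$ on $\Um^1(A\op P,s^2A)$. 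Finally, since $s\in R$ is a non-zero-divisor, $\dim R/s^2R<d$ and Lindel's Theorem~(\ref{lin}) gives transvection-transitivity over $A/s^2A$; lifting by (\ref{trans}) moves any unimodular element into $\Um^1(A\op P,s^2A)$. Part~$(ii)$ is then deduced from~(\ref{z10}), whose proof uses the same fibre-product machinery together with (\ref{3.2}). The hypothesis $1/d!\in k$ enters only inside the proof of Proposition~(\ref{1.3}), through Suslin's $C_1$-field result~(\ref{c1}) applied to $A_{1+fk[f]}$.
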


We also prove the analogue of above results for affine algebras over
real closed fields (\ref{z6}, \ref{2.2}, \ref{free1},
\ref{free2}). Note that $(ii)$ extends our earlier result
(\cite{kes05}, Theorem 4.7), where it is proved for projective
$A$-modules which are extended from $R$ under the assumption that $R$
is smooth.

\begin{theorem}
Let $k$ be a real closed field and let $R$ be an affine $k$-algebra of
dimension $d-1 \geq 2$. Assume that $f(T)\in R[T]$ is a monic polynomial which
does not belong to any real maximal ideal. Then the following holds:

$(i)$ If $A=R[T,1/f]$, then every projective $A$-module of rank $d$ is
cancellative.

$(ii)$ If $A=R[T,f_1/f,\ldots,f_r/f]$, where $f,f_1,\ldots,f_r$ is a
$R[T]$-regular sequence, then every projective $A$-module of rank $d$
with trivial determinant is cancellative.

$(iii)$ Further, if $R=B[X]$, then $A^{d-1}$ is also cancellative in $(i, ii)$.
\end{theorem}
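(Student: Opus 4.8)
The plan is to treat the three items in parallel, reducing each to the corresponding "ring of dimension $d$" statement over a suitable base by exploiting the same mechanism that drives Theorem \ref{m1}: a projective $A$-module $P$ of rank $d$ fails to be cancellative precisely when some unimodular row on $P \oplus A$ cannot be completed, and the obstruction lives in an orbit space $\Um_{d+1}(P \oplus A)/E$ which one can kill by passing through finite extensions. So first I would set $A' = R[T]$, so that $A = A'_f$ in case $(i)$ and $A = A'[f_1/f, \dots, f_r/f]$ in case $(ii)$; in both cases $A'$ is an affine algebra of dimension $d$ over the real closed field $k$, hence $A'$ (being $B[X]$ when $R = B[X]$, or more generally finitely generated over $k$) has good cancellation behaviour for its free modules by the real-closed-field analogue of Suslin's and Bhatwadekar's theorems referenced in the paragraph after Theorem \ref{z1}. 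The monic polynomial $f$ not lying in any real maximal ideal is exactly the condition that ensures the relevant localisations of $A'$ at real points behave like complex points, so that the local contributions to the obstruction vanish.

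For item $(i)$, the key step is to run the argument of Theorem \ref{m1} but with the ambient ring $R[T, 1/f]$: given $P \oplus A \iso Q \oplus A$ with $\mathrm{rk}\, P = d = \dim A$, I would find a finite extension — concretely, adjoin to $R[T]$ the roots needed to make $f$ split, or more cheaply use that $R[T,1/f]$ is itself essentially of finite type so that the finite extensions $R'$ of $R$ one needs still satisfy $1/d! \in R'$ is automatic and $R'[T,1/f]$ has cancellative free modules of the right rank by the known real case — and then descend. The monicity of $f$ is what keeps $\dim R[T,1/f] = d$ rather than $d+1$; without it the localisation could have the wrong dimension and the whole reduction collapses. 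Here the determinant of $P$ need not be trivial because, as in Bhatwadekar's improvement over Suslin, once free modules are cancellative one upgrades to arbitrary projectives of top rank via the Mennicke-symbol/$\Um$ argument that does not see the determinant.

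For item $(ii)$, where $A = R[T, f_1/f, \dots, f_r/f]$ with $f, f_1, \dots, f_r$ an $R[T]$-regular sequence, the open set $\Spec A$ is the complement in $\Spec R[T]$ of the closed set $V(f)$ blown up along $V(f, f_1, \dots, f_r)$, and the regular-sequence hypothesis guarantees that this complement still has dimension $d$ and, crucially, that $\mathrm{Pic}$ and the relevant $K$-theory behave controllably. The restriction to \emph{trivial determinant} in $(ii)$ is forced because in the blow-up-type construction one loses control of $\mathrm{Pic}(A)$ relative to $\mathrm{Pic}(R[T,1/f])$, so the determinant-free part of the argument survives but the full statement does not; I would mirror Bhatwadekar's Proposition 3.7 (quoted in the excerpt for $d=2$) in spirit, reducing a rank-$d$ projective with trivial determinant to a free module plus a correction that the regular-sequence condition annihilates.

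For item $(iii)$, under the extra hypothesis $R = B[X]$ the ring $A'= R[T] = B[X,T]$ is a polynomial extension, and now Wiemers' theorem (\ref{w002}) — cancellation for projective modules of top rank over $R[X]$ when $1/(d-1)! \in R$, here in its real-closed-field form — applies to give cancellation of $(A')^{d-1}$, hence of $A^{d-1}$ after the same localisation/descent step as in $(i)$ and $(ii)$. The main obstacle throughout will be item $(ii)$: verifying that the regular-sequence condition is exactly strong enough to make the orbit-space obstruction vanish for trivial-determinant projectives while honestly acknowledging (via a potential counterexample in the style of Bhatwadekar's Example 3.11) that one cannot drop "trivial determinant" there — in other words, pinning down precisely where $\mathrm{Pic}(A)$ enters and showing it is the only obstruction — is the delicate part; items $(i)$ and $(iii)$ are comparatively formal consequences of the already-established affine and polynomial real-closed-field results combined with the finite-extension trick of Theorem \ref{m1}.
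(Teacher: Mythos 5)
Your proposal correctly identifies the general shape of the argument (reduce to cancellativity of free modules over a finite extension of $A$), but it has two genuine gaps. First, you never actually prove the input statement that $A^d$ is cancellative, nor that $C^d$ is cancellative for the finite extension $C$ you would need; you merely appeal to ``the real-closed-field analogue of Suslin's and Bhatwadekar's theorems,'' which does not exist (over real closed fields $A^d$ is typically \emph{not} cancellative --- that is the whole point of the hypothesis on $f$). The paper's Proposition \ref{z5} supplies this input, and it is not formal: one uses Ojanguren--Parimala (case $(i)$) or Theorem \ref{kes4.4} (case $(ii)$) to show a stably free $A$-module of rank $d$ is extended from $R[T]$, then the Quillen--Suslin monic inversion (\ref{monic}) and Plumstead's theorem to conclude it is free. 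None of these steps appear in your sketch, and ``adjoining roots of $f$'' or a generic descent through finite extensions will not produce them. Second, you cannot invoke Theorem \ref{m1} as stated, because that theorem needs $R^d$ cancellative for \emph{every} finite extension $R$ of $A$, and here one only knows it for extensions of the special form $A[X]/(X^2-s^2X)\iso B[T,1/f]$ (resp.\ $B[T,f_1/f,\ldots,f_r/f]$) with $B=R[X]/(X^2-s^2X)$. Making the Lindel element $s$ of Lemma \ref{3.1} lie in $R$ (not just in $A$) is exactly what forces $C$ to have this shape, and it is here --- via Lemmas \ref{z2} and \ref{z3}, i.e.\ freeness of $S^{-1}P$ over the one-dimensional ring $S^{-1}A$ by Serre's theorem --- that the trivial-determinant hypothesis in $(ii)$ enters. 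Your explanation of that hypothesis in terms of a $\mathrm{Pic}$ obstruction in the cancellation step is a misdiagnosis.

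For $(iii)$ the route through Wiemers' Theorem \ref{w002} does not fit: with $R=B[X]$ one needs stably free modules of rank $d-1$ over $B[X,T,1/f]$, i.e.\ of rank one below the dimension, to be free. The paper instead combines $(i)$/$(ii)$ (to make $P\op A$ free), Theorem \ref{kes4.4} (to descend $P$ to $B[X,T]$), the monic inversion (\ref{monic}), and Ravi Rao's cancellation theorem for projective modules of rank $>\dim B$ over polynomial rings. You would need to supply these steps explicitly; as written, items $(i)$--$(iii)$ all rest on unproved cancellation inputs.
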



\section{Preliminaries}

Let $B$ be a ring and let $P$ be a projective $B$-module. Recall that
$p\in P$ is called a {\it unimodular element} if there exists a $\psi
\in P^*=\Hom_B(P,B)$ such that $\psi(p)=1$. We denote by $\Um(P)$, the
set of all unimodular elements of $P$. We write $O(p)$ for the ideal
of $B$ generated by $\psi(p)$, for all $\psi \in P^*$. Note that, if
$p\in \Um(P)$, then $O(p)=B$. For an ideal $J\subset B$, we denote by
$\Um^1(B\op P,J)$, the set of all $(a,p)\in\Um(B\op P)$ such that
$a\in 1+J$ and by $\Um(B\op P,J)$, the set of all $(a,p)\in\Um^1(B\op
P,J)$ such that $p\in JP$.
  
Given an element $\gf\in P^\ast$ and an element $p\in P$, we define an
endomorphism $\gf_p$ of $P$ as the composite $P\by \gf B\by p P$.  If
$\gf(p)=0$, then ${\gf_p}^2=0$ and hence $1+\gf_p$ is a uni-potent
automorphism of $P$.
By a {\it transvection}, we mean an automorphism of $P$ of the form
$1+\gf_p$, where $\gf(p)=0$ and either $\gf \in \Um(P^\ast)$
or $p \in\Um(P)$. We denote by $E(P)$, the subgroup of
$\Aut(P)$ generated by all transvections of $P$. Note that $E(P)$ is a
normal subgroup of $\Aut(P)$. 

An existence of a transvection of $P$ pre-supposes that $P$ has a
unimodular element. Let $P = B\op Q$, $q\in Q, \alpha\in
Q^*$. Then the automorphisms $\Delta_q$ and $\Gamma_\ga$ of $P$ defined
by $\Delta_q(b,q')=(b,q'+bq)$ and
$\Gamma_\alpha(b,q')=(b+\alpha(q'),q')$ are transvections of
$P$. Conversely, any transvection $\Theta$ of $P$ gives rise to a
decomposition  $P=B\op Q$ in such a way that $\Theta = \Delta_q$ or
$\Theta = \Gamma_\alpha$.

For an ideal $J\subset B$, we denote by $EL^1(B\op P,J)$,
the subgroup of $E(B\op P)$ generated by
$\Delta_q$ and $\Gamma_{a\phi}$, where $q\in P, a\in J, \phi\in P^*$.
\medskip

We begin by stating two classical results due to Serre \cite{serre} and
Bass \cite{Bas}.

\begin{theorem}\label{serre1}
Let $A$ be a ring of dimension $d$. Then any projective $A$-module of
rank $>d$ has a unimodular element. In particular, if $\dim A=1$, then
any projective $A$-module of trivial determinant is free.
\end{theorem}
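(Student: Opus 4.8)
\noindent\emph{Proof strategy.}

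The plan is to prove the first assertion --- that $P$ has a unimodular element --- by producing $p\in P$ with order ideal $O(p)=A$. Geometrically, $p$ is a section of the rank-$n$ vector bundle attached to $P$ meeting the zero section nowhere, and the hypothesis $n=\mathrm{rank}\,P>d=\dim A$ is exactly the transversality condition under which such a section should exist over a base of dimension $d$ (Serre's original argument is this obstruction-theoretic heuristic; what follows is its algebraic incarnation). I would argue by induction on $d$. Passing to the finitely many connected components of $\Spec A$ we may take $n$ constant, and since $O(p)=A$ can be tested modulo $\mathrm{nil}(A)$, a unimodular element of $P/\mathrm{nil}(A)P$ lifts to one of $P$; so we may assume $A$ reduced. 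If $d=0$, then $A$ is a finite product of fields, $P$ is free of rank $n\ge 1$, and a basis vector is unimodular.

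For $d\ge 1$, let $\mathfrak p_1,\dots,\mathfrak p_m$ be the minimal primes of the reduced ring $A$ and put $S=A\setminus\bigcup_i\mathfrak p_i$; then $S^{-1}A$ is a finite product of fields, $S^{-1}P$ is free of rank $n\ge 1$ over each factor, and clearing denominators in a unimodular element of $S^{-1}P$ gives $p_0\in P$, $\psi_0\in P^{*}$ with $\psi_0(p_0)\in S$. Hence $O(p_0)\not\subseteq\mathfrak p_i$ for every $i$, so the closed set $Y_0:=V(O(p_0))$ contains no minimal prime of $A$ and $\dim(A/O(p_0))\le d-1$. Now $P/O(p_0)P$ is projective over $A/O(p_0)$ of rank $n>d>\dim(A/O(p_0))$, so by the inductive hypothesis it has a unimodular element, induced by some $q\in P$; lifting the corresponding functional through the surjection $\Hom_A(P,A)\twoheadrightarrow\Hom_{A/O(p_0)}(P/O(p_0)P,A/O(p_0))$ (surjective as $P$ is projective) gives $\psi\in P^{*}$ with $\psi(q)\equiv 1\pmod{O(p_0)}$, i.e. $O(q)+O(p_0)=A$; in particular $q$ is basic at every prime of $Y_0$. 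The remaining, and genuine, step is an \emph{addition lemma}: for a suitably chosen $a\in A$ the element $p:=p_0+aq$ has $O(p)=A$. The idea is that on $Y_0$ the element $p_0+aq$ fails to be basic exactly on $Y_0\cap V(a)$ (there $p_0$ vanishes fibrewise while $q$ does not), while on the open complement $\{p_0\text{ basic}\}$ a prime-avoidance choice of $a$ keeps $p_0+aq$ basic away from a closed set of dimension $<\dim Y_0$; hence $p_0+aq$ has a strictly lower-dimensional bad locus, and since $q$ is itself furnished by the inductive hypothesis one iterates, reaching the empty bad locus after at most $d$ steps. I expect this addition/prime-avoidance lemma --- and the bookkeeping of which primes and residues to avoid at each stage --- to be the only real obstacle; the rest is formal.

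The ``in particular'' then follows by peeling off free summands. Suppose $\dim A=1$ and $P$ is projective of rank $n$ with $\wedge^{n}P\cong A$. For $n\le 1$ this is immediate ($P=0$ if $n=0$, and $P=\wedge^{1}P\cong A$ if $n=1$). If $n\ge 2$, then $n>1=\dim A$, so by the first part $P\cong A\op P'$ with $P'$ projective of rank $n-1$; since $P'$ has rank $n-1$ one computes $\wedge^{n-1}P'\cong\wedge^{n}P\cong A$. Iterating brings us to a rank-one projective with trivial determinant, which is isomorphic to its own determinant and hence free; therefore $P\cong A^{n}$.
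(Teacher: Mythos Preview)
The paper does not supply a proof of this statement at all: Theorem~\ref{serre1} is stated in the Preliminaries as a classical result and attributed to Serre \cite{serre}, with no argument given. So there is no ``paper's own proof'' to compare against; your proposal is a proof where the paper simply cites one.

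On its own merits, your strategy is the right shape for Serre's splitting theorem and your derivation of the ``in particular'' clause (peeling off free summands and tracking the determinant down to rank one) is clean and correct. The one place I would tighten is the inductive/addition step. As written, you first use the inductive hypothesis to produce $q$ unimodular modulo $O(p_0)$, and then say the iteration proceeds ``since $q$ is itself furnished by the inductive hypothesis''; but the subsequent iteration is not another call to the inductive hypothesis --- it is a prime-avoidance modification of the pair $(p_0,q)$ that shrinks the bad locus one dimension at a time. The standard way to package this is the basic-element machinery (Eisenbud--Evans): show that if $(p_0,1)\in P\oplus A$ is basic at every prime and $\mathrm{rank}\,P>d$, then some $p_0+q$ is basic at every prime, the proof being a finite prime-avoidance over the maximal points of the bad locus. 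Your sketch gestures at exactly this, and you correctly flag it as the only genuine obstacle; just be aware that the bookkeeping is a single descending-dimension argument on the bad locus, not a nested induction invoking the theorem on smaller rings.
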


\begin{theorem}\label{bass}
Let $A$ be a ring of dimension $d$ and let $P$ be a projective
$A$-module of rank $>d$. Then $E(A\op P)$ acts transitively on
$\Um(A\op P)$. In particular, $P$ is cancellative.
\end{theorem}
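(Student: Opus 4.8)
The plan is to deduce cancellativity from the transitivity statement, and then to prove transitivity by a moving-lemma argument. For the first step, suppose $P\op A^m\iso Q\op A^m$; by induction on $m$ it is enough to treat $m=1$, i.e. $A\op P\iso A\op Q$, and to observe that $\mathrm{rank}\,Q=\mathrm{rank}\,P>d$. The standard unimodular element $e=(1,0)\in A\op P$ satisfies $(A\op P)/Ae\iso P$; under the given isomorphism $e$ goes to some $u\in\Um(A\op Q)$ with $(A\op Q)/Au\iso P$. Granting that $E(A\op Q)$ acts transitively on $\Um(A\op Q)$ (which applies, since $\mathrm{rank}\,Q>d$), the element $u$ lies in the $\Aut(A\op Q)$-orbit of $(1,0)$, so $P\iso(A\op Q)/Au\iso(A\op Q)/A(1,0)\iso Q$. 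Hence the whole theorem follows from the transitivity assertion.

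For the transitivity assertion I would reduce to the \emph{moving lemma}: for every $(a,p)\in\Um(A\op P)$ there is $q\in P$ with $p+aq\in\Um(P)$. Indeed, granting this, the transvection $\Delta_q$ sends $(a,p)$ to $(a,p')$ with $p'=p+aq\in\Um(P)$; choosing $\alpha\in P^\ast$ with $\alpha(p')=1-a$, the transvection $\Gamma_\alpha$ sends $(a,p')$ to $(1,p')$; and $\Delta_{-p'}$ sends $(1,p')$ to $(1,0)$. Thus every element of $\Um(A\op P)$ is carried to $(1,0)$ by an element of $E(A\op P)$, which is exactly transitivity.

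It remains to prove the moving lemma. Write $O(p)\subseteq A$ for the ideal generated by $\{\phi(p):\phi\in P^\ast\}$; the hypothesis $(a,p)\in\Um(A\op P)$ means $aA+O(p)=A$, i.e. $V(O(p))\cap V(a)=\emptyset$ in $\Spec A$, and the goal is to find $q$ with $O(p+aq)=A$, i.e. $V(O(p+aq))=\emptyset$. Since $p+aq\equiv p$ modulo $a$, the locus $V(O(p+aq))$ automatically avoids $V(a)$, so one only has to clear the ``non-proper'' part $\Spec A\setminus V(a)$, of dimension $\le d<\mathrm{rank}\,P$. I would argue by Noetherian induction on $\dim V(O(p))$: pick the primes $\p$ that are minimal in $V(O(p))$ of top dimension (none contains $a$), and use the fact that each localization $P_\p$ is free of rank $>d\ge\dim A/\p$, together with a prime-avoidance argument in the vector spaces $P/\p P$, to choose $q$ so that $p+aq\notin\p P_\p$ for each such $\p$ and no new component of that dimension is created; this strictly lowers $\dim V(O(p+aq))$, and iterating empties the locus. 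This moving lemma is a special case of the Eisenbud--Evans theorem on basic elements (refining Serre's theorem \ref{serre1}), and can also be extracted from Bass' stable range bound $\mathrm{sr}(A)\le d+1$ by the usual local--global patching. The main obstacle is precisely this lemma---the control of the perturbation over the non-proper locus $D(a)$, where the hypothesis on the rank is essential---while the two reductions above are formal.
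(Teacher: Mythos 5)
First, note that the paper itself gives no proof of this statement: it is quoted as a classical theorem of Bass \cite{Bas}, so there is no internal argument to compare yours against. Your two formal reductions are correct and standard: cancelling one free summand at a time reduces cancellation to the transitivity assertion via $P\cong(A\oplus Q)/Au$, and the chain $\Delta_q$, $\Gamma_{(1-a)\psi}$, $\Delta_{-p'}$ (with $\psi(p')=1$) does carry $(a,p)$ to $(1,0)$ once the moving lemma is granted. You have also correctly isolated the moving lemma as the entire content of the theorem.

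The gap is in your proof of the moving lemma, at exactly the clause you slip into the induction: ``and no new component of that dimension is created.'' Passing from $p$ to $p+aq$ does not enlarge the order ideal: a prime $\mathfrak{q}$ with $O(p+aq)\subseteq\mathfrak{q}$ but $O(p)\not\subseteq\mathfrak{q}$ can perfectly well appear, and nothing in a choice of $q$ made only by looking at the finitely many top-dimensional minimal primes of $V(O(p))$ prevents such a $\mathfrak{q}$ from having dimension $\geq\dim V(O(p))$. So the quantity you induct on need not drop, and the induction does not close. The classical proofs avoid this by inducting on something that is monotone by construction: in Bass's stable-range argument one modifies the coordinates one at a time and tracks the ideal generated by the coordinates already processed, which only grows (so its zero set only shrinks) and whose height rises by one at each of the $>d$ steps; in the Eisenbud--Evans basic-element argument one orders a finite list of primes by dimension and takes each successive correction inside $\mathfrak{p}_1\cdots\mathfrak{p}_{r-1}P$ so that basicness at the earlier primes is undisturbed. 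Carrying out that bookkeeping is the real content of the theorem, and it is asserted rather than supplied here. Your fallback citations do not rescue the sketch cheaply: the Eisenbud--Evans theorem is at least as deep as the statement being proved, and ``extracting'' the projective moving lemma from $\mathrm{sr}(A)\leq d+1$ by ``the usual local--global patching'' is itself a nontrivial reduction --- essentially the role played by Lindel's Lemma (\ref{3.1}) and arguments like Proposition (\ref{done}) in this paper. So: right skeleton, correct formal reductions, but the load-bearing lemma is not proved.
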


The following two results are due to Lindel (\cite{lindel}, Theorem
2.6 and Lemma 1.1).

\begin{theorem}\label{lin}
Let $A$ be a ring of dimension $d$ and $R=A[X_1,\ldots,X_n,Y_1^{\pm
1},\ldots, Y_m^{\pm 1}]$. Let $P$ be a projective $R$-module of rank
$\geq$ max $(2,d+1)$. Then $E(R\op P)$ acts transitively on $\Um(R\op
P)$. In particular, projective $R$-modules of rank $>d$ are
cancellative.
\end{theorem}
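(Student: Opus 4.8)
\medskip\noindent\emph{Sketch of a proof.} The plan is to reduce to Bass' theorem~\ref{bass}, which is exactly the case $n=m=0$. Since $E(R\op P)$ is a group and $(1,0)\in\Um(R\op P)$, transitivity on $\Um(R\op P)$ amounts to the assertion that every $(a,p)\in\Um(R\op P)$ lies in the orbit $E(R\op P)\cdot(1,0)$; so fix such an $(a,p)$ and look for $\tau\in E(R\op P)$ with $\tau(a,p)=(1,0)$. The one step that really uses the structure of $R$ is an Eisenbud--Evans-type moving lemma: because the rank of $P$ exceeds $\dim A$ (it is $\ge d+1$), there should be $q\in P$ with $p+aq\in\Um(P)$. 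Over an arbitrary Noetherian ring this is the classical sharpening of Serre's theorem, valid once the rank exceeds the Krull dimension; what is needed here is the improvement --- due to Plumstead for $A[X_1,\ldots,X_n]$, and extended to the Laurent variables by Bhatwadekar--Roy and by Lindel --- in which, for a projective $R$-module, one may replace $\dim R=d+n+m$ by $\dim A$. (For $d\le1$ the requirement that the rank be $\ge2$ absorbs the low-dimensional exceptions.)

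\medskip\noindent Granting this moving lemma, the rest is the elementary reduction already used to prove~\ref{bass}. Applying the formalism of the Preliminaries to $R\op P$ (with $B=R$ and $Q=P$ there), the transvection $\Delta_q$ sends $(a,p)$ to $(a,\,p+aq)$, so we may assume $p\in\Um(P)$; choose $\psi\in P^\ast$ with $\psi(p)=1$. Then $\Gamma_{-a\psi}$ sends $(a,p)$ to $(0,p)$, while $\Gamma_{-\psi}\,\Delta_p$ sends $(1,0)$ to $(0,p)$; composing, $(a,p)$ and $(1,0)$ lie in one $E(R\op P)$-orbit, and transitivity follows. For the ``in particular'', given an isomorphism $f:Q\op R\iso R\op P$ (after rewriting $P\op R^m\iso Q\op R^m$ and inducting on $m$), the element $f(0,1)$ is unimodular; composing $f$ with an elementary automorphism of $R\op P$ carrying $f(0,1)$ to $(1,0)$ yields an isomorphism taking the summand $0\op R$ onto the summand $R\op 0$, which therefore descends to an isomorphism $Q\iso P$.

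\medskip\noindent The hard part is the moving lemma with the bound $\dim A$ in place of $\dim R$: Serre's theorem~\ref{serre1} gives only the weaker bound $\dim R$, which is useless here since $\dim R$ can be far larger than the rank of $P$. The constant case ($R=A$) sits inside~\ref{bass}; handling the polynomial variables $X_i$ needs Plumstead's generalized-dimension techniques, or else a Quillen-type patching over $\Spec A$ together with a Horrocks-type analysis over the local rings $A_{\mathfrak p}$; and handling a Laurent variable $Y_i$ needs in addition a gluing argument relating $A[\ldots][Y_i,Y_i^{-1}]$ to the charts $A[\ldots][Y_i]$ and $A[\ldots][Y_i^{-1}]$. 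Carrying all of this out is the content of~\cite{lindel}.
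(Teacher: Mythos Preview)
The paper does not prove this statement: it is quoted as a preliminary result due to Lindel (\cite{lindel}, Theorem~2.6), with no argument supplied. There is thus no proof in the paper against which to compare your sketch.

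On the sketch itself: your reduction of transitivity to an Eisenbud--Evans-type moving lemma (find $q\in P$ with $p+aq\in\Um(P)$, then finish with the explicit transvections $\Delta_q$, $\Gamma_{-a\psi}$, $\Gamma_{-\psi}\Delta_{p}$) is correct and entirely standard. But, as you yourself concede at the end, the moving lemma with the bound $\dim A$ in place of $\dim R$ is the whole difficulty, and you defer it back to \cite{lindel} --- the very source the paper already cites for the theorem. So the sketch locates the hard step rather than carrying it out; it is a reading guide to Lindel's paper rather than an independent argument. The ingredients you name (Plumstead's generalized-dimension techniques, Quillen-type patching, gluing over the charts $Y_i$ and $Y_i^{-1}$) are indeed in the right family, and Lindel's structural Lemma~1.1, quoted in the present paper as Lemma~\ref{3.1}, is one of the key tools in that programme.
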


\begin{lemma}\label{3.1}
Let $A$ be a  ring and let $P$ be a projective $A$-module of rank $r$.
Then there exists $s\in A$ such that the following holds:

$(i)$ $P_s$ is free, 

$(ii)$ there exists $p_1,\ldots,p_r\in P$ and
$\phi_1,\ldots,\phi_r\in \Hom(P,A)$ such that
$(\phi_i(p_j))=$ diagonal $(s,\ldots,s)$,

$(iii)$ $sP\subset p_1A+\ldots+ p_rA$,

$(iv)$ the image of $s$ in $A_{red}$ is a non-zero-divisor and

$(v)$ $(0:sA)=(0:s^2A)$.
\end{lemma}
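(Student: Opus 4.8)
The plan is to first find a single element $t\in A$ for which $P_t$ is free of rank $r$ and the image of $t$ in $A_{red}$ is a non-zero-divisor, and then to take $s$ to be a sufficiently high power of $t$, absorbing the extra powers of $t$ into the data $p_i,\phi_i$ so that the matrix $(\phi_i(p_j))$ comes out exactly $\mathrm{diag}(s,\ldots,s)$.

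For the first step I would invert all the non-zero-divisors modulo nilpotents. Let $\p_1,\ldots,\p_k$ be the minimal primes of $A$ (finitely many, as $A$ is Noetherian) and let $S=A\setminus(\p_1\cup\cdots\cup\p_k)$, a multiplicative set; note that $S$ is exactly the set of $a\in A$ whose image in $A_{red}$ is a non-zero-divisor, since a reduced Noetherian ring has no embedded primes and its zero-divisors form the union of its minimal primes. Every prime of $S^{-1}A$ lies over a prime of $A$ contained in $\p_1\cup\cdots\cup\p_k$, hence (by prime avoidance) contained in, and so equal to, some $\p_i$; thus $S^{-1}A$ is a zero-dimensional Noetherian ring, i.e. Artinian, hence a finite product of Artinian local rings. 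Over such a ring a finitely generated projective module of constant rank $r$ is free of rank $r$ (over a local ring finitely generated projective modules are free), so $S^{-1}P\iso(S^{-1}A)^r$. As $A$ is Noetherian, $P$ and $A^r$ are finitely presented, so this isomorphism together with its inverse is already defined over $A_t$ for some common denominator $t\in S$, and after enlarging $t$ inside $S$ so that the two composites equal the identity we obtain $P_t\iso A_t^r$. By the choice of $t$, its image in $A_{red}$ is a non-zero-divisor.

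Now I would clear denominators. Choosing a basis of $P_t$ and its dual basis in $\Hom_{A_t}(P_t,A_t)=\Hom_A(P,A)_t$, writing all of them over a common denominator, and using that an element of $A_t$ which vanishes comes from an element of $A$ killed by a power of $t$, one gets $p_1,\ldots,p_r\in P$, $\phi_1,\ldots,\phi_r\in\Hom_A(P,A)$ and an integer $a\ge 1$ with $(\phi_i(p_j))=\mathrm{diag}(t^a,\ldots,t^a)$ in $A$. The same device applied to a finite generating set of $P$ produces an integer $D$ with $t^DP\subseteq p_1A+\cdots+p_rA$. Finally, the ascending chain $(0:t)\subseteq(0:t^2)\subseteq\cdots$ stabilizes, say $(0:t^{j_0})=(0:t^j)$ for all $j\ge j_0$. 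Set $M=\max(a,D,j_0,1)$, put $s=t^M$, keep the $p_i$, and replace each $\phi_i$ by $t^{M-a}\phi_i$. Then $P_s=P_{t^M}=P_t$ is free, giving $(i)$; $(\phi_i(p_j))=t^{M-a}\,\mathrm{diag}(t^a,\ldots,t^a)=\mathrm{diag}(s,\ldots,s)$, giving $(ii)$; $sP=t^MP\subseteq t^DP\subseteq\sum_ip_iA$, giving $(iii)$; the image of $s=t^M$ in $A_{red}$ is a product of non-zero-divisors, hence a non-zero-divisor, giving $(iv)$; and $(0:sA)=(0:t^MA)=(0:t^{2M}A)=(0:s^2A)$ because $M,2M\ge j_0$, giving $(v)$.

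The only real content is the first step: the observation that inverting the non-zero-divisors turns $A$ into an Artinian ring over which $P$ is forced to be free, and that this single isomorphism then spreads out to a principal localization $A_t$ with $t$ a non-zero-divisor modulo nilpotents. Everything afterwards is routine localization bookkeeping; the one point requiring a little care is to pick one exponent $M$ large enough to serve $(ii)$, $(iii)$ and $(v)$ simultaneously, and to rescale the $\phi_i$ so that the diagonal entries are exactly $s$ and not merely a power of $s$.
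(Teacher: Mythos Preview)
The paper does not supply its own proof of this lemma; it is quoted without proof as Lindel's result (\cite{lindel}, Lemma 1.1). Your argument is correct and self-contained: inverting the complement $S$ of the union of the minimal primes makes $S^{-1}A$ Artinian and $S^{-1}P$ free of rank $r$, this spreads out to a principal localization $A_t$ with $t$ a non-zero-divisor in $A_{red}$, and then a single sufficiently large exponent $M$ together with the rescaling $\phi_i\mapsto t^{M-a}\phi_i$ delivers (i)--(v) for $s=t^M$. The bookkeeping steps (clearing denominators to get $(\phi_i(p_j))=\mathrm{diag}(t^a,\ldots,t^a)$ in $A$ rather than just in $A_t$, getting $t^DP\subseteq\sum p_iA$, and stabilizing the chain $(0:t^j)$) are all handled correctly. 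This is the standard route to Lindel's lemma.
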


The following result is due to Bhatwadekar and Roy (\cite{Bh-Roy},
Proposition 4.1).

\begin{proposition}\label{trans}
Let $A$ be a ring and let $I$ be an ideal of $A$. Let $P$ be a projective
$A$-module. Then any transvection of $P/IP$ can be lifted to an
automorphism of $P$.
\end{proposition}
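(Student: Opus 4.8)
The plan is to lift the data $(\ol\gf,\ol p)$ that defines the given transvection to data $(\gf,p)$ over $A$, correct the lift so that the orthogonality $\gf(p)=0$ holds \emph{exactly} (not merely modulo $I$), and then observe that the resulting square-zero endomorphism $1+\gf_p$ of $P$ is the automorphism we want.

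Write the prescribed transvection of $P/IP$ as $1+\ol\gf_{\ol p}$, where $\ol\gf\in(P/IP)^\ast$ and $\ol p\in P/IP$ satisfy $\ol\gf(\ol p)=0$ and, by definition of a transvection, either $\ol\gf\in\Um((P/IP)^\ast)$ or $\ol p\in\Um(P/IP)$. An $A$-linear map $P\ra A/I$ is the same as an element of $(P/IP)^\ast$, and since $P$ is projective, applying $\Hom_A(P,-)$ to $A\surj A/I$ shows that every such map lifts to an element of $P^\ast$; so $\ol\gf$ lifts to some $\gf\in P^\ast$, and we also lift $\ol p$ to some $p\in P$. The only defect is that a priori $\gf(p)\in I$, but need not vanish.

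First I would remove this defect by a single correction inside the coset of $p$ (or of $\gf$). Suppose $\ol\gf$ is unimodular; choosing $\ol x_0\in P/IP$ with $\ol\gf(\ol x_0)=1$ and a lift $x_0\in P$, we get $\gf(x_0)\in 1+I$, so the ideal $J:=\gf(P)$ is comaximal with $I$. For comaximal ideals $J\cap I=JI$, hence $\gf(p)\in I\cap\gf(P)=I\gf(P)=\gf(IP)$; writing $\gf(p)=\gf(z)$ with $z\in IP$ and replacing $p$ by $p-z$ leaves $\ol p$ unchanged and achieves $\gf(p)=0$. If instead $\ol p$ is unimodular, then the ideal $O(p)$ is comaximal with $I$ (lift some $\ol\psi$ with $\ol\psi(\ol p)=1$), so $\gf(p)\in I\cap O(p)=I\,O(p)=\{\eta(p):\eta\in IP^\ast\}$; writing $\gf(p)=\eta(p)$ with $\eta\in IP^\ast$ and replacing $\gf$ by $\gf-\eta$ leaves $\ol\gf$ unchanged and achieves $\gf(p)=0$.

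In either case we now have $\gf\in P^\ast$ and $p\in P$ with $\gf(p)=0$ and with $(\gf,p)$ reducing to $(\ol\gf,\ol p)$ modulo $I$. Then $\gf_p^2=0$, so $\gt:=1+\gf_p$ is an automorphism of $P$ with inverse $1-\gf_p$, and its reduction modulo $IP$ is exactly $1+\ol\gf_{\ol p}$, as required. Two remarks on where the subtlety sits. One should not expect $\gt$ to be a transvection of $P$ in the strict sense: the lift $\gf$ of a unimodular $\ol\gf$ (resp.\ the lift $p$ of a unimodular $\ol p$) need not be unimodular; but the statement asks only for an automorphism, so this is harmless. The genuine obstacle is the passage from the congruence $\gf(p)\in I$ to the exact equality $\gf(p)=0$, and this is precisely what comaximality buys: $J+I=A$ forces $J\cap I=JI$, with $J=\gf(P)$ in the first case and $J=O(p)$ in the second, which is exactly enough to make the correction in one step.
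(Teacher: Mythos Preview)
The paper does not give its own proof of this proposition; it merely quotes it from Bhatwadekar--Roy (\cite{Bh-Roy}, Proposition~4.1). So there is nothing in the paper to compare your argument against, and the question is simply whether your proof stands on its own.

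It does. Your lifting of $\ol\gf$ to $\gf\in P^\ast$ uses exactly the projectivity of $P$ (exactness of $\Hom_A(P,-)$ applied to $A\surj A/I$, together with the identification $\Hom_A(P,A/I)\cong (P/IP)^\ast$), and your correction step is the heart of the matter. In the case $\ol\gf\in\Um((P/IP)^\ast)$, the chain $\gf(p)\in I\cap\gf(P)=I\cdot\gf(P)=\gf(IP)$ is valid: comaximality gives the first equality, and $I\cdot\gf(P)=\gf(IP)$ is immediate from $A$-linearity. In the case $\ol p\in\Um(P/IP)$, the dual chain $\gf(p)\in I\cap O(p)=I\cdot O(p)=\{\eta(p):\eta\in IP^\ast\}$ is equally clean. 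In both cases the replacement ($p\mapsto p-z$ or $\gf\mapsto\gf-\eta$) leaves the reductions modulo $I$ unchanged and forces $\gf(p)=0$, so $1+\gf_p$ is a genuine (unipotent) automorphism of $P$ lifting the given transvection. Your closing remark that the lift need not itself be a transvection, only an automorphism, is also well placed: that is exactly what the statement asks for and all that the paper ever uses.
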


\begin{define}
For a ring $A$, we say that projective stable range of $A$ is $\leq r$
(notation: psr$(A)\leq r$) if for all projective $A$-modules $P$ of rank
$\geq r$ and $(a,p)\in \Um(A\op P)$, we can find  $q\in P$ such that
$p+aq\in \Um(P)$. Similarly, $A$ has stable range
$\leq r$ (notation: sr$(A)\leq r$) is defined the same way as
psr$(A)$ but with $P$ required to be free. 
\end{define}
\medskip

The following two results are due to Suslin, Vaserstein (\cite{su},
Corollary 17.3) and Mohan Kumar, Murthy, Roy (\cite{mmr}, Theorem 3.7)
respectively.

\begin{theorem}\label{svv}
Let $k\subset \ol\BF_p$ be a field and let $A$ be an affine
$k$-algebra of dimension $d$. Then sr$(A) \leq $ max $(2,d)$. In
particular, $A^d$ is cancellative.
\end{theorem}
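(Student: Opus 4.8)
The plan is to reduce the statement to the case of a \emph{finite} base field, which is the substantial one. Recall first that sr$(A)\leq r$ means exactly that every unimodular row over $A$ of length $\geq r+1$ is reducible: for $(a_0,a_1,\ldots,a_s)\in\Um(A^{s+1})$ with $s\geq r$ there exist $\gl_1,\ldots,\gl_s\in A$ with $(a_1+\gl_1a_0,\ldots,a_s+\gl_sa_0)\in\Um(A^s)$. By the classical Bass stable range estimate sr$(A)\leq\dim A+1$ (cf. \ref{bass}), every unimodular row of length $\geq d+2$ is reducible, and when $d\leq 1$ this already proves the theorem, since then $\max(2,d)=2\geq d+1$. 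So the whole content is the claim $(\star)$: \emph{if $d\geq 2$, then every $(a_0,\ldots,a_d)\in\Um(A^{d+1})$ is reducible}. Granting $(\star)$ we get sr$(A)\leq d$ for $d\geq 2$, and the ``in particular'' assertion then follows from the standard fact (Bass, Vaserstein) that sr$(A)\leq n$ forces every projective $A$-module of rank $\geq n$ to be cancellative; for $d\leq 1$ the cancellativity of $A^d$ is classical, with Serre's theorem \ref{serre1} handling the rank-one case.

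To prove $(\star)$ I would first spread out. Write $A=k[X_1,\ldots,X_n]/(f_1,\ldots,f_m)$. Only finitely many elements of $k$ occur as coefficients of the $f_i$, of the entries of a fixed row $(a_0,\ldots,a_d)\in\Um(A^{d+1})$, and of a chosen relation $\sum a_ib_i=1$; since $k\subseteq\ol\BF_p$, they generate a \emph{finite} subfield $k_0$ of $k$. Hence there are a finitely generated $k_0$-algebra $A_0$ and an isomorphism $A\cong A_0\ot_{k_0}k$ under which the given row is the image of a row in $\Um(A_0^{d+1})$; moreover $A_0\ra A$ is integral (because $k/k_0$ is algebraic), so $\dim A_0=\dim A=d$. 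If $(\star)$ holds for $A_0$, the row is reducible over $A_0$, and since unimodularity is preserved by every ring homomorphism, applying $A_0\ra A$ to the shortened row shows the original row is reducible over $A$. This reduces us to the case $k=\BF_q$ finite.

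The finite-field case is the Suslin--Vaserstein theorem, and carrying out its proof is, I expect, the main obstacle. The difficulty is that Bass' prime-avoidance arguments, applied to a unimodular row of length $d+1$ over a $d$-dimensional ring, only produce --- after modifications $a_j\mapsto a_j+\gl_ja_0$ --- an ideal whose zero locus is at worst a finite set of closed points, rather than empty; this is precisely why Bass' bound is no better than sr$\leq d+1$, and why $(\star)$ genuinely fails over $\BR$ or over $\BC$. Over a finite field these residual closed points can be eliminated, so that the modified row becomes unimodular over $A$, because every residue field $A/\mm$ at a closed point of $A$ is finite: a counting argument over these finite fields yields the required $\gl_j$. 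This last step, which uses the arithmetic of the ground field essentially, is the heart of the matter; the two reductions above are routine.
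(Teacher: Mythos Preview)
The paper does not prove this statement: Theorem~\ref{svv} appears in the Preliminaries section and is simply attributed to Suslin and Vaserstein (\cite{su}, Corollary~17.3), with no argument given. So there is no proof in the paper to compare your proposal against.

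As for your proposal itself: the reduction to $d\geq 2$ and the spreading-out argument down to a finite subfield $k_0\subset k$ are both correct and standard (the integrality of $A_0\hookrightarrow A_0\otimes_{k_0}k$ and the resulting equality of dimensions are fine). But after that reduction you are left with exactly the statement over a finite field, which you yourself identify as ``the Suslin--Vaserstein theorem'' and ``the heart of the matter,'' giving only a heuristic about counting over finite residue fields. Since the general case over $k\subset\ol\BF_p$ reduces immediately to the finite-field case, your proposal does not add anything beyond the citation the paper already makes: you have restated that the substantial work lives in \cite{su} without reproducing it. The intuition you describe---that after Bass-type prime avoidance one is left with finitely many closed points with finite residue fields, and that a counting argument then finishes the job---is the right picture, but it is not a proof; the actual argument in \cite{su} is what is needed, and neither you nor the paper supplies it.
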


\begin{theorem}\label{kumar}
Let $A$ be an affine algebra of dimension $d\geq 2$ over $\ol
\BF_p$. Suppose that $A$ is regular when $d=2$. Then psr $(A) \leq d$.
\end{theorem}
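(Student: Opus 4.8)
The plan is to reduce to the case $\mathrm{rank}\,P=d$ and then to play off what happens modulo a suitable element $s\in A$ against what happens after inverting $s$, using that $A[1/s]$ is again an affine algebra over $\ol\BF_p$. Since passing to $A_{red}$ changes neither the dimension nor the set of unimodular elements, we may assume $A$ reduced. If $\mathrm{rank}\,P>d$, the assertion follows from the classical bound $\mathrm{psr}(A)\le\dim A+1$, which is a consequence of Theorems \ref{serre1} and \ref{bass}; so it suffices to treat $\mathrm{rank}\,P=d$. In this range no genuine drop of dimension below the rank will be available, which is the ultimate source of the regularity hypothesis when $d=2$.

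Fix $(a,p)\in\Um(A\op P)$ with $\mathrm{rank}\,P=d$ and apply Lemma \ref{3.1} to $P$: we obtain $s\in A$, a non-zero-divisor since $A$ is reduced, together with $p_1,\dots,p_d\in P$ and $\phi_1,\dots,\phi_d\in P^\ast$ satisfying $(\phi_i(p_j))=\diag(s,\dots,s)$, $sP\subset p_1A+\dots+p_dA$, and $P_s$ free of rank $d$. Since $\dim(A/sA)\le d-1<\mathrm{rank}(P/sP)$, the classical bound $\mathrm{psr}(A/sA)\le\dim(A/sA)+1$ (see the first paragraph) yields $q_1\in P$ with $\ol p+\ol a\,\ol q_1\in\Um(P/sP)$. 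Replacing $p$ by the elementary translate $p+aq_1$, we may assume $O(p)+sA=A$; in particular $p$ is already unimodular locally at every maximal ideal containing $s$.

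After inverting $s$, the ring $A_s=A[1/s]\cong A[T]/(1-sT)$ is an affine $\ol\BF_p$-algebra of dimension $\le d$, so Theorem \ref{svv} gives $\mathrm{sr}(A_s)\le\max(2,d)=d$; as $P_s$ is free of rank $d$, the image of $(a,p)$ in $\Um(A_s\op P_s)$ can be shortened, i.e.\ there is $q'\in P_s$ with $p+aq'\in\Um(P_s)$. It remains to splice these two pieces of data --- $p$ unimodular at the primes containing $s$, and $p+aq'$ unimodular after inverting $s$ --- into one honest $q\in P$ with $p+aq\in\Um(P)$. This is a local--global argument in the spirit of Bhatwadekar--Roy and Lindel: using the identities $(\phi_i(p_j))=\diag(s,\dots,s)$ and $sP\subset\sum_i p_iA$, the transvection $\Delta_{aq'}$ of $P_s$ is rewritten, after clearing denominators by a high enough power of $s$, as an automorphism of $P$ congruent to the identity modulo $sP$ --- this is where Proposition \ref{trans} and the subgroup $EL^1(A\op P,sA)$ enter --- and this automorphism absorbs the discrepancy between the two local solutions without disturbing unimodularity modulo $sA$. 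One then verifies $p+aq\in\Um(P)$ at every maximal ideal: away from $V(s)$ by the localized solution, and at a maximal ideal containing $s$ by the step modulo $s$.

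The main obstacle is this last splicing step, where the bookkeeping of denominators and of the elementary subgroup must be done with care. A logically separate difficulty is the base case $\mathrm{rank}\,P=d=2$: the reduction modulo $s$ still goes through, but the rank has reached its critical value, so no real dimension shift is possible and purely formal patching no longer suffices; instead one must feed in finer $K$-theoretic information about smooth affine surfaces over $\ol\BF_p$ (triviality or divisibility of the relevant Mennicke-type obstruction, ultimately resting on Suslin's work), and this is exactly where the regularity of $A$ is used.
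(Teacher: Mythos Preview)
This theorem is not proved in the paper at all: it is quoted in the preliminaries as a result of Mohan Kumar, Murthy and Roy \cite{mmr}, Theorem~3.7, and is used later as a black box. So there is no ``paper's own proof'' to compare against; what you have written is an attempted reconstruction of a result the paper only cites.

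As a proof, your proposal is incomplete, and you say so yourself. The two places you flag --- the splicing step and the case $d=2$ --- are not bookkeeping but the entire content of the theorem. For the splicing: from $O(p)+sA=A$ and the existence of some $q'\in P_s$ with $p+aq'\in\Um(P_s)$ you want a single $q\in P$ with $p+aq\in\Um(P)$. But the set $\{q\in P:\ p+aq\in\Um(P)\}$ is merely an open condition on $q$; knowing it is nonempty over $A/sA$ (where $q=0$ works) and over $A_s$ gives no mechanism to produce a global point. Your suggestion to rewrite $\Delta_{aq'}$ via Proposition~\ref{trans} and $EL^1(A\op P,sA)$ does not do this: lifting a transvection of $P_s$ to an automorphism of $P$ congruent to the identity mod $sP$ is exactly the step that fails without further input, and Proposition~\ref{trans} lifts transvections from $P/IP$, not from $P_s$. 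Likewise, for $d=2$ you only gesture at ``finer $K$-theoretic information''; this is where the real arithmetic of $\ol\BF_p$ enters.

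The argument in \cite{mmr} is organized quite differently. Rather than a Zariski patching of two local shortenings, it uses a Bertini-type reduction to curves together with the key arithmetic fact (going back to Weil) that for a smooth affine curve $C$ over $\ol\BF_p$ the group $\mathrm{Pic}(C)$ is torsion; this torsion property is what kills the obstruction to shortening and is also what forces the regularity hypothesis when $d=2$. None of that apparatus is present in your outline, so as it stands the proposal is a plausible-sounding strategy whose hard steps remain to be supplied.
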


The following result is due to Quillen \cite{quillen} and Suslin
\cite{susl}.

\begin{theorem}\label{monic}
Let $A$ be a ring and let $P$ be a projective $A[T]$-module.  Assume
that $P_f$ is free for some monic polynomial $f\in A[T]$.  Then $P$ is
free.
\end{theorem}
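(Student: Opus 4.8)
The plan is to combine Quillen's patching theorem with Horrocks' theorem; this is the classical ``monic inversion'' principle of Quillen and Suslin. We may assume $\deg f \geq 1$, since a monic polynomial of degree $0$ is $1$, in which case $P = P_f$ is free outright. The first goal is to show that $P$ is extended from $A$. Fix a maximal ideal $\mm$ of $A$ and pass to $A_\mm$: the polynomial $f$ stays monic in $A_\mm[T]$, and $(P_\mm)_f \cong (P_f)_\mm$ stays free. By Horrocks' theorem --- a finitely generated projective module over $R[T]$ with $R$ local, which becomes free after inverting a monic polynomial, is free --- the module $P_\mm$ is free over $A_\mm[T]$, in particular extended from $A_\mm$. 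Since $P$ is finitely presented over $A[T]$, Quillen's patching theorem now applies: an $A[T]$-module that is extended from $A_\mm$ at every maximal ideal $\mm$ is extended from $A$. Hence $P \cong P_0 \otimes_A A[T]$ with $P_0 := P/TP$ a finitely generated projective $A$-module.

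It remains to prove that $P_0$ is free, for then $P \cong P_0 \otimes_A A[T]$ is free as well (and conversely freeness of $P$ forces freeness of $P_0$ by reduction modulo $T$, so nothing is lost). Here the hypothesis must be used globally, not just locally --- indeed ``$P_\mm$ free for all $\mm$'' alone would not suffice. Since $f$ is monic of positive degree, the ring map $A \to A[T]_f$ is faithfully flat: it is flat, being the composite of the free extension $A \to A[T]$ with a localization, and faithful because for every prime $\p$ of $A$ the image of $f$ in $\kappa(\p)[T]$ is a nonzero non-unit, so the fibre $A[T]_f \otimes_A \kappa(\p) = \kappa(\p)[T]_{\bar f}$ is nonzero; each such fibre is moreover a nonempty principal open subscheme of the affine line over $\kappa(\p)$, hence connected. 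Using this faithful flatness with connected fibres one descends freeness --- not merely projectivity --- of $P_0$ along $A \to A[T]_f$ from the freeness of $P_0 \otimes_A A[T]_f \cong P_f$. Therefore $P_0$, and hence $P$, is free.

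The main obstacle is Horrocks' theorem itself; the patching step and the concluding descent are comparatively formal, whereas Horrocks' theorem over a local ring carries the real content. Its proof works on $\Spec A_\mm[T]$ and combines Nakayama-type arguments with Suslin's ``monic polynomial'' manipulation of unimodular rows and elementary matrices to spread an isomorphism valid after inverting $f$ to one valid over the whole affine line over $A_\mm$.
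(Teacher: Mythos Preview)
The paper does not supply its own proof of this statement; it is quoted from the literature as a classical result of Quillen and Suslin. So there is nothing to compare against, and I will simply assess your argument on its merits.

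Your reduction to the extended case is correct and standard: local Horrocks gives $P_{\mm}$ free over $A_{\mm}[T]$ for every maximal ideal $\mm$, and Quillen's patching then yields $P\cong P_0\otimes_A A[T]$ with $P_0=P/TP$ finitely generated projective over $A$. The remaining task is exactly the one you identify: show that $P_0\otimes_A A[T]_f$ free forces $P_0$ free.

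Here your argument breaks down. You claim that freeness of a finitely generated projective module descends along any faithfully flat map with connected fibres. That principle is false. For any ring $A$ possessing a non-free finitely generated projective module $P_0$ of constant rank $n$, let $B$ be the coordinate ring of the frame bundle $\underline{\mathrm{Isom}}(A^n,P_0)$, a $\GL_n$-torsor over $\Spec A$. Then $A\to B$ is faithfully flat (Zariski-locally it is $A\to A\otimes\mathcal{O}(\GL_n)$), every fibre is a form of $\GL_{n,\kappa(\p)}$ and hence connected, and $P_0\otimes_A B$ is free by construction of the tautological trivialization---yet $P_0$ is not free. Thus ``faithfully flat with connected fibres'' is not enough, and your descent step is unjustified as written.

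The conclusion you want is nonetheless true, but it uses the specific shape of $A[T]_f$, not an abstract descent principle. One clean way to close the gap: by Gauss's lemma for content ideals, every height-one prime of $A[T]$ that contains a monic polynomial is itself generated by a monic polynomial, hence is principal; feeding this into the localization sequence for Picard (or, for higher rank, repeating the Horrocks--Quillen analysis with $P_0[T]$ versus $A[T]^n$ and using that their difference is supported on $V(f)$, a divisor with principal components) shows that $P_0[T]$ and $A[T]^n$, which agree over $A[T]_f$, already agree over $A[T]$. Setting $T=0$ then gives $P_0\cong A^n$. You should supply an argument along these lines in place of the appeal to connected fibres.
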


The following result is due to Wiemers (\cite{Wiemers}, Theorem 3.2,
Corollary 3.4).

\begin{proposition}\label{3.2}
Let $R$ be a ring of dimension $d$ and $A=R[X_1,\ldots,X_n,Y_1^{\pm
1},\ldots,Y_m^{\pm 1}]$.  Let $P$ be a projective $A$-module of rank
$\geq$ max $(2,d+1)$. Then 

$(i)$ $EL^1(A\op P,Y_m-1)$ acts transitively on $\Um^1(A\op P,Y_m-1)$.

$(ii)$ the natural map $\Aut_A(P) \ra \Aut_{A/(Y_m-1)}(P/(Y_m-1)P)$ is
surjective.
\end{proposition}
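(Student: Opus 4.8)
Write $B=R[X_1,\ldots,X_n,Y_1^{\pm 1},\ldots,Y_{m-1}^{\pm 1}]$, so that $A=B[Y_m,Y_m^{-1}]$ and the specialization $Y_m\mapsto 1$ identifies $A/(Y_m-1)A$ with $B$; set $\ol P=P/(Y_m-1)P$, a projective $B$-module of the same rank $\geq\max(2,d+1)$. The absolute transitivity of $E(A\op P)$ on $\Um(A\op P)$ is already available from Lindel (\ref{lin}); the whole content of $(i)$ is to upgrade this to the \emph{relative} group $EL^1(A\op P,Y_m-1)$ and the relative orbit $\Um^1(A\op P,Y_m-1)$. The first observation is that everything is trivial after specializing at $Y_m=1$: if $(a,p)\in\Um^1(A\op P,Y_m-1)$ then $a\in 1+(Y_m-1)A$ forces $a\mapsto 1$, so the image of $(a,p)$ in $\Um(B\op\ol P)$ is $(1,\ol p)$, and the transvection $\Delta_{-\ol p}$ carries it to $(1,0)$. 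Since $\Delta_q$ reduces modulo $(Y_m-1)$ to $\Delta_{\ol q}$ while every $\Gamma_{c\phi}$ with $c\in(Y_m-1)$ reduces to the identity, this says exactly that $(a,p)$ already lies in the orbit of $(1,0)$ over the fibre $Y_m=1$. Thus $(i)$ is a statement about propagating this equality from the fibre to the total ring along the Laurent direction.

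My plan is to prove $(i)$ by reducing to the case in which $R$ is local and then treating the single distinguished variable $Y_m$ directly. The reduction rests on a Quillen--Suslin-type local--global principle for the relative elementary action. Concretely, for each maximal ideal $\mathfrak{m}$ of $R$ I would solve the problem over the localization $R_{\mathfrak{m}}$, obtaining an element of the relative elementary group carrying $(a,p)$ to $(1,0)$; a dilation lemma then shows that such a local solution is already induced by an element of $EL^1$ defined over a basic localization $R_g$ with $g\notin\mathfrak{m}$, and as $\mathfrak{m}$ varies these $g$ generate the unit ideal. Patching the finitely many local solutions over this comaximal cover --- the step where one must check that the glued automorphism again lies in $EL^1(A\op P,Y_m-1)$ and is trivial on the fibre $Y_m=1$ --- yields a global $\sigma\in EL^1(A\op P,Y_m-1)$ with $\sigma(a,p)=(1,0)$. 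It is precisely here that the rank hypothesis is only $\geq\max(2,\dim R+1)$ rather than $\geq\dim A+1$: the extra polynomial and Laurent variables are invisible to the argument because it is fibred over $\Spec R$ alone.

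Once $R$ is local, $\dim R\leq d$ and the modules acquire a rigid structure: projective $A$-modules become extended from $R$ in the relevant range by Lindel's theory, so $(i)$ reduces to a statement about unimodular rows over a Laurent polynomial ring over a local ring. On this model I would run the explicit relative computation in the single variable $Y_m$: using the specialization at $Y_m=1$ worked out above, together with Suslin's and Vaserstein's manipulations for the pair $(Y_m,Y_m^{-1})$, one connects $(a,p)$ to its specialized value by relative elementary operations, the allowed generators $\Delta_q$ (unrestricted) and $\Gamma_{c\phi}$ ($c\in(Y_m-1)$) being exactly enough to clear the $P$-coordinate and then adjust $a$ while staying in the congruence class $a\in 1+(Y_m-1)A$. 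I expect the main obstacle to be the local--global/dilation step for $EL^1(\cdot,Y_m-1)$: the relative elementary group is smaller than the full $E(A\op P)$, so one must verify that the patched automorphism stays inside it and fixes the fibre $Y_m=1$, which is the delicate bookkeeping that makes the relative statement genuinely stronger than Lindel's absolute one.

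Finally, $(ii)$ I would deduce from $(i)$ by the usual lifting argument. Given $\ol\tau\in\Aut_B(\ol P)$, the Whitehead lemma presents $\ol\tau\op\ol\tau^{-1}$ on $\ol P\op\ol P$ as a product of transvections; by Proposition \ref{trans} each of these lifts to an automorphism of $P$, so $\ol\tau\op\ol\tau^{-1}$ lifts to some $\Theta\in\Aut_A(P\op P)$. Reducing the discrepancy between $\Theta$ and a genuine lift of $\ol\tau$ to the movement of a single relative unimodular element, one applies the transitivity of $(i)$ to correct it by an element of $EL^1(A\op P,Y_m-1)$, producing an honest $\tau\in\Aut_A(P)$ with $\tau\equiv\ol\tau\pmod{(Y_m-1)}$. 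This is the standard passage from elementary transitivity on relative unimodular elements to surjectivity of the restriction map on automorphism groups, and it requires no ingredient beyond $(i)$ and \ref{trans}.
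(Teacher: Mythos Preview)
The paper does not prove this proposition at all: it is quoted as a result of Wiemers (\cite{Wiemers}, Theorem 3.2 and Corollary 3.4) and no argument is given. So there is no ``paper's own proof'' to compare your proposal against; the statement functions here purely as a black box that is invoked later (notably in the proof of Theorem \ref{z10}).

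That said, your outline is in the right spirit for how Wiemers-type results are established --- a Quillen--Suslin local--global principle over $\Spec R$, followed by a local computation in the Laurent variable --- but it remains a plan rather than a proof, with the dilation/patching step and the local case both left at the level of intentions. One concrete wobble in your deduction of $(ii)$ from $(i)$: you pass to $P\oplus P$ via the Whitehead presentation of $\ol\tau\oplus\ol\tau^{-1}$, but the transitivity in $(i)$ is stated for $A\oplus P$, not $P\oplus P$, so ``reducing the discrepancy \ldots\ to the movement of a single relative unimodular element'' needs to be made precise. The cleaner route, and exactly the one this paper uses in its own proof of Theorem \ref{z10}, is to lift $id_{\ol A}\oplus\ol\tau\in\Aut_{\ol A}(\ol A\oplus\ol P)$ to some $\theta\in\Aut_A(A\oplus P)$ via Proposition \ref{trans}, note that $\theta(1,0)\in\Um(A\oplus P,Y_m-1)$, apply $(i)$ to find $\mu\in EL^1(A\oplus P,Y_m-1)$ with $\mu\theta(1,0)=(1,0)$, and observe that $\mu\theta$ then restricts to an automorphism of $P$ lifting $\ol\tau$.
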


We state two results due to Wiemers (\cite{Wiemers}, Lemma 4.2 and
Theorem 4.3) respectively which are very crucial for our results.

\begin{proposition}\label{w001}
Let $A$ be a ring and let $P$ be an $A$-module (need not be
projective). Assume that there exists $p=[p_1,\ldots,p_n] \in
\Hom_A(A^n,P)$, $\phi=[\phi_1,\ldots,\phi_n]^t\in \Hom_A(P,A^n)$ and
$s_1,\ldots,s_n\in A$ such that 

$(i)$ $(0:s_i)=(0:s_i^2)$ for $i=1,\ldots,n$,

$(ii)$ $(\phi_i(p_j))_{n\times n} =$ diagonal $(s_1,\ldots,s_n)=N$.

Let $\CM$ be the subgroup of $\GL_n(A)$ consisting of all matrices
$1_n+T.N^2$ for some matrix $T$. Then the map $\Phi : \CM \ra
\Aut_A(P),$ defined by $\Phi(1_n+T.N^2)= Id_P+p.T.N.\phi$ is a group
homomorphism.
\end{proposition}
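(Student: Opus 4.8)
The plan is to verify, in order, that $\CM$ is a subgroup of $\GL_n(A)$, that $\Phi$ is well defined, that it is multiplicative, and that its values are automorphisms; the two substantive points are the roles played by hypotheses $(i)$ and $(ii)$, everything else being formal. First I would note that $\CM$ is a subgroup: among the matrices $1_n+TN^2$ with $T\in M_n(A)$, a product of two is again of this form because $(1_n+T_1N^2)(1_n+T_2N^2)=1_n+(T_1+T_2+T_1N^2T_2)N^2$, the identity $1_n$ is of this form, and if such a matrix $M=1_n+TN^2$ lies in $\GL_n(A)$ then $M^{-1}=1_n-M^{-1}TN^2$ is again of this form; hence the set of those $1_n+TN^2$ lying in $\GL_n(A)$ is a subgroup.

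Next I would establish well-definedness. A given $M\in\CM$ may be written as $1_n+TN^2$ in many ways, so the point is that $pTN\phi\in\End_A(P)$ is determined by $M$, i.e. that $UN^2=0$ forces $pUN\phi=0$ for $U\in M_n(A)$. Here $(i)$ enters: with $N=\diag(s_1,\ldots,s_n)$, the relation $UN^2=0$ says $U_{ij}s_j^2=0$ for all $i,j$, so $U_{ij}\in(0:s_j^2)=(0:s_j)$, hence $U_{ij}s_j=0$, i.e. $UN=0$, and therefore $pUN\phi=0$. This also gives $\Phi(1_n)=Id_P$ (take $T=0$).

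Then I would check multiplicativity. The key observation is that $(ii)$ says precisely $\phi\circ p=N$ as an endomorphism of $A^n$, since the $(i,j)$ entry of the matrix of $\phi\circ p$ is $\phi_i(p_j)$. Expanding via the product identity above,
\[
\Phi\bigl((1_n+T_1N^2)(1_n+T_2N^2)\bigr)=Id_P+pT_1N\phi+pT_2N\phi+pT_1N^2T_2N\phi ,
\]
whereas $\Phi(1_n+T_1N^2)\,\Phi(1_n+T_2N^2)=Id_P+pT_1N\phi+pT_2N\phi+(pT_1N\phi)(pT_2N\phi)$, and the last summand is $pT_1N(\phi p)T_2N\phi=pT_1N^2T_2N\phi$ by $\phi p=N$; so the two expressions coincide. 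Combined with $\Phi(1_n)=Id_P$ and the fact that $\CM$ is a group, this gives $\Phi(M)\Phi(M^{-1})=\Phi(M^{-1})\Phi(M)=Id_P$, so every $\Phi(M)$ is an automorphism and $\Phi:\CM\to\Aut_A(P)$ is a group homomorphism.

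I do not expect a serious obstacle; the whole argument is formal apart from the single implication $UN^2=0\Rightarrow UN=0$ (which is exactly the content of $(i)$) and the collapse of the cross-term through $\phi p=N$ (which is exactly $(ii)$). The step that most warrants care is well-definedness, precisely because $\Phi$ is prescribed through the non-unique matrix $T$ rather than through $M$; that is where an oversight would most naturally arise.
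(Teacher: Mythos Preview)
Your argument is correct and complete: the verification that $\CM$ is a subgroup, the use of hypothesis $(i)$ to secure well-definedness via $UN^2=0\Rightarrow UN=0$, and the collapse of the cross term through $\phi\circ p=N$ from hypothesis $(ii)$ are exactly the points that need to be checked, and you have checked them cleanly.

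There is nothing to compare against here, however: the paper does not prove this proposition but merely quotes it from Wiemers (\cite{Wiemers}, Lemma~4.2). Your write-up thus supplies what the paper omits, and the argument you give is essentially the natural one (and presumably the one Wiemers gives). The only cosmetic remark is that your closure-under-inverses step, writing $M^{-1}=1_n-M^{-1}TN^2$, tacitly uses that $M^{-1}$ exists in $M_n(A)$ because you have already restricted to $M\in\GL_n(A)$; this is fine but worth stating explicitly since the set of \emph{all} $1_n+TN^2$ need not consist of invertible matrices.
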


\begin{theorem}\label{w002}
Let $R$ be a ring of dimension $d$ with $1/d!\in R$ and 
$A=R[X_1,\ldots,X_n,Y_1^{\pm 1},\ldots, Y_m^{\pm 1}]$. Let $P$ be a
projective $A$-module of rank $d$. If $P/(X_1,\ldots,X_n)P$ is
cancellative, then $P$ is cancellative. In particular, if projective
$R$-modules of rank $d$ are cancellative, then projective
$R[X_1,\ldots,X_n]$-modules of rank $d$ are also cancellative.
\end{theorem}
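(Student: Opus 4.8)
The plan is to prove that $\Aut(A\op P)$ acts transitively on $\Um(A\op P)$; this yields cancellation of $P$ by the usual argument, since an automorphism of $A\op P$ carrying a given $v\in\Um(A\op P)$ to $(1,0)$ forces the summand complementary to $v$ to be isomorphic to $P$. The ``in particular'' is then simply the case $m=0$: here $P/(X_1,\ldots,X_n)P$ is a projective $R$-module of rank $d$, hence cancellative by hypothesis, and the main statement applies.

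I would argue by induction on $n$, the number of polynomial variables. When $n=0$ the hypothesis is literally the conclusion. For the inductive step put $A_1=R[X_1,\ldots,X_{n-1},Y_1^{\pm1},\ldots,Y_m^{\pm1}]$, so $A=A_1[X_n]$. Since $(P/X_nP)/(X_1,\ldots,X_{n-1})(P/X_nP)=P/(X_1,\ldots,X_n)P$ is cancellative, the inductive hypothesis applied over $A_1$ shows that $P/X_nP$ is cancellative over $A_1$, so $\Aut(A_1\op P/X_nP)$ acts transitively on $\Um(A_1\op P/X_nP)$. Given $v\in\Um(A\op P)$, reduce modulo $X_n$, pick $\bar\gs\in\Aut(A_1\op P/X_nP)$ with $\bar\gs(\bar v)=(1,0)$, write $\bar\gs$ as a product of transvections, lift each of them to an automorphism of $A\op P$ by Proposition~\ref{trans}, and apply the resulting product: this replaces $v$ by an element congruent to $(1,0)$ modulo $X_n$, i.e.\ one lying in $\Um(A\op P,X_nA)$. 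So the whole theorem reduces to the following one-variable assertion: for $B:=A_1$ and $P$ a projective $B[X]$-module of rank $d=\dim R$, every element of $\Um(B[X]\op P,XB[X])$ lies in the $\Aut(B[X]\op P)$-orbit of $(1,0)$.

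To prove this one-variable assertion I would apply Lemma~\ref{3.1} to choose $s\in B[X]$ with $P_s$ free of rank $d$, together with $p_1,\ldots,p_d\in P$, $\gf_1,\ldots,\gf_d\in P^{\ast}$ with $N=(\gf_i(p_j))=\diag(s,\ldots,s)$, $sP\subset\sum_i p_iB[X]$ and $(0:s)=(0:s^2)$. Over $B[X]_s$ the module $B[X]_s\op P_s$ is free of rank $d+1\geq 3$, so $v$ becomes a unimodular row of length $d+1$ that equals $(1,0,\ldots,0)$ at $X=0$; by homotopy invariance for $\SL_{d+1}$ over $(\,\cdot\,)[X]$ and the transitivity of the elementary group on unimodular rows of length $d+1$ over rings of the present type — this is where $1/d!\in R$ and $\dim R=d$ are used; compare Proposition~\ref{3.2} and its surjectivity statement in the rank $\geq d+1$ case — the row $v$ can be carried to $(1,0)$ over $B[X]_s$ by an elementary transformation $\gt$ which is the identity at $X=0$. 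The function of Proposition~\ref{w001} is to manufacture honest automorphisms of $P$ out of matrices $1_d+TN^2$, via $\Phi(1_d+TN^2)=\mathrm{Id}_P+p\,T\,N\,\gf$; splitting $\gt$ by a Quillen-type $s$-and-$(1+s)$ patching and globalising the variable $X$ by the monic polynomial theorem~\ref{monic}, one glues the local data into a genuine automorphism of $B[X]\op P$ carrying $v$ to $(1,0)$.

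The main obstacle is exactly this last gluing step: the transformation $\gt$ lives only over the localisation $B[X]_s$ where $P$ becomes free, and it must be pulled back to an automorphism of $B[X]\op P$ while simultaneously controlling its reduction modulo $X$, its behaviour over $B[X]_s$ and over $B[X]_{1+sB[X]}$, and its degree in $X$; this is where Lemma~\ref{3.1}, Proposition~\ref{w001}, Theorem~\ref{monic} and the hypothesis $1/d!\in R$ must all be brought to bear at once. A subsidiary point to be careful about is the dimension bookkeeping: $B=A_1$ generally has dimension larger than $d$, so the transitivity input for length-$(d+1)$ unimodular rows has to be used in the form valid over $R[X_1,\ldots,X_n,Y_1^{\pm1},\ldots,Y_m^{\pm1}]$ for arbitrarily many variables (as in Proposition~\ref{3.2} and Theorem~\ref{lin}, the latter applied to $P\op A$ of rank $d+1$), rather than over a ring of dimension exactly $d$.
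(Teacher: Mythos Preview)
The paper does not prove Theorem~\ref{w002}: it is quoted in the Preliminaries as a result of Wiemers (\cite{Wiemers}, Theorem~4.3) and used thereafter as a black box. There is thus no proof in the paper against which to compare your proposal.

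Regarding the proposal itself, the broad architecture --- induction on $n$, reducing to a one-variable relative statement over $B=A_1$, then invoking Lindel's element $s$ from Lemma~\ref{3.1} together with Proposition~\ref{w001} and a patching argument --- is indeed in the spirit of Wiemers' original proof. Two points, however, are problematic. First, in the inductive step you propose to ``write $\bar\gs$ as a product of transvections'' so as to lift it via Proposition~\ref{trans}; but the inductive hypothesis only tells you that $\Aut(A_1\op P/X_nP)$ acts transitively on $\Um(A_1\op P/X_nP)$, not that $E(A_1\op P/X_nP)$ does. You have no right to assume $\bar\gs$ is a product of transvections, and Proposition~\ref{trans} lifts only transvections, so this step is unjustified as written. (One way around this is to choose $s\in R$ as in Lemma~\ref{z8}, so that $\dim R/s^2R<d$ and Theorem~\ref{lin} applies over $A/s^2A$, giving genuine $E$-transitivity there; this is closer to how the paper handles the analogous reductions in Corollary~\ref{q4} and Theorem~\ref{1.1}.)

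Second, and more seriously, you yourself flag the gluing step as ``the main obstacle'' and then do not carry it out. The final paragraph is a list of ingredients --- an elementary $\gt$ over $B[X]_s$ that is the identity at $X=0$, a vague appeal to ``Quillen-type $s$-and-$(1+s)$ patching'', the map $\Phi$ of Proposition~\ref{w001}, and Theorem~\ref{monic} --- but no argument showing how they fit together to produce an automorphism of $B[X]\op P$ sending $v$ to $(1,0)$. This is exactly where the substance of Wiemers' theorem lies, and in particular where the hypothesis $1/d!\in R$ is actually consumed (via a Mennicke-symbol / factorial-row computation, not merely via Proposition~\ref{3.2}). As it stands, the proposal is a plausible outline of the strategy but not a proof.
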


We end this section by stating two results due to Keshari
(\cite{kes05}, Theorem 3.5 and Theorem 4.4) and (\cite{kes}, Theorem
3.10).

\begin{theorem}\label{kes3.5}
Let $R$ be an affine algebra of dimension $n\geq 3$ over an
algebraically closed field $k$ with $1/(n-1)!\in k$.  Let
$g,f_1,\ldots,f_r$ be a $R$-regular sequence and
$A=R[f_1/g,\ldots,f_r/g]$. Let $P'$ be a projective $A$-module of rank
$n-1$ which is extended from $R$. Let $(a,p)\in \Um(A\op P')$ and
$P=A\op P'/(a,p)A$. Then $P$ is extended from $R$.
\end{theorem}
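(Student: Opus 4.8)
The plan is to turn the statement into a question about moving a unimodular element by an automorphism, and then to attack that question with the Wiemers-type machinery recalled above, adapted to the base change $R\ra A$. Since $P'$ is extended from $R$, fix a projective $R$-module $P_0'$ of rank $n-1$ with $P'\iso A\ot_R P_0'$, and set $Q_0:=R\op P_0'$, a projective $R$-module of rank $n$ carrying a free direct summand. Then $A\op P'\iso A\ot_R Q_0$, so $P$ is already \emph{stably} extended from $R$; and if some $\sigma\in\Aut_A(A\op P')$ carries $(a,p)$ onto the image of a unimodular element $v\in\Um(Q_0)$, then $P\iso(A\op P')/\sigma(a,p)A\iso A\ot_R(Q_0/Rv)$ is extended from $R$. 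So it suffices to show that the $\Aut_A(A\op P')$-orbit of $(a,p)$ meets the image of $\Um(Q_0)$.

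First I would record the structure of $A$. With $I:=(f_1,\dots,f_r)R$ and $\ma:=(f_1/g,\dots,f_r/g)A$ one has $A\iso R[Y_1,\dots,Y_r]/(gY_1-f_1,\dots,gY_r-f_r)$, and since $g,f_1,\dots,f_r$ is an $R$-regular sequence of length $r+1$: $g$ is a non-zero-divisor on $A$; $A_g\iso R_g$; $A/\ma\iso R/I$; and $A/gA\iso \bigl(R/(g,f_1,\dots,f_r)\bigr)[Y_1,\dots,Y_r]$, a polynomial ring in $r$ variables over a ring of dimension $n-r-1$. Thus $\dim A=n$, and geometrically $\Spec A$ is the chart of the blow-up of $\Spec R$ along the complete intersection ideal $(g)+I$ in which the exceptional divisor becomes the affine space $\BA^r$ over $R/(g,f_1,\dots,f_r)$. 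The twin facts ``$A$ becomes $R$ after inverting $g$'' and ``$A$ is a polynomial ring over a \emph{lower-dimensional} ring modulo $g$'' are exactly what make the descent go.

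\emph{Stage 1: normalize $(a,p)$ modulo $g$.} The reduction of $A\op P'$ modulo $gA$ is $(A/gA)\op\ol{P'}$ with $\ol{P'}:=(A/gA)\ot_R P_0'$, a projective module of rank $n-1$ over the polynomial ring $A/gA=\bigl(R/(g,f_1,\dots,f_r)\bigr)[Y_1,\dots,Y_r]$. Since $n\geq 3$ and $r\geq 1$ the rank condition $n-1\geq\max\bigl(2,(n-r-1)+1\bigr)$ holds, so Lindel's theorem (\ref{lin}) gives that $E\bigl((A/gA)\op\ol{P'}\bigr)$ acts transitively on $\Um\bigl((A/gA)\op\ol{P'}\bigr)$; hence the image of $(a,p)$ can be carried to $(1,0)$ by a product of transvections, each of which lifts to an automorphism of $A\op P'$ by the Bhatwadekar--Roy property (\ref{trans}). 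Applying the composite of these lifts to $(a,p)$, we may assume from now on that $a\in 1+gA$ and $p\in g(A\op P')$. (If $r=0$ then $A=R$ and there is nothing to prove.)

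\emph{Stage 2: descend along $g$.} It remains to move such a $g$-relative unimodular element into the image of $\Um(Q_0)$. Here I would apply Lindel's Lemma (\ref{3.1}) to $Q_0$ over $R$ to get $s\in R$ with $(Q_0)_s$ free, elements $q_1,\dots,q_n\in Q_0$ and functionals $\phi_1,\dots,\phi_n\in Q_0^{*}$ with $(\phi_i(q_j))$ the diagonal matrix $N=\mathrm{diag}(s,\dots,s)$, with $sQ_0\subset\sum_i q_iR$ and $(0:s)=(0:s^2)$. After base change to $A$ this is exactly the input of Wiemers' Lemma (\ref{w001}), which produces a group homomorphism from a group of unipotent matrices $1_n+T\cdot N^2$ into $\Aut_A(A\ot_R Q_0)$. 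Running the divided-power manipulation from the proof of (\ref{w002}) on the $g$-relative unimodular element, via this homomorphism and the identification $A_g\iso R_g$ (patching along $g$, using the Stage 1 normalization), the goal is to produce an automorphism of $A\op P'$ carrying $(a,p)$ onto the image of some $v\in\Um(Q_0)$; by the reduction above this gives that $P$ is extended from $R$. The hypothesis $1/(n-1)!\in k$ enters precisely in this divided-power step and is indispensable, because the rank $n-1$ equals $\dim A-1$, the exact value at which Bass's theorem (\ref{bass}) fails. This Stage 2 is also where I expect the real difficulty to lie: $A$ is not a polynomial or Laurent extension of $R$, so the theorems (\ref{3.2}) and (\ref{w002}) are not available as black boxes and the descent along the non-zero-divisor $g$ has to be carried out by hand with the explicit automorphisms furnished by (\ref{3.1}) and (\ref{w001}).
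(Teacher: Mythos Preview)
The paper does not prove this theorem. It appears in the Preliminaries section as a quoted result, attributed to \cite{kes05}, Theorem~3.5, and is used only as a black box later (for instance in the proof of (\ref{1.3})(ii) and (\ref{fp})). There is therefore no ``paper's own proof'' against which to compare your proposal.

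On your proposal itself: the structural facts you record about $A$ --- that $A_g\iso R_g$ and that $A/gA$ is a polynomial ring in $r$ variables over the $(n-r-1)$-dimensional ring $R/(g,f_1,\ldots,f_r)$ --- are correct and are indeed the engine behind results of this type for blow-up rings. Your Stage~1 is a clean and valid reduction: the rank check for (\ref{lin}) goes through exactly as you wrote it, and the lift via (\ref{trans}) is legitimate. Stage~2, however, is not a proof but a plan. You invoke (\ref{w001}) to obtain automorphisms from matrices of the form $1_n+TN^2$, but you never explain how to produce a specific such matrix carrying your $g$-relative unimodular element into the image of $\Um(Q_0)$, nor how the isomorphism $A_g\iso R_g$ is concretely exploited to ``patch along $g$''. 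Appealing to ``the divided-power manipulation from the proof of (\ref{w002})'' does not fill this gap, since (\ref{w002}) treats polynomial and Laurent extensions and --- as you yourself concede --- is not available as a black box for the ring $A$ at hand. As written, the proposal has a genuine gap at Stage~2: the hard step is acknowledged but not carried out.
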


\begin{theorem}\label{kes4.4}
Let $R$ be an affine $k$-algebra of dimension $n\geq 3$, where $k$ is
a real closed field. Let $f\in R$ be an element not belonging to any real
maximal ideal of $A$. Assume that either

$(i)$ $A=R[f_1/f,\ldots,f_r/f]$, where $f,f_1,\ldots,f_r$ is a regular
sequence in $R$ or

$(ii)$ $A=R_f$.

Let $P'$ be a projective $A$-module of rank
$\geq n-1$ which is extended from $R$. Let $(a,p)\in \Um(A\op P')$ and
$P=A\op P'/(a,p)A$. Then $P$ is extended from $R$.
\end{theorem}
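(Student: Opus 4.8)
The final statement (Theorem \ref{kes4.4}) asserts that a projective module obtained as a quotient $P = A \op P'/(a,p)A$ of a rank-$\geq n-1$ extended module is again extended from $R$, in the setting where $A$ is a localization-type extension $R[f_1/f,\ldots,f_r/f]$ or $R_f$ over a real closed field $k$. The plan is to reduce the problem to a statement about extendedness that can be detected by patching information from $R_f$ together with information at the other primes lying over $f$, and to exploit the special geometry of real closed fields (where real maximal ideals behave like the points of the real spectrum) to control the relevant obstruction. The structural parallel with Theorem \ref{kes3.5} over algebraically closed fields strongly suggests that the same machinery—moving-lemma / general-position arguments together with a descent criterion for extendedness—carries over, with the hypothesis ``$f$ does not belong to any real maximal ideal'' playing the role that the algebraic-closure hypothesis played before.

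\textbf{Key steps.} First I would record that $P$ is a projective $A$-module of the same rank as $P'$ (the quotient by a unimodular element drops the rank by one from $A \op P'$), so the statement is internally consistent, and I would set up the comparison with the extended module $P'$ whose extendedness from $R$ is given. Second, I would invoke the relevant local-global / descent criterion for extendedness: a projective $A$-module is extended from $R$ precisely when it is extended after a suitable faithfully flat or patching cover, so the task becomes checking extendedness on the pieces $R_f$ (where $A$ localizes nicely) and controlling the behavior along $V(f)$. Third, in case $(i)$ where $A = R[f_1/f,\ldots,f_r/f]$ with $f, f_1,\ldots,f_r$ a regular sequence, I would use the regular-sequence hypothesis to understand $A$ as built from $R$ by adjoining the fractions, mirroring the construction in Theorem \ref{kes3.5}; in case $(ii)$ where $A = R_f$, the analysis is the simple-localization case. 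Fourth—and this is where the real-closed hypothesis enters decisively—I would use the condition that $f$ avoids every real maximal ideal of $R$ to guarantee that the relevant closed set $V(f)$ contains no real points, so that obstructions to extendedness supported on $V(f)$ vanish by a real-spectrum argument (the $2$-torsion / orientation-type phenomena that obstruct cancellation over $\BR$ are absent precisely off the real locus). This is the step that replaces the vanishing arguments available over $\ol{k}$.

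\textbf{Main obstacle.} The hard part will be the fourth step: over a real closed field one cannot simply assert that the obstruction vanishes, as one could over an algebraically closed field, and one must instead show that the hypothesis on $f$ forces the support of any potential obstruction to lie entirely within the non-real locus, where the descent/patching then succeeds. Concretely, I expect the delicate point to be verifying that the unimodular element $(a,p)$ and the gluing data it produces can be chosen—after modification by an automorphism, using the transitivity and lifting results (\ref{trans}, \ref{3.2})—so that the quotient $P$ agrees with an extended module on an open cover whose overlaps lie off the real locus of $R$. Once that is arranged, the real-closed-field analogue of the extendedness criterion closes the argument in both cases $(i)$ and $(ii)$ uniformly, with the regular-sequence condition in $(i)$ ensuring that $A$ is nice enough (a suitable ``nice'' affine extension) for the patching to be carried out.
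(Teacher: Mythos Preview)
The paper does not prove Theorem~\ref{kes4.4}. It is stated in the Preliminaries section as a result quoted from the author's earlier papers (\cite{kes05}, Theorem~4.4, and \cite{kes}, Theorem~3.10), and it is used only as a black box later on (in \ref{z5}, \ref{free2}, \ref{free1}). There is thus no proof in this paper against which your proposal can be compared.

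Independently of that, what you have written is a strategy sketch rather than a proof, and the sketch is imprecise at the points that matter. The appeals to an unspecified ``local-global / descent criterion for extendedness,'' to ``obstructions supported on $V(f)$,'' and to ``real-spectrum arguments'' are not backed by any concrete statement available in this paper, and the results you cite for the modification step are off target: (\ref{3.2}) is about Laurent polynomial extensions $R[X_i,Y_j^{\pm1}]$, not about $R_f$ or $R[f_1/f,\ldots,f_r/f]$, and (\ref{trans}) only lifts transvections along a quotient $P\to P/IP$, which does not by itself produce the patching data you allude to. If you wish to actually establish the theorem, you will need to go to the cited references, where the argument is carried out with the specific tools developed there rather than the generic descent language you invoke.
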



\section{Main Theorem}

We begin this section with the following result which is very crucial
for later use and seems to be well known to experts. Since we are
unable to find an appropriate reference, we give the complete proof.

\begin{proposition}\label{cartesian1}
Let $A$ be a ring of dimension $d$ and let $J$ be an ideal of
$A$. Consider the cartesian square
$$\xymatrix{ C\ar [r]^{i_1} \ar [d]_{i_2} & A
\ar [d]^{j_1} \\ A \ar [r]_{j_2} & A/J }$$
Then $C$ is finitely generated algebra over $A$ of dimension $d$.
In particular, if $A$ is an affine algebra over a field $k$, then $C$ is 
also an affine algebra over $k$. 
\end{proposition}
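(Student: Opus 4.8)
The plan is to identify $C$ with the fibre product $A\times_{A/J}A=\{(a,b)\in A\times A:\ a-b\in J\}$, viewed as a subring of $A\times A$, and then to exhibit the structure map $A\ra C$ as a module-finite extension. The $A$-algebra structure on $C$ is the one coming from the diagonal $\Delta:A\ra C$, $a\mapsto(a,a)$, which is a ring homomorphism; since $\Delta$ is a section of the projection $i_1:C\ra A$ (indeed $i_1\circ\Delta=\mathrm{id}_A$), it is injective, so $A$ may be regarded as a subring of $C$.

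First I would check that $C$ is finite as an $A$-module. As $A$ is Noetherian, write $J=(x_1,\dots,x_n)$. For $(a,b)\in C$ we have $b-a=\sum_i c_i x_i$ for suitable $c_i\in A$, whence $(a,b)=(a,a)+\sum_i(c_i,c_i)\,(0,x_i)$. Thus $C$ is generated as an $A$-module (via $\Delta$) by the finitely many elements $(1,1),(0,x_1),\dots,(0,x_n)$. In particular $C$ is a finitely generated $A$-algebra, and the inclusion $A\subseteq C$ is integral.

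The dimension equality $\dim C=\dim A=d$ then follows from the classical fact that an integral extension of rings preserves Krull dimension: lying-over gives $\dim C\geq\dim A$ and incomparability gives $\dim C\leq\dim A$. Finally, for the last assertion, if $A$ is an affine algebra over a field $k$, then $A$ is a finitely generated $k$-algebra, and since $C$ is a finitely generated $A$-algebra it is a finitely generated $k$-algebra, i.e.\ an affine $k$-algebra.

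I do not expect a genuine obstacle here; the argument is essentially bookkeeping with the fibre-product description. The one step requiring a little care is the finiteness of $C$ as an $A$-module, which is precisely where the Noetherian hypothesis enters (to ensure $J$ is finitely generated); granting that, the dimension statement and the ``affine'' addendum are immediate from standard commutative algebra.
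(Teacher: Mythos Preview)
Your proof is correct and follows essentially the same approach as the paper: both identify $C$ with $\{(a,b)\in A\times A:\ a-b\in J\}$, exhibit the generators $(0,x_i)$ for $J=(x_1,\dots,x_n)$ over the diagonal copy of $A$, and conclude via integrality that $\dim C=\dim A$. Your version is slightly more streamlined in that you observe module-finiteness directly (hence integrality for free), whereas the paper first verifies the universal property to rewrite $C$ as $A\oplus J$ and then checks an explicit integral equation $(0,x_i)^2-(x_i,0)(0,x_i)=0$; but the content is the same.
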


\begin{proof}
Recall that $C$ is the subalgebra of $A\times A$ consisting of all
elements $(a,b)$ such that $a-b \in J$. First we will show that $C\iso
A\op J$, where $A\op J$ has the obvious ring structure,
i.e. $(a,x)+(a',x') =(a+a',x+x')$ and
$(a,x).(a',x')=(aa',ax'+a'x+xx')$ for $(a,x),(a',x')\in A\op J$.

We define $i_1:A\op J \ra A$ by $i_1(a,x)=a+x$ and $i_2:A\op J \ra A$ by
$i_2(a,x)=a$. Then $ j_1i_1=j_2i_2$. It is enough to show that $A\op
J$ satisfies the universal property of cartesian square.  Let $B$ be a
ring and let $f_i : B\ra A$ be ring homomorphism, $i=1,2$ such that
$j_1f_1=j_2f_2$. To show that there exists a unique
ring homomorphism $F:B\ra
A\op J$ such that $i_1F=f_1$ and $i_2F=f_2$.

Define $F(b)=(f_2(b), f_1(b)-f_2(b))$. Since $j_1f_1=j_2f_2$, $F:B\ra
A\op J$. Also it is clear that $i_1F=f_1$ and $i_2F=f_2$. It remains
to show that $F$ is a ring homomorphism. Clearly, $F(b+b')=F(b)+F(b')$
for $b,b'\in B$. We have \\

$F(b).F(b')
=(f_2(b),f_1(b)-f_2(b)).(f_2(b'),f_1(b')-f_2(b'))$

$= (f_2(b)f_2(b'), f_2(b)(f_1(b')-f_2(b'))+
f_2(b')(f_1(b)-f_2(b))+(f_1(b)-f_2(b))(f_1(b')-f_2(b')))$

$=(f_2(bb'),f_1(bb')-f_2(bb')) =F(bb').$ \\

Uniqueness of $F$ follows from the fact that $i_1F=f_1$ and $i_2F=f_2$.
This proves that $C\iso A\op J$. If $J=(a_1,\ldots,a_r)$, then $A\op
J$ is generated by $(0,a_1),\ldots,(0,a_r)$ over $ A\op 0$,
since if $x=a_1x_1+\ldots+a_rx_r \in J$, then
$(0,x)=(x_1,0).(0,a_1)+\ldots+(x_r,0).(0,a_r)$. Hence $A\op J$ is a
finitely generated algebra over $A$.

To show that $\dim A\op J =\dim A$, we show that $A\op J$ is integral
over $A$. It is enough to show that $(0,a_i)$, $i=1,\ldots,r$ are
integral over $A$. Clearly $(0,a_i)^2-(a_i,0)(0,a_i)=(0,0)$. This
proves the result.
$\hfill \square$
\end{proof}

\begin{corollary}\label{cartesian}
Let $A$ be a ring and let $s\in A$. Then the cartesian square of $(A,A)$
over $A/sA$ is $A[X]/(X^2-sX)$.
\end{corollary}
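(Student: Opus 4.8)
The plan is to invoke Proposition~\ref{cartesian1} with $J=sA$ and then identify the ring it produces. That proposition gives a ring isomorphism $C\iso A\op sA$, where $A\op sA$ carries the multiplication $(a,x)(a',x')=(aa',ax'+a'x+xx')$, has identity $(1,0)$, and is finitely generated and integral over the subring $A\op 0$. So it suffices to exhibit an $A$-algebra isomorphism $A[X]/(X^2-sX)\iso A\op sA$.

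First I would write down the map. Define $\psi\colon A[X]\ra A\op sA$ by $\psi(a)=(a,0)$ for $a\in A$ and $\psi(X)=(0,s)$; since $A\op sA$ is commutative this is a well-defined $A$-algebra homomorphism. Using the multiplication formula one gets $(0,s)^2=(0,s^2)$ and $(s,0)(0,s)=(0,s^2)$, so $\psi(X^2-sX)=0$, and $\psi$ induces a homomorphism $\ol\psi\colon A[X]/(X^2-sX)\ra A\op sA$.

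Surjectivity of $\ol\psi$ is immediate: an arbitrary element of $A\op sA$ has the form $(a,bs)$ with $a,b\in A$, and $(a,bs)=(a,0)+(b,0)(0,s)=\ol\psi(a+bX)$. For injectivity I would first note that in $A[X]/(X^2-sX)$ one has $X^{n}=s^{n-1}X$ for $n\ge 1$, so $\{1,X\}$ spans it over $A$, and a short division-with-remainder check (the leading coefficient of $X^2-sX$ is a unit) shows no nontrivial $A$-linear relation between $1$ and $X$ survives; hence every element is uniquely $a+bX$ and $\ol\psi(a+bX)=(a,bs)$, which vanishes precisely when $a=0$ and $bs=0$. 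Thus $\ol\psi$ is injective once $s$ is a non-zero-divisor on $A$ — the case in which the corollary is used (if needed one first passes to $A_{red}$, where the element $s$ of Lemma~\ref{3.1} is a non-zero-divisor). Composing $\ol\psi$ with the isomorphism of Proposition~\ref{cartesian1} then gives $A[X]/(X^2-sX)\iso C$.

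The only step needing real care is injectivity, and it is exactly here that the non-zero-divisor hypothesis enters: when $s$ is a genuine zero-divisor, $A\op sA$, which as an $A$-module equals $A\op A/(0:sA)$, is not $A$-free of rank $2$, so the two rings genuinely differ (e.g.\ $A=\BZ/4$, $s=2$). Everything else — constructing $\psi$, checking it annihilates $X^2-sX$, and surjectivity — is routine manipulation of the explicit ring structure on $A\op sA$ furnished by Proposition~\ref{cartesian1}.
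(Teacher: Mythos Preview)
Your derivation is correct and is the natural one from Proposition~\ref{cartesian1}; the paper itself offers no separate proof for this corollary, treating it as immediate. You are also right to flag that the statement, as written, fails for arbitrary $s$: the $A$-algebra map $A[X]/(X^2-sX)\ra A\op sA$, $X\mapsto(0,s)$, is always surjective but its kernel is $(0:sA)\cdot X$, and your example $A=\BZ/4$, $s=2$ is a genuine counterexample (the fiber product has $8$ elements while $A[X]/(X^2-sX)$, being $A$-free of rank $2$, has $16$). In every place the paper invokes the corollary (starting with Lemma~\ref{q3}) one has already reduced to $A$ reduced with $s$ a non-zero-divisor, so nothing downstream is harmed; nonetheless the hypothesis that $s$ be a non-zero-divisor ought to be added to the statement, exactly as you observe.
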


The following result is very crucial for later use.

\begin{lemma}\label{q3}
Let $A$ be a ring and let $P$ be a projective $A$-module of rank
$r$. Choose $s\in A$ satisfying the properties of (\ref{3.1}). Assume
that $R^r$ is cancellative, where $R=A[X]/(X^2-s^2X)$. Then $\Aut(A\op
P,sA)$ acts transitively on $\Um^1(A\op P,s^2 A)$.
\end{lemma}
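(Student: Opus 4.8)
The plan is to exploit the cartesian-square description from Corollary~\ref{cartesian}: since $R = A[X]/(X^2 - s^2X)$ is the fibre product of $A$ and $A$ over $A/s^2A$, a projective $R$-module is the same data as a pair of projective $A$-modules glued over $A/s^2A$. I would apply this to the $R$-module obtained from $P$ together with a patching isomorphism, so that the hypothesis ``$R^r$ is cancellative'' translates into a statement about automorphisms of $P/s^2P$ that lift to $\Aut(P)$, in the spirit of Proposition~\ref{trans}. Concretely, start with $(a,p)\in\Um^1(A\op P, s^2A)$, so $a\in 1+s^2A$ and we want $\gs\in\Aut(A\op P, sA)$ with $\gs(1,0)=(a,p)$.

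The key steps, in order. First, use Lemma~\ref{3.1}: $s$ gives $p_1,\dots,p_r\in P$ and $\phi_1,\dots,\phi_r\in P^*$ with $(\phi_i(p_j)) = \diag(s,\dots,s)$ and $sP\subset\sum p_iA$, and $(0:sA)=(0:s^2A)$; note $(0:s^jA)$ stabilises so Proposition~\ref{w001} applies with the $s_i$ all equal to $s$ (or to $s^2$ after adjusting). Second, build from $P$ the projective $R$-module $Q = R\ot_A P$ glued to itself by the automorphism of $(A/s^2A)\op (P/s^2P)$ determined by $(a,p)$ reduced mod $s^2$ — here one uses that mod $s^2A$ the element $p$ lies in $s^2P$... wait, rather use that $(\ol a, \ol p)$ is unimodular over $A/s^2A\op P/s^2P$ and that over $A/s^2$ the module $P$ is ``close to free'' via the $p_i,\phi_i$. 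Third, the cancellation hypothesis $R^r$ cancellative, together with $Q\op R \iso R^{r+1}$ (which follows because the two gluings of $P$ and of $A\op P/(a,p)$ differ by something in $\Um(A\op P)$), forces $Q\iso R^r$; unwinding the fibre-square description, this isomorphism is precisely a matrix over $A$ congruent to identity mod $s^2A$ carrying the standard generators to $(a,p)$-type data. Fourth, convert that matrix, via Proposition~\ref{w001} (which produces genuine automorphisms $Id_P + p\,T\,N\,\phi$ of $P$ from matrices $1_r + TN^2$, and these lie in the relevant congruence subgroup because $N = \diag(s,\dots)$ so $N^2$ carries the $s^2$), into an element of $\Aut(A\op P)$ that is identity mod $sA$ and sends $(1,0)$ to $(a,p)$.

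The main obstacle I expect is the bookkeeping in the gluing step: one must check that the automorphism of $P/s^2P$ coming from $(a,p)$ is in the image of $\GL_r(A/s^2A)$-data compatible with $N$, i.e. that after the Lemma~\ref{3.1} normalisation the relevant transition cocycle is trivialised by a matrix in the subgroup $\CM$ of Proposition~\ref{w001}, so that Proposition~\ref{w001}'s homomorphism $\Phi$ actually lands in $\Aut(A\op P, sA)$ rather than just $\Aut(A\op P, s^2A)$ or all of $\Aut(P)$. Getting the powers of $s$ to match (the statement asks for transitivity of $\Aut(A\op P, sA)$ on $\Um^1(A\op P, s^2A)$, an asymmetry that is surely there precisely so that $N^2 \equiv 0 \bmod s^2$ lets Proposition~\ref{w001} absorb the discrepancy) will require care, but is the kind of computation that Lemma~\ref{3.1}(iv),(v) and Proposition~\ref{w001}(i) are tailored to handle. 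The other routine-but-nontrivial point is verifying $Q\op R\iso R^{r+1}$, which should follow from $A\op P/(a,p)A$ being projective of rank $r$ and stably free after the gluing, together with a Mayer–Vietoris / patching argument over the cartesian square.
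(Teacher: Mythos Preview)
You have assembled the right toolkit --- the fibre-product description $R=A[X]/(X^2-s^2X)\cong A\times_{A/s^2A}A$ from Corollary~\ref{cartesian}, Lindel's data from Lemma~\ref{3.1}, and Wiemers' homomorphism $\Phi$ from Proposition~\ref{w001} --- and your final step (turning a matrix in $\GL_{r+1}(A,s^2A)$ into an element of $\Aut(A\op P,sA)$ via $\Phi$) is exactly right, including your observation about the $sA$ versus $s^2A$ asymmetry. The gap is in your middle section, where you try to glue \emph{modules}.

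Your step~2/3 is where the argument stalls (and where you yourself write ``wait''). Gluing $P$, or $A\op P$, or $A\op P/(a,p)A$ along some automorphism over $A/s^2A$ produces a projective $R$-module $Q$, but you never pin down which gluing, and more seriously you give no reason why $Q\op R$ should be \emph{free}: the hypothesis is only that $R^r$ is cancellative, so you need $Q$ to be stably free of rank $r$ to use it, and none of your candidate gluings obviously have that property (for instance $R\ot_A P$ has no reason to be stably free, since $P$ is arbitrary). The ``Mayer--Vietoris / patching argument'' you allude to would have to produce this, and it is not routine.

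The paper sidesteps this entirely by reducing to \emph{unimodular rows} before patching. Given $(f,q)\in\Um^1(A\op P,s^2A)$, first add a multiple of $f$ to $q$ so that $q\in s^3P$; then Lemma~\ref{3.1}(iii) gives $q=\sum f_ip_i$ with each $f_i\in s^2A$, and one checks $(f,f_1,\dots,f_r)\in\Um_{r+1}(A,s^2A)$. Now patch this row with the trivial row $(1,0,\dots,0)$ over $A/s^2A$: this yields a single unimodular row over $R$, whose kernel is a stably free $R$-module of rank $r$, hence free by hypothesis. Unwinding, one gets $\Delta\in\GL_{r+1}(A,s^2A)$ with $(f,f_1,\dots,f_r)\Delta=(1,0,\dots,0)$, and then your step~4 via Proposition~\ref{w001} (with $(s_1,\dots,s_{r+1})=(1,s,\dots,s)$, so $N^2=\diag(1,s^2,\dots,s^2)$) finishes. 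The point is that working with rows rather than modules makes the patched object automatically ``inside $R^{r+1}$'', so the cancellativity hypothesis applies on the nose.
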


\begin{proof}
Without loss of generality, we can assume that $A$ is reduced.  By
$(\ref{3.1})$, there exist $p_1,\ldots,p_r\in P$ and
$\phi_1,\ldots,\phi_r \in \Hom(P,A)$ such that $P_s$ is free,
$(\phi_i(p_j))=$ diagonal $(s,\ldots,s)$, $sP\subset p_1A+\ldots+
p_rA$ and $s$ is a non-zero-divisor.

Let $(f,q)\in \Um^1(A\op P,s^2 A)$.  Since $f\in 1+s^2A$, by adding
some multiple of $f$ to $q$, we may assume that $q\in s^3P$.
Since $sP \subset p_1A+\ldots+p_rA$, we can write $q=f_1p_1+\ldots
+f_rp_r$ for some $f_i \in s^2A$, $i=1,\ldots,r$.  Note that
$(f,f_1,\ldots,f_r)\in \Um_{r+1}(A,s^2A)$.

By (\ref{cartesian}), $R$ is the cartesian square of $(A,A)$ over $A/s^2A$.
$$\xymatrix{ R\ar [r]^{i_1} \ar [d]_{i_2} & A
\ar [d]^{j_1} \\ A \ar [r]_{j_2} & A/(s^2) }$$

Patching unimodular rows $(f,f_1,\ldots,f_r)$ and
$(1,0,\ldots,0)$ over $A/s^2A$, we get a unimodular row
$(c_0,c_1,\ldots,c_r)\in \Um_{r+1}(R)$. Since $R^{r}$ is cancellative,
there exists $\Theta \in \GL_{r+1}(R)$ such that
$(c_0,c_1,\ldots,c_r)\Theta=(1,0,\ldots,0)$. The projections of this
equation gives
$$(f,f_1,\ldots,f_r)\Psi=(1,0,\ldots,0),\;
(1,0,\ldots,0)\wt \Psi=(1,0,\ldots,0)$$ 
for certain matrices
$\Psi,\wt \Psi\in \GL_{r+1}(A)$ such that $\Psi=\wt\Psi$ modulo
$(s^2)$.  Hence $(f,f_1,\ldots,f_r)\Psi\,\wt\Psi^{-1}=(1,0,\ldots,0)$,
where $\Psi\wt\Psi^{-1}=\Delta\in \GL_{r+1}(A,s^2A)$.

Let $\Delta=1+TN^2$, where $T$ is some matrix and $N=$ diagonal
$(1,s,\ldots,s)$. Applying $(\ref{w001})$ with $n=r+1$ and
$(s_1,\ldots,s_n)=(1,s,\ldots,s)$, we get $\Phi(\Delta) =Id +pTN\phi
\in \Aut (A\op P, sA)$, where $p=[p_1,\ldots,p_n] \in \Hom(A^n,P)$ and
$\phi=[\phi_1,\ldots,\phi_n]^t \in \Hom(P,A^n)$ with $(\phi_i(p_j))=
N= $ diagonal $(1,s,\ldots,s)$. We have \\

$\Phi(\Delta)(f,f_1p_1+\ldots+ f_rp_r)= (Id
+pTN\phi)(f,f_1p_1+\ldots+ f_rp_r)$

$=(f,f_1p_1+\ldots+ f_rp_r)+pTN(f,f_1s,\ldots,f_rs)^t$

$ =
p(f,f_1,\ldots, f_r)^t+pT(f_0,f_1s^2,\ldots,f_rs^2)^t$

$=p(1+TN^2)(f,f_1,\ldots,f_r)^t = p(1,0,\ldots,0)^t = (1,0).$\\
This proves the result. $\hfill \square$
\end{proof}

\begin{corollary}\label{q4}
Let $A$ be a ring of dimension $d$ and let $P$ be a projective
$A$-module of rank $d$. Choose $s\in A$ satisfying the properties of
(\ref{3.1}). Assume that $R^r$ is cancellative, where
$R=A[X]/(X^2-s^2X)$. Then $P$ is cancellative.
\end{corollary}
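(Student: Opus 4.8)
The plan is to deduce Corollary~\ref{q4} directly from Lemma~\ref{q3} together with the classical cancellation machinery. Since $P$ has rank $d = \dim A$, we are exactly at the boundary case where Bass' theorem (\ref{bass}) does not apply, so we must work harder. Suppose $P \op A \iso Q \op A$; equivalently, there is a unimodular element $(f,q) \in \Um(A\op P)$ such that $(A\op P)/(f,q)(A\op P) \iso Q$, and we want to show $Q \iso P$, i.e. that $(f,q)$ is in the same $\Aut(A\op P)$-orbit as $(1,0)$.

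First I would reduce to the situation covered by Lemma~\ref{q3}, namely $(f,q) \in \Um^1(A\op P, s^2A)$. The standard trick: choose $s$ as in Lemma~\ref{3.1}, so that $P_s$ is free and $s$ behaves like a non-zero-divisor modulo nilpotents. Over the localization $A_s$, the module $P_s$ is free of rank $d$, and since $\dim A_s \leq d$ one can use (\ref{bass}) (or an elementary argument) to move $(f,q)$ by an automorphism so that its image in $\Um(A_s \op P_s)$ becomes $(1,0)$; patching this with the identity over $A/s^nA$ for suitable $n$ lets one replace $(f,q)$ by an element of $\Um^1(A\op P, s^2A)$ without changing its orbit. (This is the usual ``analytic isomorphism / Quillen patching'' style of argument; one chiefly needs that the two automorphisms agree over $A_s[1/\text{something}]$ or that one can be chosen trivial, which is where the freeness of $P_s$ enters.)

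Once $(f,q) \in \Um^1(A\op P, s^2A)$, Lemma~\ref{q3} applies verbatim, using the hypothesis that $R^d$ is cancellative with $R = A[X]/(X^2 - s^2X)$: it tells us that $\Aut(A\op P, sA)$ acts transitively on $\Um^1(A\op P, s^2A)$, so there is an automorphism carrying $(f,q)$ to $(1,0)$. Hence $Q \iso (A\op P)/(1,0)(A\op P) \iso P$, which is precisely cancellativity of $P$.

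The main obstacle is the reduction step: getting from an arbitrary unimodular $(f,q) \in \Um(A\op P)$ to one lying in $\Um^1(A\op P, s^2A)$ while staying in the same orbit. This requires using the full strength of Lemma~\ref{3.1} — in particular properties (iv) and (v) about $s$ being a non-zero-divisor on $A_{red}$ and $(0:sA) = (0:s^2A)$ — to guarantee that the local trivialization over $A_s$ and the trivial automorphism over $A/s^2A$ can be glued back into an honest automorphism of $A\op P$. Everything after that is a direct invocation of Lemma~\ref{q3}. (In fact this corollary is essentially the engine behind Theorem~\ref{m1}: given that $R^d$ is cancellative for every finite extension $R$ of $A$, one notes that $A[X]/(X^2-s^2X)$ is a finite extension of $A$ — being integral and finitely generated by Proposition~\ref{cartesian1} and Corollary~\ref{cartesian} — so the hypothesis of Corollary~\ref{q4} is met and cancellativity of arbitrary rank-$d$ projectives follows.)
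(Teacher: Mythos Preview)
Your overall strategy --- reduce an arbitrary $(f,q)\in\Um(A\op P)$ to an element of $\Um^1(A\op P,s^2A)$ and then invoke Lemma~\ref{q3} --- is exactly the paper's plan. The gap is in the reduction step. You try to work over the localization $A_s$, arguing that $P_s$ is free of rank $d$ and $\dim A_s\le d$, so ``one can use (\ref{bass})''. But Bass' theorem needs rank $P>\dim A$, and localizing at a non-zero-divisor does \emph{not} drop the dimension: if $A$ is, say, a domain of dimension $d$, then $\dim A_s=d$ as well. So you are still in the borderline case over $A_s$, and neither (\ref{bass}) nor any ``elementary argument'' is available --- knowing that $(f,q)$ can be moved to $(1,0)$ over $A_s$ is precisely the content of $A_s^d$ being cancellative, which is what you are trying to prove. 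The subsequent ``patching with the identity over $A/s^nA$'' is also not a well-defined operation on automorphisms without further input (automorphisms do not glue like modules).

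The paper's reduction goes the other way: pass to the \emph{quotient} $A/s^2A$ rather than the localization. After replacing $A$ by $A_{\mathrm{red}}$, property (iv) of (\ref{3.1}) makes $s$ a non-zero-divisor, so $\dim A/s^2A\le d-1<d=\mathrm{rank}\,\ol P$, and now (\ref{bass}) genuinely applies: there is $\theta\in E(\ol A\op\ol P)$ with $\theta(\ol f,\ol q)=(1,0)$. By (\ref{trans}) (Bhatwadekar--Roy), $\theta$ lifts to $\Theta\in\Aut(A\op P)$, and $\Theta(f,q)\in\Um^1(A\op P,s^2A)$. Lemma~\ref{q3} then finishes exactly as you describe. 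So the fix is simply to replace your localization-and-patching paragraph with this quotient-and-lift argument; everything else you wrote (including the remark linking the corollary to Theorem~\ref{m1}) is correct.
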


\begin{proof}
We may assume that $A$ is reduced. By (\ref{bass}), $A\op P$ is
cancellative, hence, we need to show that $\Aut(A\op P)$ acts
transitively on $\Um(A\op P)$. Let $(f,q)\in \Um(A\op P)$. Since $s$
is a non-zero-divisor, $\dim A/s^2 < \dim A$. Hence, by (\ref{bass}),
there exists $\theta \in E(\ol A\op \ol P)$ such that $\theta (\ol
f,\ol q)=(1,0)$, where ``bar'' denotes reduction modulo $(s^2)$. By
(\ref{trans}), $\theta$ can be lifted to $\Theta\in \Aut(A\op P)$ and
$\Theta(f,q)\in \Um^1(A\op P, s^2A)$.  By (\ref{q3}), there exists
$\Theta_1\in \Aut(A\op P)$ such that $\Theta_1\Theta(f,q)=(1,0)$. This
proves the result.  $\hfill \square$
\end{proof}

As a consequence of above result, we prove our first main result.

\begin{theorem}\label{q5}
Let $A$ be a ring of dimension $d$. Assume that for every finite
extension $R$ of $A$, $R^d$ is cancellative. Then every projective
$A$-module of rank $d$ is cancellative.
\end{theorem}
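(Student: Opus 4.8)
The plan is to reduce Theorem \ref{q5} to the previously established Corollary \ref{q4}. Let $P$ be a projective $A$-module of rank $d$. By Lemma \ref{3.1}, choose $s \in A$ satisfying properties $(i)$--$(v)$; in particular $P_s$ is free and the image of $s$ in $A_{red}$ is a non-zero-divisor. By Corollary \ref{q4}, it suffices to show that $R^d$ is cancellative, where $R = A[X]/(X^2 - s^2 X)$. So the entire content of the proof is to verify that $R$ falls under the hypothesis of the theorem, i.e. that $R$ is a finite extension of $A$.

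First I would observe that $R = A[X]/(X^2 - s^2 X)$ is exactly the cartesian square of $(A, A)$ over $A/s^2 A$, by Corollary \ref{cartesian} applied with $s$ replaced by $s^2$. By Proposition \ref{cartesian1} (with $J = s^2 A$), $R$ is a finitely generated $A$-algebra of dimension $d = \dim A$, and moreover the proof of that proposition shows $R$ is integral over $A$ (the generator $(0, s^2)$ of $R \cong A \op s^2A$ satisfies $(0,s^2)^2 - (s^2,0)(0,s^2) = 0$). A finitely generated algebra that is integral over $A$ is a finite $A$-module, hence $R$ is a finite extension of $A$ in the required sense. Therefore the hypothesis gives that $R^d$ is cancellative, and Corollary \ref{q4} applies to conclude that $P$ is cancellative.

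There is one wrinkle worth addressing, matching the reductions already made in Lemma \ref{q3} and Corollary \ref{q4}: one should first reduce to the case that $A$ is reduced, since cancellation of $P$ over $A$ is equivalent to cancellation of $P/\mathrm{nil}(A)P$ over $A_{red}$ (nilpotents do not affect the relevant $K_0$/unimodular-element computations, as $\mathrm{Spec}$ is unchanged and projective modules lift). After this reduction $s$ is a genuine non-zero-divisor, so $\dim A/s^2A < \dim A$, which is what Corollary \ref{q4} needs. The only mild subtlety is making sure that "finite extension" in the theorem statement is being used in the sense of "module-finite $A$-algebra" rather than "field extension" — but the remark after Theorem \ref{m1} about finite extensions of affine $k$-algebras being affine confirms this is the intended reading, and $R$ being module-finite over $A$ is exactly what we verified.

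I do not expect a serious obstacle here: the hard work has already been front-loaded into Proposition \ref{cartesian1}, Lemma \ref{q3}, and Corollary \ref{q4}. The only thing to be careful about is the bookkeeping that the ring $R = A[X]/(X^2 - s^2X)$ produced from Lemma \ref{3.1}'s element $s$ is simultaneously (a) finite over $A$, so that the hypothesis "$R^d$ cancellative" is available, and (b) of the exact form required by Corollary \ref{q4}. Both are immediate from the cartesian-square description, so the proof is essentially a three-line citation chain: pick $s$ via Lemma \ref{3.1}, note $R = A[X]/(X^2 - s^2X)$ is a finite extension of $A$ via Proposition \ref{cartesian1} and Corollary \ref{cartesian}, hence $R^d$ is cancellative by hypothesis, hence $P$ is cancellative by Corollary \ref{q4}.
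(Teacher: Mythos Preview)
Your proposal is correct and follows essentially the same approach as the paper: choose $s$ via Lemma~\ref{3.1}, observe that $R = A[X]/(X^2 - s^2X)$ is a finite extension of $A$ so that the hypothesis yields $R^d$ cancellative, and conclude via Corollary~\ref{q4}. The only remark is that your detour through Proposition~\ref{cartesian1} and Corollary~\ref{cartesian} to establish finiteness is unnecessary---$A[X]/(X^2 - s^2X)$ is visibly a free $A$-module of rank~$2$ with basis $\{1,X\}$, which is all that is needed.
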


\begin{proof}
Let $P$ be a projective $A$-module of rank $d$. Choose $s\in A$
satisfying the properties of $(\ref{3.1})$. If $R=A[X]/(X^2-s^2X)$,
then $R$ is finite extension of $A$ and hence $R^d$ is
cancellative. By (\ref{q4}), $P$ is cancellative. $\hfill \square$
\end{proof}

\begin{lemma}\label{q6}
Let $A$ be an affine algebra of dimension $d\geq 4$ over $\ol
\BF_p$. Let $P$ be a projective $A$-module of rank $d-1$. Choose $s\in
A$ satisfying the properties of $(\ref{3.1})$. Assume that $R^{d-1}$
is cancellative, where $R=A[X]/(X^2-s^2X)$. Then $P$ is cancellative.
\end{lemma}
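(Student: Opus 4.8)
The plan is to mimic the argument of Corollary \ref{q4}, but since $P$ has rank $d-1$ rather than $d$, I cannot simply invoke Bass' theorem (\ref{bass}) to get transitivity of $\Aut(A\op P)$ on $\Um(A\op P)$; instead I must exploit the special structure of affine algebras over $\ol\BF_p$. First I would reduce to the case $A$ reduced. Since $P$ has rank $d-1 = \dim A - 1$, Serre's theorem (\ref{serre1}) does not directly apply to $A\op P$ of rank $d$, but $A\op P$ does have rank $d > \dim A - 1$... more carefully: I want to show $A\op P$ is cancellative and that $\Aut(A\op P)$ acts transitively on $\Um(A\op P)$. The key input available is Theorem \ref{kumar}, which gives $\mathrm{psr}(A)\leq d$ and hence, since $\dim A = d$, also $\mathrm{psr}$ bounds for proper quotients; combined with Theorem \ref{svv} giving $\mathrm{sr}(A)\leq \max(2,d) = d$.

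The core of the argument follows the two-step pattern of Corollary \ref{q4}. Let $(f,q)\in\Um(A\op P)$. Since $s$ is a non-zero-divisor modulo nilpotents, $\dim A/s^2A \leq d-1$, so $P$ modulo $s^2A$ has rank $d-1 \geq \dim(A/s^2A)$, hence $A/s^2A \op P/s^2P$ has rank $d > \dim(A/s^2A)$; by Bass (\ref{bass}) applied to the ring $A/s^2A$ of dimension $\leq d-1$, there is $\bar\theta\in E(\ol A\op\ol P)$ carrying $(\bar f,\bar q)$ to $(1,0)$. Lifting via Proposition \ref{trans} to $\Theta\in\Aut(A\op P)$, we get $\Theta(f,q)\in\Um^1(A\op P, s^2A)$. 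Now I invoke Lemma \ref{q3}: since $R = A[X]/(X^2-s^2X)$ has $R^{d-1}$ cancellative by hypothesis, and $P$ has rank $d-1$, the group $\Aut(A\op P, sA)$ acts transitively on $\Um^1(A\op P, s^2A)$, so some $\Theta_1$ sends $\Theta(f,q)$ to $(1,0)$. This shows $\Aut(A\op P)$ acts transitively on $\Um(A\op P)$.

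It remains to deduce cancellativity of $P$ itself from transitivity on $\Um(A\op P)$. This is where I expect the main obstacle, and where the hypothesis $d\geq 4$ and the field $\ol\BF_p$ must enter beyond what Lemma \ref{q3} uses. Given $P\op A\iso Q\op A$, one gets a unimodular element of $A\op P$ whose quotient is $Q$; transitivity of $\Aut(A\op P)$ on $\Um(A\op P)$ then identifies $Q$ with $P$. But to even set this up I need $Q$ (equivalently $P$) to have a unimodular element or at least need the map $\Um(P)\to\Um(A\op P)/\!\!\sim$ to behave well — for rank $d-1$ modules over a $d$-dimensional ring this requires the affine-$\ol\BF_p$ structure. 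Here I would use Theorem \ref{kumar} ($\mathrm{psr}(A)\leq d$) together with Theorem \ref{svv}: given $(a,p)\in\Um(A\op P)$ with $P$ of rank $d-1$, after the reduction above we may assume $p\in JP$; then $\mathrm{psr}$ and $\mathrm{sr}$ bounds let one transform $(a,p)$ within $\Um(A\op P)$ so that the resulting element is elementarily equivalent to $(1,0)$, which combined with the transitivity just established yields $A\op P/(a,p)A\iso P$ for all such $(a,p)$ — precisely the cancellation property. The delicate point is checking that the stable-range manipulations stay inside $\Aut(A\op P)$ rather than merely $\GL$, which is why one works with $E(A\op P)$ and the explicit transvections $\Delta_q,\Gamma_\alpha$ of the preliminaries; I would carry out this bookkeeping using Proposition \ref{w001} exactly as in the proof of Lemma \ref{q3}.
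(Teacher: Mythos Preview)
Your reduction modulo $s^2$ is the place where the argument breaks. You write that $\ol A\op\ol P$ has rank $d>\dim(A/s^2A)$ and then invoke Bass (\ref{bass}); but Bass, as stated, requires $\operatorname{rank}\ol P>\dim\ol A$, not $\operatorname{rank}(\ol A\op\ol P)>\dim\ol A$. Here $\operatorname{rank}\ol P=d-1$ while $\dim A/s^2A$ is in general exactly $d-1$, so Bass does not give transitivity of $E(\ol A\op\ol P)$ on $\Um(\ol A\op\ol P)$. This is precisely the spot where the paper uses the $\ol\BF_p$ hypothesis and the condition $d\ge 4$: since $\ol A$ is an affine $\ol\BF_p$-algebra of dimension $d-1\ge 3$, Theorem~\ref{kumar} gives $\mathrm{psr}(\ol A)\le d-1$, so for $(\ol a,\ol p)\in\Um(\ol A\op\ol P)$ one can add a multiple of $\ol a$ to $\ol p$ to make $\ol p$ unimodular in $\ol P$, and then an explicit sequence of transvections $\Delta_q,\Gamma_\alpha$ in $E(\ol A\op\ol P)$ carries $(\ol a,\ol p)$ to $(1,0)$. (The bound $d-1\ge 3$ is what lets you avoid the regularity hypothesis in \ref{kumar}.) You mention \ref{kumar} but deploy it in the wrong step.

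Conversely, the final step you worry about is easier than you think. Once $\Aut(A\op P)$ acts transitively on $\Um(A\op P)$, you only need to know that $A\op P$ itself is cancellative to conclude that $P$ is; and $A\op P$ has rank $d=\dim A$, so Suslin's theorem (\ref{z1}(i)) for affine algebras over an algebraically closed field applies directly. No stable-range manipulations or appeals to \ref{w001} are needed here. So the special hypotheses enter in the reduction-mod-$s^2$ step via \ref{kumar}, and in the outer cancellation via \ref{z1}(i), not in the transitivity argument of Lemma~\ref{q3}, which goes through verbatim once you feed it the hypothesis that $R^{d-1}$ is cancellative.
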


\begin{proof}
We can assume that $A$ is reduced and hence $s$ is a non-zero-divisor.
Since, by Suslin's result (\ref{z1}), every projective $A$-module
of rank $d$ is cancellative, it is enough to show that $\Aut(A\op P)$
acts transitively on $\Um(A\op P)$.

Let $(a,p)\in \Um(A\op P )$.  Let ``bar'' denotes reduction modulo
$s^2A$. Then $\dim \ol A=d-1 \geq 3$.  By (\ref{kumar}), there exists $q\in
P$ such that $\ol p+\ol{aq}\in \Um(\ol P)$. Hence there exists $\ol
\sigma \in E(\ol A\op \ol P)$ such that $\ol \sigma(\ol a,\ol
p)=(1,0)$. Lifting $\ol \sigma$ to $\sigma \in \Aut (A\op P)$ and
replacing $(a,p)$ by $\sigma(a,p)$, we may assume that $(a,p)\in
\Um^1(A\op P,s^2A)$.  By (\ref{q3}), there exists $\Delta \in \Aut
(A\op P)$ such that $\Delta(a,p)=(1,0)$. This proves the result.
$\hfill \square$
\end{proof}

\begin{theorem}\label{d-1}
Let $A$ be an affine algebra of dimension $d\geq 4$ over $\ol
\BF_p$. Assume that if $R$ is a finite extension of $A$, then
$R^{d-1}$ is cancellative. Then every projective $A$-module of rank
$d-1$ is cancellative.
\end{theorem}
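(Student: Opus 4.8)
The plan is to mimic the proof of Theorem~\ref{q5}, replacing the input ``$R^d$ is cancellative'' by ``$R^{d-1}$ is cancellative'' and the reference to Corollary~\ref{q4} by the reference to Lemma~\ref{q6}. Let $P$ be a projective $A$-module of rank $d-1$. First I would invoke Lemma~\ref{3.1} to choose $s\in A$ satisfying its five properties; in particular $P_s$ is free and, after passing to $A_{red}$ (which does not change the cancellation question since projective modules lift), the image of $s$ is a non-zero-divisor. Then set $R=A[X]/(X^2-s^2X)$.

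The key observation is that $R$ is a finite extension of $A$: by Corollary~\ref{cartesian}, $R$ is exactly the cartesian square of $(A,A)$ over $A/s^2A$, and by Proposition~\ref{cartesian1} it is module-finite over $A$ (indeed $R\cong A\oplus s^2A$ as an $A$-algebra with $(0,a_i)$ integral over $A$), hence a finite extension of $A$. Also, since $A$ is an affine algebra over $\ol\BF_p$, so is $R$ (the ``in particular'' clause of Proposition~\ref{cartesian1}). Therefore, by the hypothesis of the theorem applied to the finite extension $R$, $R^{d-1}$ is cancellative. Now all the hypotheses of Lemma~\ref{q6} are met: $A$ is an affine $\ol\BF_p$-algebra of dimension $d\geq 4$, $P$ has rank $d-1$, $s$ satisfies~(\ref{3.1}), and $R^{d-1}$ is cancellative. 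Hence Lemma~\ref{q6} yields that $P$ is cancellative, which is the desired conclusion.

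I do not anticipate a genuine obstacle here, since all the real work has been front-loaded into Lemma~\ref{q6} (which in turn rests on Lemma~\ref{q3}, and on Suslin's cancellation theorem~\ref{z1}$(i)$ and the estimate $\mathrm{psr}(\ol A)\leq d-1$ from Theorem~\ref{kumar}, the latter needing $\dim\ol A = d-1\geq 3$, i.e.\ $d\geq 4$). The only point requiring a little care is the reduction to the reduced case: one must check that replacing $A$ by $A_{red}$ is harmless for the statement, which is standard because the restriction map on isomorphism classes of projective modules and on automorphism groups behaves well modulo the nilradical, and $s$ can be chosen so that its image in $A_{red}$ is a non-zero-divisor as in~(\ref{3.1})$(iv)$. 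If I wanted to be maximally self-contained I would also remark explicitly why $d\geq 4$ (rather than $d\geq 3$) is needed: it is precisely so that $\dim A/s^2A = d-1$ is at least $3$, which is the hypothesis under which Theorem~\ref{kumar} guarantees $\mathrm{psr}\leq d-1$ for the (possibly singular) affine $\ol\BF_p$-algebra $\ol A$.

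In summary, the proof is the single line: choose $s$ as in~(\ref{3.1}), observe $R=A[X]/(X^2-s^2X)$ is a finite extension of $A$ so that $R^{d-1}$ is cancellative by hypothesis, and apply Lemma~\ref{q6}.
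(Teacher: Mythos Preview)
Your proposal is correct and follows exactly the paper's own proof: choose $s$ as in Lemma~\ref{3.1}, note that $R=A[X]/(X^2-s^2X)$ is a finite extension of $A$ so $R^{d-1}$ is cancellative by hypothesis, and then invoke Lemma~\ref{q6}. The additional commentary you give (on the reduction to $A_{red}$ and on the role of $d\geq 4$ via Theorem~\ref{kumar}) is accurate and simply makes explicit what the paper leaves implicit inside Lemma~\ref{q6}.
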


\begin{proof}
Let $P$ be a projective $A$-module of rank $d-1$. Choose $s\in A$
satisfying the properties of (\ref{3.1}). Since $R=A[X]/(X^2-s^2X)$ is
a finite extension of $A$, $R^{d-1}$ is cancellative, by
hypothesis. Applying (\ref{q6}), $P$ is cancellative. $\hfill \square$
\end{proof}

\begin{proposition}\label{done}
Let $A$ be a ring and let $P$ be a projective $A$-module of rank
$r$. Choose $s\in A$ satisfying the properties of (\ref{3.1}). If
$\GL_{r+1}(A,s^2A)$ acts transitively on $\Um_{r+1}(A,s^2A)$,
then $\Aut(A\op P,sA)$ acts transitively on $\Um^1(A\op P,s^2A)$.
\end{proposition}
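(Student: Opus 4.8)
The plan is to imitate the proof of (\ref{q3}) essentially line for line; the only step that changes is the way one produces the matrix $\Delta\in\GL_{r+1}(A,s^2A)$ that trivialises the relevant unimodular row. In (\ref{q3}) this $\Delta$ was manufactured by patching unimodular rows over the cartesian square defining $R=A[X]/(X^2-s^2X)$ and invoking cancellation of $R^r$; here it will be handed to us directly by the hypothesis on $\GL_{r+1}(A,s^2A)$, so no cartesian square is needed and the input is ``cheaper''.

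As in (\ref{q3}) I would first reduce to the case that $A$ is reduced, so that $s$ becomes a non-zero-divisor (this preserves the properties of $s$ coming from (\ref{3.1})). Let $(f,q)\in\Um^1(A\op P,s^2A)$, say $f=1+s^2a$. Using $\Delta$-type transvections exactly as in (\ref{q3}) I would modify $q$ so that $q\in s^3P$; then, since $sP\subset p_1A+\cdots+p_rA$, I would write $q=f_1p_1+\cdots+f_rp_r$ with every $f_i\in s^2A$. Pairing $(f,q)$ with a splitting $(\lambda,\phi)\in(A\op P)^\ast$ and using $(\phi_i(p_j))=\diag(s,\ldots,s)$ shows that $(f,f_1,\ldots,f_r)\in\Um_{r+1}(A,s^2A)$.

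Now comes the single different step. Applying the hypothesis to the two elements $(f,f_1,\ldots,f_r)$ and $(1,0,\ldots,0)$ of $\Um_{r+1}(A,s^2A)$ yields $\Delta\in\GL_{r+1}(A,s^2A)$ with $(f,f_1,\ldots,f_r)\Delta=(1,0,\ldots,0)$. Since $\Delta\equiv 1_{r+1}\pmod{s^2A}$ and $s$ is a non-zero-divisor, I can write $\Delta=1_{r+1}+TN^2$ with $N=\diag(1,s,\ldots,s)$ and $T$ an honest $(r+1)\times(r+1)$ matrix over $A$ (keep the first column of $\Delta-1_{r+1}$ as the first column of $T$, and divide the remaining columns by $s^2$). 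Feeding this into (\ref{w001}) with $n=r+1$ and $(s_1,\ldots,s_n)=(1,s,\ldots,s)$ --- the conditions $(0:s_i)=(0:s_i^2)$ holding trivially because $A$ is reduced --- produces the automorphism $\Phi(\Delta)=Id_{A\op P}+pTN\phi\in\Aut(A\op P,sA)$, where $p=[p_0,p_1,\ldots,p_r]\in\Hom(A^{r+1},A\op P)$ and $\phi=[\phi_0,\ldots,\phi_r]^t\in\Hom(A\op P,A^{r+1})$ are built from the projection $A\op P\ra A$ together with the data $p_i,\phi_i$ of (\ref{3.1}), so that $(\phi_i(p_j))=N$. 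The identical computation to (\ref{q3}), using $\phi(f,q)=N(f,f_1,\ldots,f_r)^t$ and $(f,q)=p\,(f,f_1,\ldots,f_r)^t$, then gives
$$\Phi(\Delta)(f,q)=p\,(1_{r+1}+TN^2)\,(f,f_1,\ldots,f_r)^t=p\,\Delta\,(f,f_1,\ldots,f_r)^t=p\,(1,0,\ldots,0)^t=(1,0),$$
which proves the asserted transitivity.

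I do not expect a genuine obstacle here: all the machinery --- Bass-type stability, (\ref{w001}), the properties of $s$ --- is already in place, and the argument is honestly just (\ref{q3}) with a shortcut. The only points deserving a moment of care are the reduction to $q\in s^3P$ (handled verbatim as in (\ref{q3})) and the claim that $\Delta-1_{r+1}$ can be written as $TN^2$ for a matrix $T$ over $A$; the latter is exactly where one uses that $s$ is a non-zero-divisor after killing nilpotents, i.e.\ property (iv) of (\ref{3.1}).
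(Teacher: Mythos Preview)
Your proposal is correct and follows essentially the same route as the paper's own proof: reduce $q$ into $s^3P$, extract a unimodular row $(f,f_1,\ldots,f_r)\in\Um_{r+1}(A,s^2A)$, use the hypothesis to obtain $\Delta\in\GL_{r+1}(A,s^2A)$ trivialising it, write $\Delta=1+TN^2$, and push it through (\ref{w001}) exactly as in (\ref{q3}). The only cosmetic difference is that you first pass to $A_{\mathrm{red}}$ to make $s$ a non-zero-divisor before writing $\Delta-1=TN^2$; the paper omits this step since the entries of $\Delta-1$ already lie in $s^2A$ and one only needs condition (v) of (\ref{3.1}), namely $(0:sA)=(0:s^2A)$, for (\ref{w001}) to apply --- but your reduction is harmless.
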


\begin{proof}
Let $(a,p)\in \Um^1(A\op P,s^2A)$.  Since $a=1$ modulo $s^2A$, adding
some multiple of $a$ to $p$, we may assume that $p\in s^3P$.  Since,
by (\ref{3.1}), $sP\subset p_1A+\ldots+p_rA$, we get
$p=a_1p_1+\ldots+a_r p_r$ for some $a_i\in s^2A$, $i=1,\ldots,r$.
Note that $(a,a_1,\ldots,a_r)\in \Um_{r+1}(A,s^2A)$.  By assumption,
there exists $\Delta\in \GL_{r+1}(A,s^2A)$ such that $\Delta
(a,a_1,\ldots,a_r)=(1,0,\ldots,0)$.

Let $\Delta=1+TN^2$, where $T$ is some matrix and $N=$ diagonal
$(1,s,\ldots,s)$. Applying $(\ref{w001})$ with $n=r+1$ and
$(s_1,\ldots,s_n)=(1,s,\ldots,s)$, we get $\Phi(\Delta) =Id + pTN
\phi\in \Aut (A\op P)$. It is easy to see that $\Phi(\Delta)\in \Aut
(A\op P,sA)$. Further, as in the proof of (\ref{q3}), we can see that
$\Phi(\Delta)(a,p)=(1,0)$. This proves the result.  $\hfill \square$
\end{proof}

The following result generalizes (\cite{su}, Corollary 17.3).

\begin{theorem}\label{a1}
Let $k\subset \ol \BF_p$ be a field and let $A$ be an affine algebra
over $k$ of dimension $d \geq 2$. Then every projective $A$-module of
rank $d$ is cancellative.
\end{theorem}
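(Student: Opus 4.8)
The plan is to reduce the cancellation statement to a transitivity statement about unimodular rows, and then invoke Theorem~\ref{svv} (Suslin--Vaserstein) together with the cartesian-square machinery of Lemma~\ref{q3} / Proposition~\ref{done}. Let $P$ be a projective $A$-module of rank $d$; we must show $\Aut(A\op P)$ acts transitively on $\Um(A\op P)$, since $A\op P$ is cancellative by Bass (\ref{bass}). As usual we may assume $A$ is reduced. Choose $s\in A$ satisfying the properties of Lemma~\ref{3.1}, so that $s$ is a non-zero-divisor and $\dim A/s^2A \le d-1$.

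First I would handle the reduction modulo $s^2A$. Given $(a,p)\in\Um(A\op P)$, the ring $\ol A = A/s^2A$ is an affine algebra over $k\subset\ol\BF_p$ of dimension $\le d-1$, so by Theorem~\ref{svv} we have $\mathrm{sr}(\ol A)\le\max(2,d-1)$; combined with Bass' theorem (\ref{bass}) applied over $\ol A$ (rank $P = d > \dim\ol A$), we get $\ol\sigma\in E(\ol A\op\ol P)$ with $\ol\sigma(\ol a,\ol p)=(1,0)$. By Proposition~\ref{trans} this lifts to $\sigma\in\Aut(A\op P)$, and after replacing $(a,p)$ by $\sigma(a,p)$ we may assume $(a,p)\in\Um^1(A\op P, s^2A)$.

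Next I would produce the required automorphism via the cartesian square. The key point is that $\mathrm{sr}(A)\le\max(2,d)$ by Theorem~\ref{svv}, hence $\GL_{d+1}(A)$ — and in fact $\GL_{d+1}(A,s^2A)$ by the relative version of stable range estimates — acts transitively on $\Um_{d+1}(A,s^2A)$. This is exactly the hypothesis of Proposition~\ref{done} with $r=d$, so $\Aut(A\op P, sA)$ acts transitively on $\Um^1(A\op P, s^2A)$; in particular there is $\Delta\in\Aut(A\op P)$ with $\Delta(a,p)=(1,0)$. This finishes the argument. Alternatively, one can bypass the relative stable range subtlety entirely by using Lemma~\ref{q3} directly: the ring $R = A[X]/(X^2-s^2X)$ is, by Corollary~\ref{cartesian}, the cartesian square of $(A,A)$ over $A/s^2A$, and by Proposition~\ref{cartesian1} it is again an affine $k$-algebra of dimension $d$; hence $R^d$ is cancellative by Theorem~\ref{svv}, and Lemma~\ref{q3} gives transitivity of $\Aut(A\op P,sA)$ on $\Um^1(A\op P, s^2A)$.

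The main obstacle is the passage from stable range for free modules to transitivity of $\GL_{d+1}(A,s^2A)$ on the \emph{relative} unimodular rows $\Um_{d+1}(A,s^2A)$ — one needs the relative form of the Suslin--Vaserstein estimate, or else one must route through the cartesian-square construction as in the second approach, where the burden is instead the (already established) facts that $R=A[X]/(X^2-s^2X)$ is affine of dimension $d$ over $k$ and that $R^d$ is cancellative. Either way the core new input over the known $\ol\BF_p$-results is precisely the observation that finite extensions (here, the explicit cartesian-square extension) stay within the class of affine algebras over $k\subset\ol\BF_p$, which is what lets us apply \ref{svv} to $R$ rather than only to $A$.
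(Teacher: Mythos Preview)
Your proposal is correct and, in its second (cartesian-square) variant, is exactly the paper's argument unfolded: the paper simply notes that any finite extension of $A$ is again an affine $k$-algebra of dimension $d$, applies (\ref{svv}) to conclude $R^d$ is cancellative for all such $R$, and then invokes the packaged Theorem~\ref{q5} (whose proof is precisely the chain (\ref{q4})$\to$(\ref{q3}) that you wrote out by hand). Your first variant via Proposition~\ref{done} and a relative stable-range estimate is a legitimate alternative, but as you yourself observe it requires justifying transitivity of $\GL_{d+1}(A,s^2A)$ on $\Um_{d+1}(A,s^2A)$, which is exactly the step the cartesian-square trick is designed to sidestep; also note that in your reduction modulo $s^2A$ the appeal to (\ref{svv}) for $\ol A$ is unnecessary, since Bass' theorem (\ref{bass}) alone already gives the needed transitivity once $\dim\ol A<d=\operatorname{rank}P$.
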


\begin{proof}
By Suslin-Vaserstein result (\cite{su}, Corollary 17.3), sr$(A)\leq
d$. Hence every stably free $A$-module of rank $d$ is free, i.e. $A^d$
is cancellative. If $B$ is a finite extension of $A$, then $B$ is also
affine $k$-algebra of dimension $d$ and hence $B^d$ is also
cancellative. By (\ref{q5}), the result follows.
$\hfill \square$
\end{proof}

\begin{remark}\label{c1}
Let $k$ be a field and let $A$ be an affine $k$-algebra of dimension
$d$. Assume that characteristic of $k$ is either $0$ or $p>d$. Further
assume that $cd (k)\leq 1$, where ``cd'' stands for cohomological
dimension \cite{Serre68}. Then $A^{d}$ is cancellative (Suslin's
result). The proof of this result is contained in \cite{suslin} (see
\cite{murthy00}, 2.1 - 2.4).

In particular, if $A$ is an affine $k$-algebra of dimension $d$, where
$k$ is a $C_1$-field of characteristic $0$ or $p>d$. Then $A^d$ is
cancellative. Note that we do not need $k$ to be perfect in (\ref{z1}
(ii)). By (\ref{q4}), we get Bhatwadekar's result (\ref{z1}(ii)) that
every projective $A$-module of rank $d$ is cancellative.
\end{remark}


\section{Over algebraically closed fields}

In this section, $k$ will denote an algebraically closed field.

\begin{proposition}\label{1.3}
Let $R$ be an affine $k$-algebra of dimension $d$ 
with $1/d!\in k$. Let $f(T)\in R[T]$ be a monic
polynomial. Assume that either

$(i)$ $A=R[T,1/f(T)]$ or

$(ii)$ $A=R[T,f_1/f,\ldots,f_r/f]$, where $f,f_1,\ldots,f_r$ is a
regular sequence in $R[T]$.

Then $A^{d}$ is cancellative.
\end{proposition}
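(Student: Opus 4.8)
The plan is to push the problem down to the $d$-dimensional base ring $R$ and then invoke Suslin's cancellation theorem (\ref{z1}(i)) together with Wiemers' result (\ref{w002}). Write $S=R[T]$, so $S$ is an affine $k$-algebra of dimension $d+1$ with $1/((d+1)-1)!=1/d!\in k$, and note that in both cases $S\subseteq A$ and $A_f=S_f$. Since ``$A^d$ is cancellative'' means precisely that every stably free $A$-module of rank $d$ is free, I would fix such a $P$. First dispose of $d\le 1$: $A^0=0$ is trivially cancellative, and a stably free module of rank $1$ is free by comparing top exterior powers; so assume $d\ge 2$, whence $\dim S\ge 3$. Moreover one may assume $P\op A\iso A^{d+1}=A\op A^d$: any stably free $A$-module of rank $d+1=\dim A$ is free by (\ref{z1}(i)), and further free summands cancel by (\ref{bass}). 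Write $P\iso(A\op A^d)/(a,p)A$ for a suitable $(a,p)\in\Um(A\op A^d)$.

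The main step is to show that $P$ is extended from $S$. In case $(ii)$ this is the extendability theorem (\ref{kes3.5}), applied with base ring $S$ (dimension $d+1\ge 3$, $1/d!\in k$), with $g=f$ and the given $S$-regular sequence $f,f_1,\ldots,f_r$, and with $P'=A^d$, which is projective of rank $(d+1)-1$ and extended from $S$: it gives that $P=(A\op A^d)/(a,p)A$ is extended from $S$. In case $(i)$, $A=S_f$ and one argues identically using the corresponding extendability result for localizations. Either way, write $P\iso P_1\ot_S A$ with $P_1$ projective over $S$ of rank $d$.

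Next I would use that $f$ is \emph{monic}. From $(P_1\op S)\ot_S A\iso P\op A\iso A^{d+1}$, tensoring over $A$ with $A_f=S_f$ gives $(P_1\op S)\ot_S S_f\iso S_f^{\,d+1}$; that is, the projective $S=R[T]$-module $P_1\op S$ becomes free after inverting the monic polynomial $f\in R[T]$. By the Quillen--Suslin theorem (\ref{monic}), applied with base ring $R$ and variable $T$, $P_1\op S$ is free over $S$, i.e. $P_1$ is a stably free $S$-module of rank $d$.

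To finish: $R$ is an affine algebra of dimension $d$ over the algebraically closed field $k$, so every projective $R$-module of rank $d$ is cancellative by Suslin (\ref{z1}(i)); hence, by Wiemers (\ref{w002}) with $n=1$, $m=0$ (so $R[X_1]=S$), every projective $S$-module of rank $d$ is cancellative. In particular $P_1$, being stably free of rank $d$, satisfies $P_1\iso S^d$, and therefore $P\iso P_1\ot_S A\iso A^d$, as desired. I expect the first step to be the essential point: although $\dim A=d+1$, the stably free module $P$ descends to $S=R[T]$ — a ring whose extra dimension is merely that of a polynomial variable — which is exactly the content of (\ref{kes3.5}); once there, the monic-ness of $f$ makes the Quillen--Suslin theorem applicable and Suslin's cancellation over $R$ closes the argument. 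A secondary point needing attention is the uniform treatment of cases $(i)$ and $(ii)$ (and of $d\le 1$), dealt with above.
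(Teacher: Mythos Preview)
Your treatment of case $(ii)$ is correct and essentially identical to the paper's: both apply (\ref{kes3.5}) with base ring $S=R[T]$ to descend $P$ to a projective $S$-module $P_1$, then use that $f$ is monic together with (\ref{monic}) to see that $P_1$ is stably free, and finish with (\ref{w002}) and Suslin's theorem (\ref{z1}(i)) over $R$.

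The gap is in case $(i)$. You write that ``one argues identically using the corresponding extendability result for localizations,'' but no such result is available among the tools quoted in the paper: Theorem (\ref{kes3.5}) requires a genuine regular sequence $g,f_1,\ldots,f_r$, and $A=S_f$ does not fit that format (taking $f_1$ comaximal to $f$ makes $S/(f,f_1)=0$, violating the regular-sequence convention). The analogue you seem to have in mind, (\ref{kes4.4})(ii), is stated only for real closed base fields. So the descent step in case $(i)$ is unjustified as written.

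The paper handles case $(i)$ by a different mechanism that you should note. Since $k$ is algebraically closed, $k(f)$ is a $C_1$-field (Tsen), and $A_{1+fk[f]}$ is an affine algebra of dimension $d$ over $k(f)$; by Suslin's $C_1$ cancellation (see (\ref{c1})) the stably free module $P$ becomes free over $A_{1+fk[f]}$, hence $P_h$ is free for some $h\in 1+fk[f]$. One then patches $P$ over $R[T]_f$ with the free module over $R[T]_h$ (using \cite{murthy00}, Lemma 2.9, which applies since $(f,h)=R[T]$) to produce a projective $R[T]$-module $Q$ with $Q_f\cong P$ and $Q_h$ free. Now the crucial point is that $h\in 1+fk[f]\subset R[T]$ is itself \emph{monic} in $T$, so (\ref{monic}) gives $Q$ free directly, hence $P$ free. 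Thus in case $(i)$ the paper never needs Wiemers' theorem or cancellation over $R$; the monic $h$ does all the work.
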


\begin{proof}
$(i)$ Assume that $A=R[T,1/f(T)]$ and let $P$ be a stably free
$A$-module of rank $d$.  Since $A_{1+fk[f]}$ is an affine domain of
dimension $d$ over a $C_1$-field $k(f)$, by Suslin's
result $(\ref{c1})$, $P\ot A_{1+fk[f]}$ is free. Hence, there exists
$h\in 1+fk[f]$ such that $P_h$ is free. By (\cite{murthy00}, Lemma
2.9), patching $P$ and $(R[T]_h)^d$, we get a projective $R[T]$-module
$Q$ of rank $d$ such that $Q_f \iso P$ and $Q_h$ is free. Since $h\in
R[T]$ is a monic polynomial, by (\ref{monic}), $Q$ is free and hence
$P$ is free.  This proves that $A^d$ is cancellative.  \\

$(ii)$ Assume that $A=R[T,f_1/f,\ldots,f_r/f]$ and let $P$ be a
stably free $A$-module of rank $d$. By (\ref{kes3.5}), there exists a
projective $R[T]$-module $Q$ of rank $d$ such that $P\iso Q\ot
A$. Since $P\op A\iso (Q\ot A)\op A$ is free, hence $(Q\op R[T])\ot
R[T,1/f]$ is free. Since $f$ is a monic polynomial, by (\ref{monic}),
$Q\op R[T]$ is free. By (\ref{w002}), $R[T]^d$ is cancellative.  Hence $Q$
is free and therefore $P$ is free. This proves that $A^d$ is
cancellative. $\hfill \square$
\end{proof}

\begin{lemma}\label{z2}
Let $R$ be a reduced ring of dimension $d$ and $A=R[T,1/f(T)]$ for
some $f(T)\in R[T]$. Let $P$ be a projective $A$-module. Then there
exists a non-zero-divisor $s\in R$ satisfying the properties of
(\ref{3.1}).
\end{lemma}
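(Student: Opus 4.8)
The plan is to reduce the statement to the assertion that $P$ becomes free after inverting a single non-zero-divisor of $R$, and then to recover the remaining data of (\ref{3.1}) by clearing denominators. First note that since $R$ is reduced, so are $R[T]$ and its localization $A=R[T,1/f]$; hence condition (v) of (\ref{3.1}) holds automatically (in a reduced ring $as^2=0$ forces $(as)^2=0$, so $as=0$), and condition (iv) merely asks that $s$ be a non-zero-divisor. Also a non-zero-divisor of $R$ stays a non-zero-divisor in $R[T]$, hence in $A$. So it suffices to produce a non-zero-divisor $s\in R$ with $P_s$ free and with $q_1,\dots,q_r\in P$, $\theta_1,\dots,\theta_r\in\Hom(P,A)$ satisfying $(\theta_i(q_j))=\diag(s,\dots,s)$ and $sP\subset\sum_j q_jA$.

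The key step is to find a non-zero-divisor $s_0\in R$ with $P_{s_0}$ free. Let $S\subset R$ be the multiplicative set of non-zero-divisors, so that $S^{-1}R$ is the total quotient ring of $R$; being Artinian and reduced it is a finite product of fields $K_1\times\cdots\times K_t$. Consequently $S^{-1}A=S^{-1}R[T,1/f]=\prod_i K_i[T,1/f]$, a finite product of localizations of polynomial rings over fields, i.e. a finite product of principal ideal rings. Since $P$ has constant rank $r$, the module $S^{-1}P$ is projective of constant rank $r$ over a principal ideal ring, hence free of rank $r$. Choosing a basis, the isomorphism $(S^{-1}A)^r\iso S^{-1}P$, its inverse (which is given by finitely many elements of $S^{-1}\Hom(P,A)$), and the finitely many equations in $S^{-1}A$ expressing that the two are mutually inverse, all involve only finitely many denominators from $S$; their product $s_0\in S$ then already works over $A_{s_0}$. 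Thus $P_{s_0}$ is free over $A_{s_0}=R_{s_0}[T,1/f]$, and $s_0$ is a non-zero-divisor of $R$.

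It remains to extract (ii) and (iii). Let $e_1,\dots,e_r$ be a basis of $P_{s_0}$ over $A_{s_0}$ and $e_1^*,\dots,e_r^*\in (P_{s_0})^*=(\Hom(P,A))_{s_0}$ the dual basis. Since $s_0$ is a non-zero-divisor of $A$, $P$ is $s_0$-torsion-free (being a submodule of a free module). Hence, writing finitely many generators of $P$ in the basis $\{e_i\}$ and clearing the finitely many denominators that arise, there is $m\geq 0$ (independent of further powers of $s_0$) with $s_0^m e_i^*\in\Hom(P,A)$ for all $i$ and $s_0^{N+m}P\subset \sum_j (s_0^N e_j)A$ for all large $N$; fixing $N$ large enough that also $s_0^N e_i\in P$, put $q_i:=s_0^N e_i$, $\theta_i:=s_0^m e_i^*$, and $s:=s_0^{N+m}\in R$. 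Then $\theta_i(q_j)=s_0^{N+m}\delta_{ij}$, so $(\theta_i(q_j))=\diag(s,\dots,s)$; also $sP\subset\sum_j q_jA$; and $P_s=P_{s_0}$ is free, $s$ is a non-zero-divisor of $R$ hence of $A=A_{red}$, and $(0:sA)=(0:s^2A)$ as $A$ is reduced. So $s$ satisfies all the properties of (\ref{3.1}).

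The only genuine content is the passage in the second paragraph from the total quotient ring to a single non-zero-divisor $s_0$ (generic freeness over a principal ideal ring, then spreading out the isomorphism); everything else is bookkeeping with denominators. The one point needing a word of care is the degenerate case in which $f$ maps to $0$ in some $K_i[T]$, so that the $i$-th factor of $S^{-1}A$ is the zero ring; this is harmless, since the zero ring is a principal ideal ring and $S^{-1}P$ is still free of rank $r$ over the product.
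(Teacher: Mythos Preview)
Your argument is correct and follows the same route as the paper's proof: pass to the total ring of fractions $S^{-1}R$, observe that $S^{-1}A$ is a finite product of localizations $K_i[T,1/f]$ of polynomial rings over fields (hence principal ideal rings), conclude $S^{-1}P$ is free, and then spread out. The paper compresses your second and third paragraphs into a single sentence (``it is easy to see that we can choose $s\in S$''), so you have simply supplied the details the paper omits; your explicit handling of the degenerate factor where $f$ vanishes and of conditions (iv)--(v) via reducedness is a welcome clarification rather than a departure.
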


\begin{proof}
Let $S$ be the set of non-zero-divisors of $R$. Then $S^{-1}R$ is a
direct product of fields. Since $K[T,1/g(T)]$ is a PID for any field $K$
and $g(T)\in K[T]$, every projective $K[T,1/g(T)]$-module is
free. Hence every projective module of constant rank over
$S^{-1}R[T,1/f(T)]$ is free. Now, it is easy to see that we can choose
$s\in S$ satisfying the properties of (\ref{3.1}). $\hfill \square$
\end{proof}

\begin{theorem}\label{1.4}
Let $R$ be a reduced affine $k$-algebra of dimension $d$ with $1/d!\in
k$.  Let $f(T)\in R[T]$ be a monic polynomial and let
$A=R[T,1/f(T)]$. Let $P$ be a projective $A$-module of rank $d$. By
(\ref{z2}), choose a non-zero-divisor $s\in R$ satisfying the
properties of (\ref{3.1}).  Then $\Aut(A\op P)$ acts transitively on
$\Um^1(A\op P, s^2A)$.
\end{theorem}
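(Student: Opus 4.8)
The plan is to reduce the statement to Lemma \ref{q3}, whose hypothesis is that $R^d$ is cancellative for $R=A[X]/(X^2-s^2X)$. Since $s\in R$ is chosen to satisfy (\ref{3.1}) for the $R[T,1/f]$-module $P$, and in particular $s$ is a non-zero-divisor of $R$ (using (\ref{z2})), we have the cartesian square from (\ref{cartesian})
$$\xymatrix{ \wt R:=A[X]/(X^2-s^2X) \ar[r]^-{i_1} \ar[d]_{i_2} & A \ar[d]^{j_1} \\ A \ar[r]_-{j_2} & A/s^2A }$$
so that $\wt R$ is the fibre product $A\times_{A/s^2A} A$. The point is to identify $\wt R$ concretely: since $A=R[T,1/f]$ and $s\in R$, we have $A/s^2A=(R/s^2R)[T,1/\bar f]$ and, more importantly, $\wt R=B[T,1/f]$ where $B=R[X]/(X^2-s^2X)$ is a finite (integral) extension of $R$, hence an affine $k$-algebra of dimension $d$. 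Moreover $f$ stays monic over $B$. So the hypothesis of (\ref{q3}) that $\wt R^{\,d}$ is cancellative is exactly the assertion that $B[T,1/f]^{\,d}$ is cancellative, which is precisely what (\ref{1.3}(i)) gives, applied to the affine $k$-algebra $B$ (which still has $1/d!\in k$ since $k$ is unchanged).

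Concretely, the steps I would carry out are: (1) By (\ref{z2}), $s$ may be taken to be a non-zero-divisor of $R$ satisfying (\ref{3.1}) for $P$; as usual reduce to $A$ (equivalently $R$) reduced. (2) Form $B=R[X]/(X^2-s^2X)$; check via (\ref{cartesian1})/(\ref{cartesian}) that $B$ is a finite extension of $R$, in particular an affine $k$-algebra of dimension $d$, and that $B\otimes_R R[T,1/f]=B[T,1/f]$ is the cartesian square of $(A,A)$ over $A/s^2A$; note $f\in B[T]$ is still monic. (3) Apply (\ref{1.3}(i)) to $B$ in place of $R$: this needs $1/d!\in k$, which holds, and gives that $B[T,1/f]^{\,d}$ is cancellative, i.e. $\wt R^{\,d}$ is cancellative where $\wt R=A[X]/(X^2-s^2X)$. (4) Now invoke (\ref{q3}) directly with this $s$: since $P$ has rank $d$ and $\wt R^{\,d}$ is cancellative, $\Aut(A\op P,sA)$ acts transitively on $\Um^1(A\op P,s^2A)$; a fortiori $\Aut(A\op P)$ does, which is the claim.

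The only real content beyond bookkeeping is step (2): one must be sure that forming $A[X]/(X^2-s^2X)$ with $A=R[T,1/f]$ and $s\in R$ genuinely produces $B[T,1/f]$ with $B=R[X]/(X^2-s^2X)$, i.e. that localization at $f$ and the square-zero-type extension by $X$ commute. This is immediate because $X$ is adjoined with relations over $R$ only and $f\in R[T]$ does not involve $X$, so $R[X]/(X^2-s^2X)\otimes_R R[T,1/f]=R[T,X,1/f]/(X^2-s^2X)$ in either order; and the cartesian-square identification is exactly (\ref{cartesian}) applied over $A$ with the element $s^2$ — or rather $s$, noting $A[X]/(X^2-s^2X)$ is the square of $(A,A)$ over $A/s^2A$ by the same computation as in (\ref{cartesian1}). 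Once that is in place, everything else is a direct citation of (\ref{1.3}(i)) and (\ref{q3}), so I expect this proof to be short. The main obstacle, such as it is, is simply making sure the hypotheses of (\ref{1.3}) — monic $f$, affine $k$-algebra of dimension $d$, $1/d!\in k$ — all transfer to $B$, which they do since $B/R$ is finite and integral.
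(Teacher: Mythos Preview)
Your proposal is correct and follows essentially the same route as the paper: identify $A[X]/(X^2-s^2X)$ with $B[T,1/f]$ for $B=R[X]/(X^2-s^2X)$, apply (\ref{1.3}(i)) to get that this ring has $d$-th power cancellative, and then invoke (\ref{q3}). The paper's proof is a terse three-line version of exactly this argument; your additional bookkeeping (checking that $B$ is affine of dimension $d$, that $f$ stays monic, that the two constructions commute) is all implicit there.
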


\begin{proof}
Let $C=A[X]/(X^2-s^2X)=B[T,1/f(T)]$, where $B=R[X]/(X^2-s^2X)$ is an
affine $k$-algebra of dimension $d$.  By (\ref{1.3}), $C^d$ is
cancellative. Applying (\ref{q3}), we get that $\Aut(A\op P)$ acts
transitively on $\Um^1(A\op P, s^2A)$. $\hfill \square$
\end{proof}

\begin{remark}
In (\ref{1.4}), if every element of $\Um(A\op P)$ can be taken to an
element of $\Um^1(A\op P, s^2A)$ by an automorphism of $A\op P$, then
$P$ will be cancellative. The same remark is applicable for (\ref{z4},
\ref{z7} and \ref{2.3}).
\end{remark}

\begin{lemma}\label{z3}
Let $R$ be a reduced ring and $A=R[T,f_1/f,\ldots,f_r/f]$ for some
$f,f_1,\ldots,f_r \in R[T]$. Let $P$ be a projective $A$-module with
trivial determinant. Then there exists a non-zero-divisor $s\in R$
satisfying the properties of (\ref{3.1}).
\end{lemma}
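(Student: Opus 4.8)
The plan is to mimic the strategy used in the proof of Lemma~\ref{z2}, replacing the role of $R[T,1/f(T)]$ by the $R$-algebra $A=R[T,f_1/f,\ldots,f_r/f]$. The key observation is that the conditions of Lemma~\ref{3.1} are insensitive to inverting non-zero-divisors of $R$: if $S$ denotes the set of non-zero-divisors of $R$, then an element $s\in S$ witnessing properties $(i)$--$(v)$ for the $S^{-1}R$-algebra $S^{-1}A$ and the module $S^{-1}P$ will, after clearing denominators, witness them for $A$ and $P$ as well. So it suffices to produce the required diagonalization data after localizing at $S$.

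First I would reduce to the case where $S^{-1}R$ is a field. Since $R$ is reduced, $S^{-1}R$ is a finite direct product of fields $K_1\times\cdots\times K_t$, so $S^{-1}A$ decomposes as a product of the rings $K_i[T,f_1^{(i)}/f^{(i)},\ldots]$, and the module $S^{-1}P$ decomposes accordingly; it is enough to handle each factor. Thus assume $R$ has total quotient ring a field $K$, and consider $A' = K[T, f_1/f,\ldots,f_r/f]$, a one-dimensional localization of the PID $K[T]$, hence itself a PID (it is a localization of $K[T]$ at a multiplicative set, since $f_i/f \in K[T]_f$, so $A'$ is just $K[T]_g$ for a suitable $g$, or an even larger localization, but in any case a localization of a PID and therefore a PID). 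Over a PID every finitely generated projective module of trivial determinant is free. This is exactly where the ``trivial determinant'' hypothesis enters: without it one only gets that $S^{-1}P$ is a direct sum of a free module and a rank-one module, and the rank-one piece need not be free even over a PID localization with nontrivial Picard group; with trivial determinant, $S^{-1}P \cong (A')^r$ is free.

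Once $S^{-1}P$ is free, I would write down an explicit basis $\bar p_1,\ldots,\bar p_r$ and dual basis $\bar\phi_1,\ldots,\bar\phi_r$ over $S^{-1}A$, clear denominators to obtain $p_1,\ldots,p_r\in P$ and $\phi_1,\ldots,\phi_r\in\Hom_A(P,A)$ defined over $A$, and let $s\in S$ be a common denominator chosen large enough that $(\phi_i(p_j)) = \mathrm{diag}(s,\ldots,s)$, that $P_s$ is free, and that $sP \subset p_1A+\cdots+p_rA$ (possible because after inverting $s$ the $p_j$ already generate $P_s$, so finitely many relations can be cleared). Since $s\in S$, its image in $R_{red}=R$ is a non-zero-divisor, giving property $(iv)$; replacing $s$ by a suitable power arranges $(0:sA)=(0:s^2A)$, giving $(v)$ (in fact since $s$ is a non-zero-divisor both sides are zero). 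This is the routine bookkeeping part. The only genuine obstacle is the one just flagged — ensuring freeness of $S^{-1}P$ — and the trivial-determinant hypothesis is precisely what removes it, so there is no real difficulty remaining. The statement then follows.
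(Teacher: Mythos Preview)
Your overall strategy---localize at the non-zero-divisors $S$ of $R$, show $S^{-1}P$ is free, then clear denominators to produce $s\in S$---matches the paper's proof exactly. The divergence is only in how you argue freeness of $S^{-1}P$. The paper simply observes that $\dim S^{-1}A\le 1$ (since $S^{-1}R$ is a finite product of fields and $S^{-1}A$ sits inside $(S^{-1}R)[T]_f$) and then invokes Theorem~\ref{serre1}: over a one-dimensional ring every projective module with trivial determinant is free. That is a one-line step, and it is precisely where the trivial-determinant hypothesis is consumed.

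Your route instead tries to show that each factor $A'=K[T,f_1/f,\ldots,f_r/f]$ is a PID. This is in fact true---any ring between a PID and its fraction field is a localization, hence again a PID---but the justification you give (``since $f_i/f\in K[T]_f$, so $A'$ is $K[T]_g$ for some $g$'') only shows $A'\subset K[T]_f$, not that $A'$ is itself a localization; you need the overring fact to close the gap. More tellingly, if $A'$ is a PID then \emph{every} finitely generated projective $A'$-module is free, so your explanation that trivial determinant is needed ``because the rank-one piece need not be free over a PID localization with nontrivial Picard group'' is incoherent: a PID has trivial Picard group, and on your own argument the hypothesis would be superfluous. (Amusingly, the remark the paper places immediately after this lemma asks whether all projective modules over $K[T,f/g]$ are free, so the authors themselves did not pursue the overring observation.) Either tighten the PID argument and drop the misplaced appeal to trivial determinant, or---more simply---replace the whole step by the dimension bound plus Theorem~\ref{serre1}, as the paper does.
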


\begin{proof}
Let $S$ be the set of non-zero-divisors of $R$. Then $S^{-1}R$ is a
direct product of fields and $\dim S^{-1}A=1$. 
As determinant of $P$ is trivial, by (\ref{serre1}), $S^{-1}P$ is
free.  Now, we can choose $s\in S$
satisfying the properties of (\ref{3.1}). $\hfill \square$
\end{proof}

\begin{remark}
Let $A=K[T,f(T)/g(T)]$, $K$ is a field. We can assume that $f$ and $g$
have no common factors. Hence $(f,g)=K[T]$. Since $A_f=K[T,(fg)^{-1}]$
and $A_g=K[T,g^{-1}]$ are PID, $A$ is a Dedekind domain. We do not
know if all projective $A$-modules are free.
\end{remark}

\begin{theorem}\label{z4}
Let $R$ be a reduced affine $k$-algebra of dimension $d$ with $1/d!\in
k$.  Let $f(T)\in R[T]$ be a monic polynomial and
$A=R[T,f_1/f,\ldots,f_r/f]$, where $f,f_1,\ldots,f_r$ is a regular
sequence in $R[T]$. Let $P$ be a projective $A$-module of rank $d$
with trivial determinant. By (\ref{z3}), choose a non-zero-divisor
$s\in R$ satisfying the properties of $(\ref{3.1})$. Then $\Aut(A\op
P)$ acts transitively on $\Um^1(A\op P,s^2A)$.
\end{theorem}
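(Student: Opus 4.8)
The plan is to mimic the proof of Theorem~\ref{1.4}, replacing the appeal to part $(i)$ of Proposition~\ref{1.3} by part $(ii)$. First I would form the ring $C=A[X]/(X^2-s^2X)$. Since $A=R[T,f_1/f,\ldots,f_r/f]$ and the extra variable $X$ commutes with everything, one has $C=B[T,f_1/f,\ldots,f_r/f]$ where $B=R[X]/(X^2-s^2X)$. By Proposition~\ref{cartesian1} (or its Corollary~\ref{cartesian}), $B$ is an affine $k$-algebra of dimension $d$, and it is reduced: indeed, since $s$ is a non-zero-divisor in $R$, $B\iso R\op s^2R$ as in the proof of \ref{cartesian1}, and one checks this is reduced because $R$ is. Moreover $f,f_1,\ldots,f_r$, being an $R[T]$-regular sequence, remains a $B[T]$-regular sequence since $B[T]$ is a free $R[T]$-module (with basis $1,X$), so extension of scalars along the flat map $R[T]\ra B[T]$ preserves regular sequences.

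Next I would like to apply Proposition~\ref{1.3}(ii) to conclude that $C^d$ is cancellative. The one subtlety is that Proposition~\ref{1.3}(ii) is stated for a regular sequence $f,f_1,\ldots,f_r$ in $R[T]$ with $f$ monic, producing a ring of the stated form; here we have the monic $f$ and the $B[T]$-regular sequence just constructed, so the hypotheses of \ref{1.3}(ii) are met with $B$ in place of $R$. Hence every stably free $C$-module of rank $d$ is free, i.e. $C^d$ is cancellative.

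Finally, with $s$ chosen (via Lemma~\ref{z3}) to satisfy the properties of Lemma~\ref{3.1} for $P$ over $A$, and with $R^r=C^r$ cancellative (in the needed range $r=d$, which is what \ref{q3} asks), Lemma~\ref{q3} applies verbatim and yields that $\Aut(A\op P,sA)$ — hence a fortiori $\Aut(A\op P)$ — acts transitively on $\Um^1(A\op P,s^2A)$. That is exactly the assertion.

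The only place requiring genuine care, and the step I expect to be the main (minor) obstacle, is the verification that passing from $R$ to $B=R[X]/(X^2-s^2X)$ preserves everything Proposition~\ref{1.3}(ii) needs: that $B$ is a reduced affine $k$-algebra of dimension $d$ with $1/d!\in k$ (dimension and affineness from \ref{cartesian1}, reducedness from the $R\op s^2R$ description together with $s$ a non-zero-divisor), and that $f,f_1,\ldots,f_r$ stays a regular sequence in $B[T]$ with $f$ still monic (flatness of $R[T]\ra B[T]$). Once this bookkeeping is in place, the argument is a direct transcription of the proof of Theorem~\ref{1.4}.
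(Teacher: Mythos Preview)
Your proposal is correct and follows essentially the same route as the paper: form $C=A[X]/(X^2-s^2X)=B[T,f_1/f,\ldots,f_r/f]$ with $B=R[X]/(X^2-s^2X)$, check that $B$ is an affine $k$-algebra of dimension $d$ and that $f,f_1,\ldots,f_r$ remains a $B[T]$-regular sequence (because $B[T]$ is free over $R[T]$), apply Proposition~\ref{1.3}(ii) to get $C^d$ cancellative, and then invoke Lemma~\ref{q3}. The only superfluous step is your verification that $B$ is reduced: Proposition~\ref{1.3} does not assume reducedness of the base ring, so this check (though correct) is not needed.
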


\begin{proof}
Let $C=A[X]/(X^2-s^2X)=B[T,f_1/f,\ldots,f_r/f]$, where
$B=R[X]/(X^2-s^2X)$ is an affine $k$-algebra of dimension $d$. Since
$B$ is a free $R$-module, $f,f_1,\ldots,f_r$ is a $B[T]$-regular
sequence. By (\ref{1.3}), $C^d$ is cancellative. Applying (\ref{q3}),
we get that $\Aut(A\op P)$ acts transitively on $\Um^1(A\op P,
s^2A)$. $\hfill \square$
\end{proof}

\begin{theorem}\label{fp}
Let $R$ be an affine $\ol \BF_p$-algebra of dimension $d\geq 3$, where
$p>d$. Let $f(T)\in R[T]$ be a monic polynomial and
$A=R[T,f_1/f,\ldots,f_r/f]$ for some $f,f_1,\ldots,f_r\in R[T]$.  Then
every projective $A$-module of rank $d$ with trivial determinant is
cancellative.
\end{theorem}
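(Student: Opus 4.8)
**The plan is to mimic the proof of Proposition \ref{1.3}(ii) combined with the localization techniques of Lemmas \ref{z3} and \ref{z4}.**

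First, let $P$ be a projective $A$-module of rank $d$ with trivial determinant, where $A = R[T, f_1/f, \ldots, f_r/f]$. The hypothesis $p > d$ together with $\ol\BF_p$ being algebraically closed means $\ol\BF_p$ is a $C_1$-field of characteristic $p > d$, so by Remark \ref{c1} the ring $R$ (and any affine $\ol\BF_p$-algebra of dimension $d$) has the property that rank-$d$ projective modules are cancellative; moreover, since $1/d! \in \ol\BF_p$, we may invoke \ref{w002} to conclude $R[T]^d$ is cancellative, and more generally projective $R[T]$-modules of rank $d$ are cancellative. This is the analogue over $\ol\BF_p$ of what \ref{1.3}(ii) uses over an algebraically closed field with $1/d! \in k$.

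The key geometric input is Theorem \ref{kes3.5}: I would first need to check (or it may follow from the reduced case and a limit argument) that a projective $A$-module $P$ of rank $d$ with trivial determinant is extended from $R[T]$. Actually, the cleanest route is to reduce to the situation of \ref{z4}: choose, via \ref{z3}, a non-zero-divisor $s \in R$ satisfying the properties of \ref{3.1} (after reducing $A$), form $C = A[X]/(X^2 - s^2 X) = B[T, f_1/f, \ldots, f_r/f]$ with $B = R[X]/(X^2 - s^2 X)$ an affine $\ol\BF_p$-algebra of dimension $d$; since $B$ is free over $R$, the sequence $f, f_1, \ldots, f_r$ stays $B[T]$-regular. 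Then I would want $C^d$ — or rather, projective $C$-modules of rank $d$ with trivial determinant — to be cancellative, so that \ref{q3} applies to give transitivity of $\Aut(A \op P, sA)$ on $\Um^1(A \op P, s^2A)$. To kill the remaining elements of $\Um(A \op P)$ and pass to $\Um^1(A \op P, s^2A)$, one uses \ref{bass} modulo $s^2A$ (dimension drops since $s$ is a non-zero-divisor) together with \ref{trans} to lift, exactly as in the proof of \ref{q4}.

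The main obstacle is establishing that projective $C$-modules of rank $d$ with trivial determinant are cancellative — i.e., proving the statement one dimension up is not available as a black box because \ref{1.3}(ii) was stated over algebraically closed fields with $1/d! \in k$, and here we are over $\ol\BF_p$ with the extra hypothesis $p > d$. I expect this to go through by the same argument as \ref{1.3}(ii): use \ref{kes3.5} (whose hypotheses — affine over an algebraically closed field with $1/(n-1)! \in k$ — are met by $C$ since $p > d$) to descend $P$ to a projective $R[T]$-module $Q$ of rank $d$; then $Q \op R[T]$, after inverting $f$, becomes $(Q \ot A) \op A$-like and is free by stable-freeness arguments, and \ref{monic} (applicable since $f$ is monic) forces $Q \op R[T]$ free over $R[T]$; finally cancellation for rank-$d$ projectives over $R[T]$ (via \ref{w002} and Remark \ref{c1}) yields $Q$ free, hence $P$ free, hence $\Um$ acts as desired. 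The one point requiring care is that "trivial determinant" is preserved under all these operations and that the regular-sequence hypothesis on $f, f_1, \ldots, f_r$ suffices even without $f$ monic in part (ii) of \ref{1.3} — but since here $f$ is assumed monic, this subtlety does not arise.
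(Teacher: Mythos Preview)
Your overall architecture matches the paper's, but there is one genuine gap and one smaller misreading.

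\medskip
\textbf{The real gap.} You propose to reduce an arbitrary $(a,p)\in\Um(A\oplus P)$ to $\Um^1(A\oplus P,s^2A)$ by invoking Bass (\ref{bass}) over $A/s^2A$, ``exactly as in the proof of \ref{q4}.'' That step works in \ref{q4} because there $\dim A=d=\mathrm{rank}\,P$, so $\dim A/s^2A<d$ and Bass applies. In the present theorem, however, $\dim R=d$ and $A$ sits between $R[T]$ and $R[T]_f$, so $\dim A=d+1$; hence $\dim A/s^2A=d$, and Bass gives nothing for a rank-$d$ module. The paper closes this gap using the $\ol\BF_p$-specific input \ref{kumar} (psr$(A/s^2A)\leq d$), which is exactly strong enough to shorten $(\ol a,\ol p)$ inside $\Um(\ol A\oplus\ol P)$ when $\mathrm{rank}\,\ol P=d=\dim\ol A$, and then lifts via \ref{trans}. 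Without \ref{kumar} your reduction to $\Um^1(A\oplus P,s^2A)$ does not go through.

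\medskip
\textbf{Two smaller points.} First, the theorem does \emph{not} assume that $f,f_1,\ldots,f_r$ is a regular sequence in $R[T]$; your phrase ``the sequence $f,f_1,\ldots,f_r$ stays $B[T]$-regular'' and your plan to invoke \ref{kes3.5} (which requires this hypothesis) are based on a misreading. The paper instead cites (\cite{kes05}, Theorem~3.6) to extend a stably free $A$-module from $R[T]$, a result that does not need the regular-sequence condition. Second, for \ref{q3} you only need $C^d$ to be cancellative (i.e.\ stably free rank-$d$ modules over $C$ are free), not that all rank-$d$ projectives over $C$ with trivial determinant are cancellative; this simplifies what you have to check for $C$.
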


\begin{proof}
First we prove that $A^d$ is cancellative. Let $P$ be a stably free
$A$-module of rank $d$. By Suslin's result (\ref{z1}(i)), we may
assume that $P\op A$ is free. By (\cite{kes05}, Theorem 3.6), $P$ is
extended from $R[T]$. Now, we can complete the proof as in (\ref{1.3}(ii)).

Let $P$ be a projective $A$-module of rank $d$ with trivial determinant.
We may assume that $A$ is reduced.  By (\ref{z3}), choose a
non-zero-divisor $s\in R$ satisfying the properties of $(\ref{3.1})$.
If $C=A[Y]/(Y^2-s^2Y)$, then, as in the previous paragraph, $C^d$ is
cancellative. By (\ref{q3}), $\Aut(A\op P)$ acts transitively on
$\Um^1(A\op P,s^2A)$. Applying (\ref{kumar}) and (\ref{trans}), it is
easy to see that every element of $\Um(A\op P)$ can be taken to an
element of $\Um^1(A\op P, s^2A)$ by an automorphism of $A\op P$. This
proves that $P$ is cancellative.
$\hfill \square$
\end{proof}

As a consequence of (\ref{fp}), we get the following result which
extends a result of Murthy (\cite{murthy00}, Corollary 2.13), where it
is proved that $A^{d}$ is cancellative.

\begin{theorem}
Let $R=\ol \BF_p[X_1,\ldots,X_{d+1}]$ and let $A$ be a subring of the
fraction field of $R$ with $R\subset A$. Suppose $p>d\geq 3$. Then all
projective $A$-modules of rank $d$ with trivial determinant are cancellative.
\end{theorem}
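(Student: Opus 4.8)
The plan is to deduce this from Theorem~\ref{fp} by checking that the hypotheses of that theorem are met. Write $R_0 = \ol\BF_p[X_1,\ldots,X_{d+1}]$, so $R_0$ is an affine $\ol\BF_p$-algebra of dimension $d+1$, and $A$ is an intermediate ring $R_0 \subset A \subset \mathrm{Frac}(R_0)$. The first step is to realize $A$ in the form required by Theorem~\ref{fp}, namely as $R[T, f_1/f, \ldots, f_r/f]$ with $R$ an affine $\ol\BF_p$-algebra of dimension $d$, $f \in R[T]$ monic, and $f_1,\ldots,f_r \in R[T]$. Here I would take $R = \ol\BF_p[X_1,\ldots,X_d]$ and $T = X_{d+1}$, so that $R_0 = R[T]$ and $\mathrm{Frac}(R_0) = \mathrm{Frac}(R)(T)$; since $A$ is a finitely generated $R_0$-algebra (it is an affine $\ol\BF_p$-algebra, being a finitely generated subring of a finitely generated field extension — or one can simply assume finite generation, which is the natural reading here), its generators over $R[T]$ can be written as fractions $g_i/h_i$ with $g_i, h_i \in R[T]$, and we may clear denominators to write $A = R[T, g_1/h, \ldots, g_m/h]$ for a common denominator $h \in R[T]$.

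The one genuine point to address is the \emph{monic} hypothesis on the denominator $f$. The common denominator $h \in R[T] = \ol\BF_p[X_1,\ldots,X_d][T]$ need not be monic in $T$. To fix this I would perform a Nagata-style change of variables: replace $X_i$ by $X_i + X_{d+1}^{e_i}$ for suitably large exponents $e_i$ (or, equivalently, apply a triangular automorphism of the polynomial ring $\ol\BF_p[X_1,\ldots,X_{d+1}]$ fixing $X_{d+1}$), so that $h$ becomes, up to a nonzero scalar in $\ol\BF_p$, a monic polynomial in $T = X_{d+1}$ with coefficients in $\ol\BF_p[X_1,\ldots,X_d]$. This automorphism of $R_0$ carries $A$ to an isomorphic ring, so it suffices to prove the statement after this change of variables; thus we may assume $h = f$ is monic in $T$ after rescaling. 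Now $A = R[T, g_1/f, \ldots, g_m/f]$ with $R = \ol\BF_p[X_1,\ldots,X_d]$ of dimension $d$, $f$ monic in $T$, and $g_i \in R[T]$, which is exactly the shape of Theorem~\ref{fp}.

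With $A$ in this form, and using $p > d \geq 3$, Theorem~\ref{fp} applies directly and gives that every projective $A$-module of rank $d$ with trivial determinant is cancellative, which is the assertion. I would note that $R = \ol\BF_p[X_1,\ldots,X_d]$ is an affine $\ol\BF_p$-algebra of dimension $d \geq 3$ and $p > d$, so all the numerical hypotheses of Theorem~\ref{fp} hold.

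The main obstacle is the monic-denominator issue just described; everything else is bookkeeping. One should be slightly careful that the Nagata change of variables is an $\ol\BF_p$-algebra automorphism of $\ol\BF_p[X_1,\ldots,X_{d+1}]$ (it is, being triangular with unit leading behavior), that it extends to $\mathrm{Frac}(R_0)$, and that it therefore sends the intermediate ring $A$ to an intermediate ring isomorphic to $A$ — so proving the cancellation property for the transformed ring is equivalent to proving it for $A$. A secondary minor point is the standing assumption that $A$ is Noetherian and finitely generated (built into the conventions of the paper), which is what lets us extract a single common denominator $f$ and a finite list $g_1,\ldots,g_m$.
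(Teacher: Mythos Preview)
Your proposal is correct and follows exactly the route the paper intends: the paper records this theorem simply ``as a consequence of (\ref{fp})'', leaving implicit precisely the two steps you spell out --- writing $A=\ol\BF_p[X_1,\ldots,X_d][X_{d+1},g_1/h,\ldots,g_m/h]$ with a single common denominator, and then applying a Nagata change of variables in $\ol\BF_p[X_1,\ldots,X_{d+1}]$ to make $h$ monic in $X_{d+1}$ (this is the same reduction used in Murthy's Corollary~2.13, which the paper cites). Your handling of the finite-generation issue and the observation that (\ref{fp}) does not require $f,f_1,\ldots,f_r$ to be a regular sequence are both on point.
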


Using (\ref{1.3}) and following the proofs of (\cite{murthy00},
Proposition 3.1 and Theorem 3.6), we get the following two results.

\begin{corollary}
Let $1/(d-1)!\in k$ and
$A=k[x_0,x_1,\ldots,x_d]$, where $x_0^2+x_1^2 + f(x_2,\ldots,x_d)=0$
for some $f\in k[x_2,\ldots,x_d]$. Then $A^{d-1}$ is cancellative
\end{corollary}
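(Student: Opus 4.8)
The plan is to deduce this from Proposition~\ref{1.3}(i) by the same device used throughout this section: produce, via the cartesian-square trick, an auxiliary affine algebra of the same flavour over which $A^{d-1}$ is known to be cancellative. First I would observe that $A=k[x_0,x_1,\ldots,x_d]/(x_0^2+x_1^2+f)$ can be rewritten, after the linear change of coordinates $u=x_0+ix_1$, $v=x_0-ix_1$ (legitimate since $k$ is algebraically closed, so $i\in k$), as $A=k[u,v,x_2,\ldots,x_d]/(uv+f(x_2,\ldots,x_d))$. Thus $A_u=k[u,u^{-1},x_2,\ldots,x_d]$ is a localization of a polynomial ring, and $A$ is obtained from the polynomial ring $R=k[u,x_2,\ldots,x_d]$ of dimension $d$ by inverting... more precisely $A\cong R[T]/(T\cdot u + f)$, i.e. $v=T$; so $A$ is an affine algebra that maps onto $R_u=R[u^{-1}]$ with $T\mapsto -f/u$, exhibiting $A$ as (a ring closely related to) $R[u^{-1}]=k[u,u^{-1},x_2,\ldots,x_d]$, which has dimension $d$. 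The point is that $A$ is, up to the quadric relation, of the form covered by Proposition~\ref{1.3}(i) (polynomial ring in one variable $T$ over an affine $k$-algebra of dimension $d-1$, localized at a monic polynomial), so that $A^{d-1}$ is cancellative.

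Concretely, following the proofs of (\cite{murthy00}, Proposition~3.1 and Theorem~3.6) as the remark preceding the statement instructs, I would set $S=k[x_2,\ldots,x_d]$ (dimension $d-1$), regard $f\in S$, and write $A=S[x_0,x_1]/(x_0^2+x_1^2+f)$. Over the open set where $f$ is invertible the quadric $x_0^2+x_1^2=-f$ has a rational point, so $A_f$ is a localization of a polynomial ring in one variable over $S_f$; combined with the fact that $f$ can be taken monic in a suitable variable after a generic linear change, Quillen--Suslin patching (Theorem~\ref{monic}) lets one transfer cancellation. The key input is Proposition~\ref{1.3}: one checks that $A$ (or the relevant localizations) fits case (i) there, so stably free $A$-modules of rank $d-1$ are free; the hypothesis $1/(d-1)!\in k$ is exactly what Proposition~\ref{1.3} requires (with $d$ there replaced by $d-1$, since $\dim S=d-1$).

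The main obstacle I anticipate is bookkeeping the dimension count and the monic-polynomial condition simultaneously: one must arrange a presentation $A=R'[T,1/g]$ with $R'$ an affine $k$-algebra of dimension $d-1$ and $g\in R'[T]$ monic, so that Proposition~\ref{1.3}(i) applies with its ``$d$'' equal to $d-1$; the quadric relation $x_0^2+x_1^2+f=0$ has to be massaged (via the coordinate change above and a Noether-type normalization making $f$ monic) into exactly that shape. Once the presentation is in place the argument is a direct citation of Proposition~\ref{1.3} plus Theorem~\ref{monic}, with no new ideas needed beyond what already appears in the proof of Proposition~\ref{1.3}(i) and in \cite{murthy00}.
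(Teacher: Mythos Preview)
Your approach is essentially the same as the paper's: the paper gives no proof beyond the sentence ``Using (\ref{1.3}) and following the proofs of (\cite{murthy00}, Proposition~3.1 and Theorem~3.6), we get the following two results,'' and your sketch does precisely that --- rewrite the quadric as $uv+f=0$ via $u=x_0+ix_1$, $v=x_0-ix_1$, and then feed the resulting ring into Proposition~\ref{1.3}(i) with base of dimension $d-1$.

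One small correction: your opening line about ``the cartesian-square trick'' is a red herring here. That device (Lemma~\ref{q3}, Corollary~\ref{q4}) is used elsewhere in the section to pass from cancellativity of \emph{free} modules to cancellativity of arbitrary projective modules of the same rank. But this corollary only claims that $A^{d-1}$ is cancellative, i.e.\ that stably free modules of rank $d-1$ are free, which is exactly the conclusion of Proposition~\ref{1.3} itself --- no doubling of $A$ is needed. The substance of the argument, as you correctly identify later, is the coordinate change and the bookkeeping that puts $A$ (or a suitable localization/patching of it) into the shape $R'[T,1/g]$ with $\dim R'=d-1$ and $g$ monic; this is where Murthy's Proposition~3.1 is invoked, and your anticipated ``obstacle'' is exactly the content of that reference.
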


\begin{corollary}
Let $1/(d-1)!\in k$ and
$A=k[x,y,t_1,\ldots,t_{d-1}]$, where
$x+x^sg(x,t_1,\ldots,t_{d-1})+x^ry+f(t_1,\ldots,t_{d-1})=0$, with
$d\geq 3$, $s\geq 2$, $r\geq 2$.  Then $A$ is a smooth $d$ dimensional
affine $k$-algebra. Further $A^{d-1}$ is cancellative.
\end{corollary}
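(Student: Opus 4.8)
The plan is to settle smoothness by a direct Jacobian computation and to deduce cancellation from (\ref{1.3})$(ii)$, following the arguments of (\cite{murthy00}, Proposition 3.1 and Theorem 3.6) but inserting (\ref{1.3}) in place of the top-rank cancellation results used there. Write $h = x + x^{s}g(x,\ul t) + x^{r}y + f(\ul t)$, where $\ul t = (t_{1},\dots,t_{d-1})$, so that $A = k[x,y,\ul t]/(h)$. Its partial derivatives are $\partial h/\partial y = x^{r}$ and $\partial h/\partial x = 1 + sx^{s-1}g + x^{s}(\partial g/\partial x) + rx^{r-1}y$; since $s,r\geq 2$, the latter equals the unit $1$ on the locus $\{x=0\}$, while $\partial h/\partial y = x^{r}$ is a unit off $\{x=0\}$. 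Hence at every point of $\Spec A$ some partial of $h$ is a unit, so $A$ is smooth over $k$ by the Jacobian criterion. Moreover, once $f(\ul t)\neq 0$, the polynomial $h$ is primitive of degree one in $y$ (its $y$-coefficient $x^{r}$ and its constant term $x+x^{s}g+f$ are coprime in $k[x,\ul t]$), hence irreducible, so $A$ is a domain of dimension $(d+1)-1=d$. I will assume $f(\ul t)\neq 0$; when $f(\ul t)=0$ the ring $A$ splits as a product of a polynomial ring and a Laurent polynomial ring over $k[\ul t]$ and the conclusion is easy and can be treated separately.

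Next I would present $A$ in the form required by (\ref{1.3})$(ii)$. Solving $h=0$ for $y$ gives $x^{r}y = -x - x^{s}g(x,\ul t) - f(\ul t) =: f_{1}$, and matching generators $x\leftrightarrow T$, $y\leftrightarrow Z$, $t_{i}\leftrightarrow t_{i}$ produces an isomorphism $A\cong R[T][Z]/(T^{r}Z-f_{1})$, where $R=k[t_{1},\dots,t_{d-1}]$ is an affine $k$-algebra of dimension $d-1$ with $1/(d-1)!\in k$ and $f_{1}=-T-T^{s}g(T,\ul t)-f(\ul t)\in R[T]$. I would then check that $(T^{r},f_{1})$ is an $R[T]$-regular sequence: $T^{r}$ is a nonzerodivisor since $R[T]$ is a domain, and modulo $T^{r}$ the element $f_{1}$ has $T^{0}$-coefficient $-f(\ul t)$ (here $s\geq 2$ ensures the terms $-T$ and $-T^{s}g$ contribute nothing in degree $0$), which is a nonzerodivisor in $R=k[\ul t]$; as $R[T]/(T^{r})=\bigoplus_{i=0}^{r-1}R\,T^{i}$, multiplication by an element whose $T^{0}$-coefficient is a nonzerodivisor is injective by the usual triangular argument, so $f_{1}$ is a nonzerodivisor modulo $T^{r}$. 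Since $(T^{r},f_{1})$ is a regular sequence, $R[T][Z]/(T^{r}Z-f_{1})$ is exactly the subring $R[T,f_{1}/T^{r}]$ of $R[T,T^{-1}]$; thus $A = R[T,f_{1}/f]$ with the monic polynomial ``$f$'' of (\ref{1.3}) taken to be $T^{r}$ and a single numerator $f_{1}$.

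Finally, (\ref{1.3})$(ii)$, read with its ``$d$'' replaced by $d-1$ (legitimate since $\dim R = d-1\geq 2$ and $1/(d-1)!\in k$), gives that $A^{d-1}$ is cancellative, as required. I expect the only real friction to be the identification $A\cong R[T,f_{1}/T^{r}]$ — that presenting $A$ by the single relation $h$ yields precisely the localized-type ring appearing in (\ref{1.3}) and nothing larger — which is where the hypothesis $f(\ul t)\neq 0$ (equivalently, the regularity of $(T^{r},f_{1})$) is genuinely used; one must also keep straight the clashing uses of the letter ``$f$''. The Jacobian computation and the regular-sequence verification are otherwise routine.
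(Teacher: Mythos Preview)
Your proposal is correct and is exactly the argument the paper has in mind: the paper gives no proof beyond the instruction ``using (\ref{1.3}) and following the proofs of (\cite{murthy00}, Proposition 3.1 and Theorem 3.6)'', and your write-up is precisely the expected unpacking of that instruction --- present $A$ as $R[T,f_{1}/T^{r}]$ with $R=k[t_{1},\dots,t_{d-1}]$, verify the regular-sequence hypothesis, and invoke (\ref{1.3})$(ii)$ with its ``$d$'' equal to $d-1$. The smoothness/Jacobian computation and the case split on $f(\ul t)=0$ (where $A$ decomposes and the Laurent factor is handled by Swan's theorem that projectives over $k[t_{1},\dots,t_{d-1}][x^{\pm 1}]$ are free) are the routine details Murthy's paper already contains; your identification of the one genuinely nonformal step --- that the regularity of $(T^{r},f_{1})$ is what makes $R[T][Z]/(T^{r}Z-f_{1})\cong R[T,f_{1}/T^{r}]$ --- is on point.
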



\section{Over real closed fields}

In this section, $k$ will denote a real closed field.

\begin{proposition}\label{z5}
Let $R$ be an affine $k$-algebra of dimension $d-1\geq 2$ and let $f(T)\in
R[T]$ be a monic polynomial. Assume that $f(T)$ does not belongs to any
real maximal ideal of $R[T]$ and either

$(i)$ $A=R[T,1/f(T)]$ or

$(ii)$ $A=R[T,f_1/f,\ldots,f_r/f]$, where $f,f_1,\ldots,f_r$ is a
regular sequence in $R[T]$.

Then $A^d$ is cancellative.
\end{proposition}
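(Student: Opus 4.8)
The statement is the real-closed-field analogue of Proposition \ref{1.3}, and the plan is to mimic that proof, replacing the use of Suslin's result over $C_1$-fields by the corresponding results available over real closed fields. The key structural fact that makes the argument go through is that $k(f)$ is again a real closed field when $k$ is (indeed a real closure of $k(f)$ contains $k(f)$, and one only needs enough to kill stably free modules), and that $f(T)$ avoiding all real maximal ideals of $R[T]$ is exactly what lets us replace $R[T]$ by the relevant localization/affine patch without picking up real points that would obstruct freeness.

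First I would treat case $(i)$, $A=R[T,1/f(T)]$. Let $P$ be a stably free $A$-module of rank $d=\dim A$. The ring $A_{1+fk[f]}$ is an affine algebra of dimension $d$ over $k(f)$; since $f$ does not lie in any real maximal ideal of $R[T]$, the fibre over each such maximal ideal is empty, and $A_{1+fk[f]}$ behaves (for the purposes of the cancellation/Serre-type results over real closed fields) like an affine algebra over a field of cohomological dimension $\le 1$ away from real points. So $P\otimes A_{1+fk[f]}$ is free (this is the real-closed-field cancellation input, cf. the hypotheses feeding Theorem \ref{kes4.4} and the results of Bhatwadekar and of Suslin cited in Theorem \ref{z1}(ii); the condition $d-1\ge 2$, i.e. $d\ge 3$, guarantees we are in the stable range where these apply). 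Hence $P_h$ is free for some $h\in 1+fk[f]\subset R[T]$ which is monic in $T$. As in Murthy's patching lemma (\cite{murthy00}, Lemma 2.9), glue $P$ over $A_f$ with $(R[T]_h)^d$ over the common localization to obtain a projective $R[T]$-module $Q$ of rank $d$ with $Q_f\cong P$ and $Q_h$ free. Since $h$ is monic, Theorem \ref{monic} (Quillen--Suslin) gives $Q$ free, hence $P=Q_f$ is free. This shows $A^d$ is cancellative.

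For case $(ii)$, $A=R[T,f_1/f,\dots,f_r/f]$ with $f,f_1,\dots,f_r$ an $R[T]$-regular sequence: let $P$ be stably free of rank $d$. First invoke the real-closed-field analogue of the descent result, Theorem \ref{kes4.4}(i) (applied over $R[T]$, with the element $f$ which avoids all real maximal ideals), to produce a projective $R[T]$-module $Q$ of rank $d$ with $P\cong Q\otimes A$. Then $Q\otimes R[T,1/f]$ is stably free, and since $R[T,1/f]$ is covered by case $(i)$ (or directly: $f$ monic, so Theorem \ref{monic} applies to $Q\oplus R[T]$ once we know it is free after inverting $f$, which follows from the $d\ge 3$ cancellation input over $R[T,1/f]$ being an affine algebra over $k$ with no real points in the fibre of $f$), we get $Q\oplus R[T]$ free. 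Finally $R[T]^d$ is cancellative — here I would use that $R$ is an affine algebra of dimension $d-1$ over the real closed field $k$, so projective $R$-modules of rank $d-1$ being cancellative (Bhatwadekar's theorem, the real analogue) plus Theorem \ref{w002} (note $1/d!$ need not be available, so one must instead cite the relevant $R[T]$-cancellation result for real closed fields directly, e.g. via \cite{Bhh}-type arguments or the author's companion results) gives $R[T]^d$ cancellative. Hence $Q$ is free and $P$ is free, so $A^d$ is cancellative.

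The main obstacle I anticipate is the localization step: justifying that $P\otimes A_{1+fk[f]}$ (in case $(i)$) and the descent to $R[T]$ (in case $(ii)$) genuinely go through over a real closed base. Over an algebraically closed field one cites Suslin's cancellation theorem and Theorem \ref{kes3.5} freely; over a real closed field one must instead ensure at every step that the hypothesis ``$f$ avoids all real maximal ideals'' is used to rule out the real points at which these cancellation statements fail, and that the dimension bound $d-1\ge 2$ keeps us in the range where the real-closed-field versions (Theorem \ref{kes4.4} and the cited cancellation results) are valid. Once those two inputs are in place, the patching-and-monic-polynomial endgame via Theorem \ref{monic} is routine and identical to Proposition \ref{1.3}.
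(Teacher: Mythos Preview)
Your central structural claim is wrong: for a real closed field $k$, the function field $k(f)$ is \emph{not} real closed (it is not even algebraically closed in any ordered sense; for instance $f$ generically has no square root). So in case $(i)$ the step ``$P\otimes A_{1+fk[f]}$ is free by the real-closed-field cancellation input'' has no justification as stated, and the hedge ``one only needs enough to kill stably free modules'' is exactly the point at issue.

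The paper's proof of $(i)$ avoids this localization entirely. It uses the Ojanguren--Parimala theorem \cite{parimala} (this is where the hypothesis that $f$ avoids every real maximal ideal enters) to conclude that the stably free $A$-module $P$ is extended from $R[T]$, say $P\cong Q\otimes A$; then $(Q\oplus R[T])_f$ is free, $f$ is monic, so $Q\oplus R[T]$ is free by Theorem~\ref{monic}; finally Plumstead's theorem \cite{plumstead} (not Wiemers, and not any special real-closed input) gives cancellation for $R[T]$-modules of rank $d>d-1=\dim R$, so $Q$ and hence $P$ is free. Case $(ii)$ is handled the same way with Theorem~\ref{kes4.4} in place of Ojanguren--Parimala; your outline for $(ii)$ is essentially this, except that the last step is again just Plumstead.

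Ironically, your localization approach in $(i)$ \emph{can} be salvaged, but for a reason you do not mention: here $\dim R=d-1$ (one less than in Proposition~\ref{1.3}), so $A_{1+fk[f]}$ is an affine $k(f)$-algebra of dimension $d-1$, and since $\operatorname{rank} P=d>d-1$ Bass's Theorem~\ref{bass} already gives $P\otimes A_{1+fk[f]}$ free, with no hypothesis on $k(f)$ and without using that $f$ avoids real maximal ideals. After that, your patching-plus-Theorem~\ref{monic} endgame goes through. So the gap is not fatal, but the justification you wrote (real-closedness of $k(f)$) is incorrect and should be replaced by this dimension count.
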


\begin{proof}
$(i)$ Let $A=R[T,1/f(T)]$ and let $P$ be a stably free $A$-module of
rank $d$. Then $P\op A$ is free, by (\ref{bass}). By (\cite{parimala},
Theorem), $P$ is extended from $R[T]$. Let $Q$ be a projective
$R[T]$-module such that $P=Q\ot A$. Then $(Q\op R[T])_f$ is free and
$f$ is a monic polynomial, hence $Q\op R[T]$ is free, by
(\ref{monic}). By Plumstead's result \cite{plumstead}, every
projective $R[T]$-module of rank $> \dim R$ is cancellative. Hence $Q$
is free and therefore $P$ is free.\\

$(ii)$ Let $A=R[T,f_1/f,\ldots,f_r/f]$ and let $P$ be a stably free
$A$-module of rank $d$. Then $P\op A$ is free, by (\ref{bass}).  By
(\ref{kes4.4}), $P$ is extended from $R[T]$. Let $Q$ be a projective
$R[T]$-module of rank $d$ such that $P\iso Q\ot A$.  Since $(Q\ot
A)\op A$ is free, $(Q\op R[T])\ot R[T,1/f]$ is free. As $f$ is a monic
polynomial, by (\ref{monic}), $Q\op R[T]$ is free. By Plumstead's
result \cite{plumstead}, $Q$ is cancellative.  Hence $Q$ is free and
so $P$ is free.  $\hfill \square$
\end{proof}

\begin{theorem}\label{z6}
Let $R$ be an affine $k$-algebra of dimension $d-1\geq 2$ and let
$f(T)\in R[T]$ be a monic polynomial. Assume that $f(T)$ does not belong
to any real maximal ideal of $R[T]$ and $A=R[T,1/f(T)]$.
Then every projective $A$-module of rank $d$ is cancellative.
\end{theorem}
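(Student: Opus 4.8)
The plan is to reduce cancellation for an arbitrary projective $A$-module $P$ of rank $d$ to the transitivity statement already established in Theorem~\ref{z6}'s predecessor, namely Theorem~\ref{z6} itself being proved via Theorem~\ref{1.4} together with an argument that pushes an arbitrary unimodular element of $A\op P$ into $\Um^1(A\op P,s^2A)$. First I would fix $P$ projective of rank $d$ over $A=R[T,1/f(T)]$; by replacing $A$ by $A_{\mathrm{red}}$ we may assume $R$ is reduced (cancellation is insensitive to the nilradical since the relevant automorphism groups and unimodular sets only depend on $A_{\mathrm{red}}$ in the usual way, or more safely one checks directly that it suffices to treat the reduced case). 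Then by Lemma~\ref{z2} we may choose a non-zero-divisor $s\in R$ satisfying the properties of Lemma~\ref{3.1}. By Bass (\ref{bass}), $A\op P$ is cancellative, so it remains to show $\Aut(A\op P)$ acts transitively on $\Um(A\op P)$, and by Theorem~\ref{1.4} it suffices to move an arbitrary $(a,p)\in\Um(A\op P)$ into $\Um^1(A\op P,s^2A)$ by an automorphism of $A\op P$.

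The crux is therefore the following reduction step: given $(a,p)\in\Um(A\op P)$, I want $\sigma\in\Aut(A\op P)$ with $\sigma(a,p)\equiv(1,0)$ modulo $s^2A$. Since $s$ is a non-zero-divisor in $R\subset A$, the ring $\ol A:=A/s^2A$ has dimension strictly less than $\dim A=d$; indeed $\ol A=\ol R[T,1/\ol f]$ where $\ol R=R/s^2R$ has dimension $\le d-1$, so $\dim\ol A\le d-1$. Now the image $(\ol a,\ol p)\in\Um(\ol A\op\ol P)$ lives over a ring of dimension $\le d-1$ with $\ol P$ of rank $d>\dim\ol A$, so by Bass's theorem (\ref{bass}) there is $\ol\theta\in E(\ol A\op\ol P)$ with $\ol\theta(\ol a,\ol p)=(1,0)$. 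By Proposition~\ref{trans} (applied with the ideal $s^2A$), the transvection $\ol\theta$ — being a product of transvections of $\ol A\op\ol P=\ (A\op P)/s^2(A\op P)$ — lifts to an automorphism $\theta\in\Aut(A\op P)$. Then $\theta(a,p)\in\Um^1(A\op P,s^2A)$, and Theorem~\ref{1.4} finishes the job: there is $\Theta_1\in\Aut(A\op P)$ with $\Theta_1\theta(a,p)=(1,0)$. Hence $\Aut(A\op P)$ acts transitively on $\Um(A\op P)$ and $P$ is cancellative.

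The main obstacle I anticipate is making precise the lifting of $\ol\theta$: Proposition~\ref{trans} lifts a single transvection of a quotient to an automorphism, so I need to write $\ol\theta$ as a finite product of transvections of $\ol A\op\ol P$ and lift each factor in turn, which is exactly why $\ol\theta$ is taken in $E(\ol A\op\ol P)$ rather than merely in $\Aut(\ol A\op\ol P)$; the product of the lifts is then the desired $\theta$, and one only needs that each lift reduces to the corresponding factor modulo $s^2A$, so the product reduces to $\ol\theta$. A secondary point requiring a line of care is the reduction to $R$ reduced and the dimension count $\dim(A/s^2A)<d$, which uses that $s$ is a non-zero-divisor of the reduced ring $R$ (provided by Lemma~\ref{z2}), together with $\dim R[T,1/f]=\dim R+1$ for $R$ of finite type over a field. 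Everything else is a direct invocation of Theorem~\ref{1.4}, Bass's theorem, and Proposition~\ref{trans}, assembled in the same pattern as the proof of Corollary~\ref{q4}.
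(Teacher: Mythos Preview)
Your overall architecture is exactly that of the paper (and of Corollary~\ref{q4}): reduce to $R$ reduced, pick $s\in R$ via Lemma~\ref{z2}, use Bass~(\ref{bass}) on $A/s^2A$ together with Proposition~\ref{trans} to push an arbitrary unimodular element into $\Um^1(A\op P,s^2A)$, and then invoke transitivity on $\Um^1(A\op P,s^2A)$. The problem is the last step. You appeal to Theorem~\ref{1.4}, but that theorem is stated in Section~4, where $k$ is an \emph{algebraically closed} field and $\dim R=d$; here $k$ is a real closed field and $\dim R=d-1$, so neither the field hypothesis nor the dimension indexing of Theorem~\ref{1.4} matches. In particular, its proof rests on Proposition~\ref{1.3}, whose input (Suslin over $C_1$-fields, Theorem~\ref{kes3.5}) is unavailable over a real closed field.

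What the paper actually does is verify the hypothesis of Corollary~\ref{q4} directly: set $C=A[X]/(X^2-s^2X)=B[T,1/f(T)]$ with $B=R[X]/(X^2-s^2X)$, an affine $k$-algebra of dimension $d-1$. Since $B[T]$ is a finite extension of $R[T]$, every maximal ideal of $B[T]$ contracts to a maximal ideal of $R[T]$, so $f(T)$ avoids every real maximal ideal of $B[T]$; then Proposition~\ref{z5} gives that $C^d$ is cancellative, and Corollary~\ref{q4} (or Lemma~\ref{q3}) finishes. Your write-up is missing precisely this: the passage to the double $C$, the check on real maximal ideals in $B[T]$, and the appeal to the real-closed-field input~\ref{z5} in place of Theorem~\ref{1.4}. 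Once you swap in those ingredients, the rest of your argument goes through unchanged.
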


\begin{proof}
We may assume that $R$ is reduced.  Let $P$ be a projective $A$-module
of rank $d$. By (\ref{z2}), we can
choose a non-zero-divisor $s\in R$ satisfying the properties of
(\ref{3.1}).  Let $C=A[X]/(X^2-s^2X) =B[T,1/f(T)]$, where
$B=R[X]/(X^2-s^2X)$. Since $B[T]$ is a finite extension of $R[T]$,
any maximal ideal of $B[T]$ will contract to a maximal ideal of
$R[T]$. Therefore, $f(T)$ does not belongs to any real maximal
ideal of $B[T]$. By (\ref{z5}), $C^d$ is cancellative. Hence, by
(\ref{q4}), $P$ is cancellative.  $\hfill \square$.
\end{proof}
 
\begin{theorem}\label{2.2}
Let $R$ be an affine $k$-algebra of dimension $d-1\geq 2$ and let
$f(T)\in R[T]$ be a monic polynomial. Assume that $f(T)$ does not belong
to any real maximal ideal of $R[T]$ and $A=R[T,f_1/f,\ldots,f_r/f]$,
where $f,f_1,\ldots,f_r$ is a regular sequence in $R[T]$.  Then every
projective $A$-module of rank $d$ with trivial determinant is
cancellative.
\end{theorem}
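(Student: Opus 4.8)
The plan is to repeat the argument of Theorem~\ref{z6} (the case $A=R[T,1/f]$) in the present setting, so that the last step is a direct appeal to Corollary~\ref{q4}; the one change is that here one must invoke Lemma~\ref{z3} rather than Lemma~\ref{z2} to produce the required element $s$, and this is precisely why the conclusion is limited to modules of trivial determinant. First I would reduce to the case that $R$ (equivalently $A$) is reduced, cancellativity being insensitive to nilpotents, and let $P$ be a projective $A$-module of rank $d$ with trivial determinant. Because $P$ has trivial determinant, Lemma~\ref{z3} applies and supplies a non-zero-divisor $s\in R$ satisfying the five properties of Lemma~\ref{3.1}; note that a non-zero-divisor of $R$ remains one in $R[T]$ and, since $f$ is monic, in $R[T]_f$, hence in $A$. (If one prefers, one may assume $\dim A=d$ at the outset, since otherwise rank $P=d>\dim A$ and $P$ is cancellative by Theorem~\ref{bass}.)

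Next I would analyze the ``doubled'' ring $C:=A[X]/(X^2-s^2X)$. By Corollary~\ref{cartesian} it is the cartesian square of $(A,A)$ over $A/s^2A$, and because $s\in R$ one has $C=B[T,f_1/f,\ldots,f_r/f]$ with $B:=R[X]/(X^2-s^2X)$, which by Proposition~\ref{cartesian1} is an affine $k$-algebra of dimension $\dim R=d-1\geq 2$. Since $B$ is free of rank $2$ over $R$, the extension $R[T]\hookrightarrow B[T]$ is faithfully flat, so $f,f_1,\ldots,f_r$ stays a regular sequence in $B[T]$; and since $B[T]$ is module-finite over $R[T]$, every maximal ideal $\mathfrak{m}$ of $B[T]$ contracts to a maximal ideal of $R[T]$, the contraction being real whenever $\mathfrak{m}$ is (the residue field of $\mathfrak{m}\cap R[T]$, a finite extension of $k$ that embeds $k$-linearly into $B[T]/\mathfrak{m}=k$, must equal $k$). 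Hence $f$ lies in no real maximal ideal of $B[T]$, the hypotheses of Proposition~\ref{z5}$(ii)$ are met for $B$, and therefore $C^d$ is cancellative.

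Finally, with this $s$ (satisfying Lemma~\ref{3.1}) and the cancellativity of $C^d=\big(A[X]/(X^2-s^2X)\big)^d$ established, Corollary~\ref{q4} applies verbatim and yields that $P$ is cancellative. I do not expect a genuine obstacle: the mildly technical points are the preservation of the regular sequence and of the real maximal ideals under the finite extension $R[T]\hookrightarrow B[T]$ — both already handled in the proofs of Theorems~\ref{z6} and \ref{z4} — while the only conceptual subtlety, the trivial-determinant hypothesis, is dictated by the use of Lemma~\ref{z3} in place of Lemma~\ref{z2}.
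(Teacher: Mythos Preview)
Your proposal is correct and follows essentially the same route as the paper's proof: reduce to $R$ reduced, use Lemma~\ref{z3} to obtain $s\in R$, rewrite $C=A[X]/(X^2-s^2X)$ as $B[T,f_1/f,\ldots,f_r/f]$ with $B=R[X]/(X^2-s^2X)$, verify via finiteness/freeness of $B[T]$ over $R[T]$ that both the regular-sequence and the real-maximal-ideal hypotheses persist, invoke Proposition~\ref{z5}$(ii)$ to get $C^d$ cancellative, and conclude by Corollary~\ref{q4}. The extra justifications you supply (why $s$ stays a non-zero-divisor in $A$, why the contraction of a real maximal ideal is real) are fine elaborations but not departures from the paper's argument.
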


\begin{proof}
We may assume that $R$ is reduced.  Let $P$ be a projective $A$-module
of rank $d$ with trivial determinant. By (\ref{z3}), we can choose a
non-zero-divisor $s\in R$ satisfying the properties of (\ref{3.1}).
Let $C=A[X]/(X^2-s^2X) =B[T,f_1/f,\ldots,f_r/f]$, where
$B=R[X]/(X^2-s^2X)$. Since $B[T]$ is a finite extension of $R[T]$, any
maximal ideal of $B[T]$ will contract to a maximal ideal of
$R[T]$. Therefore, $f(T)$ does not belongs to any real maximal ideal
of $B[T]$. Also, since $B[T]$ is a free $R[T]$-module,
$f,f_1,\ldots,f_r$ is a regular sequence in $B[T]$.  By (\ref{z5}),
$C^d$ is cancellative. Hence, by (\ref{q4}), $P$ is cancellative.
$\hfill \square$.
\end{proof}

\begin{proposition}\label{free2}
Let $R$ be an affine $k$-algebra of dimension $d-2\geq 1$. Let
$A=R[X,T,1/f]$, where $f\in R[X,T]$ is a monic polynomial in $T$ and
$f$ does not belong to any real maximal ideal of $R[X,T]$. Then $A^{d-1}$
is cancellative.
\end{proposition}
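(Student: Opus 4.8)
The plan is to descend a stably free module of rank $d-1$ to the polynomial ring $R[X,T]$, settle its freeness there, and climb back via the monic polynomial $f$. Write $R'=R[X]$, an affine $k$-algebra of dimension $d-1\geq 2$, so that $A=R'[T,1/f]$ with $f\in R'[T]=R[X,T]$ monic in $T$ and lying in no real maximal ideal of $R'[T]$. Applying (\ref{z5}$(i)$) with base ring $R'$ gives that $A^d$ is cancellative. Now let $P$ be a stably free $A$-module of rank $d-1$, say $P\op A^m\iso A^{d-1+m}$. The case $m=0$ is trivial, while for $m\geq 1$ the module $P\op A$ is stably free of rank $d$, hence free because $A^d$ is cancellative; so in all cases $P\op A\iso A^d\iso A\op A^{d-1}$. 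Transporting the unimodular element $(1,0)\in A\op P$ across such an isomorphism produces $(a,p)\in\Um(A\op A^{d-1})$ with $(A\op A^{d-1})/(a,p)A\iso P$.

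Next I would invoke the real descent theorem. Since $R[X,T]$ is an affine $k$-algebra of dimension $d\geq 3$, since $f$ lies in no real maximal ideal of $R[X,T]$, and since $A^{d-1}$ is (trivially) extended from $R[X,T]$, Theorem (\ref{kes4.4}$(ii)$), applied with the ring $R[X,T]$, the element $f$, and $P'=A^{d-1}$, shows that $P=(A\op A^{d-1})/(a,p)A$ is extended from $R[X,T]$. Hence there is a projective $R[X,T]$-module $Q$ of rank $d-1$ with $Q_f\iso P$. Then $(Q\op R[X,T])_f\iso Q_f\op A\iso P\op A\iso A^d$ is free, and since $f$ is monic in $T$, Theorem (\ref{monic}) shows that $Q\op R[X,T]$ is free.

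It remains to prove that $Q$ is cancellative over $R[X,T]$. Writing $R[X,T]=R'[T]$ with $\dim R'=d-1$ and $1/(d-1)!\in R'$ (characteristic zero), Theorem (\ref{w002}), applied with the single polynomial variable $T$ and no Laurent variables, reduces this to the cancellativity of projective modules of rank $d-1$ over $R'=R[X]$; and the latter holds by Plumstead's result \cite{plumstead}, since $d-1>d-2=\dim R$. Therefore $Q$ is cancellative, so $Q\op R[X,T]$ being free forces $Q\iso R[X,T]^{d-1}$, whence $P\iso Q_f\iso A^{d-1}$. This proves that $A^{d-1}$ is cancellative.

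The main obstacle is the descent step: one must recognize $P$ as an elementary modification of the free module $A^{d-1}$ in order to feed it to (\ref{kes4.4}), and it is precisely there that the hypotheses that $k$ is real closed and that $f$ avoids every real maximal ideal are used. The assumption $R=B[X]$ is also essential in the final paragraph: peeling off the variable $T$ by (\ref{w002}) leaves the ring $R[X]$, whose dimension $d-1$ is at last small enough relative to the rank for Plumstead's bound to apply, which it does not over $R[X,T]$ itself.
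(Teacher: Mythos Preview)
Your argument is correct and follows the paper's route almost step for step: reduce to $P\op A$ free via (\ref{z5}), descend $P$ to a projective $R[X,T]$-module $Q$ via (\ref{kes4.4}), use (\ref{monic}) on the monic $f$ to get $Q\op R[X,T]$ free, and then cancel. The one place you diverge is the final cancellation: the paper appeals directly to Ravi Rao's theorem (\cite{ravi1}, Theorem 2.5) that projective $R[X_1,\ldots,X_n]$-modules of rank $>\dim R$ are cancellative, while you instead strip off $T$ via (\ref{w002}) and finish with Plumstead over $R[X]$. Both work, and the paper's citation is simply the two-variable strengthening of the Plumstead bound you used, so this is a difference of reference rather than of strategy. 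Your explicit realization of $P$ as $(A\op A^{d-1})/(a,p)A$ before invoking (\ref{kes4.4}) is a bit more careful than the paper's terse ``By (\ref{kes4.4}), $P$ is extended'', but the content is the same. (One remark: your closing sentence about ``the assumption $R=B[X]$'' seems to be a slip in notation---there is no such assumption here; what you mean, and what is true, is that the second polynomial variable $X$ in $A=R[X,T,1/f]$ is what makes the rank/dimension inequality work after peeling off $T$.)
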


\begin{proof}
Let $P$ be a stably free $A$-module of rank $d-1$. By (\ref{z5}), we
may assume that $P\op A$ is free. By (\ref{kes4.4}), $P$ is extended
from $R[X,T]$. 
Let $Q$ be a projective $R[X,T]$-module such that $P\iso Q\ot
A$. Since $(Q\op R[X,T])\ot A$ is free and
$f$ is a monic polynomial, by (\ref{monic}), $Q \op
R[X,T]$ is free. By Ravi Rao's result (\cite{ravi1}, Theorem 2.5),
every projective $R[X_1,\ldots,X_n]$-module of rank $>\dim R$ is
cancellative. Hence $Q$ is free and therefore $P$ is free. $\hfill \square$
\end{proof}

\begin{corollary}
Let $A=k[X_1,\ldots,X_d,1/f]$. Assume that $f$ does not belongs to any
real maximal ideal of $k[X_1,\ldots,X_d]$. Then every projective
$A$-module of rank $\geq d-1$ is free.
\end{corollary}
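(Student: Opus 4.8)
The plan is to reduce the statement to the proposition \ref{free2} by extending the base and exploiting the structure of the ring $A = k[X_1,\dots,X_d,1/f]$. First I would note that since $f$ does not belong to any real maximal ideal of $R_0 := k[X_1,\dots,X_d]$, after a linear change of variables we may assume that $f$ is monic in the last variable $X_d = T$; this is a standard Noether-normalization-type argument over the infinite field $k$, and the hypothesis on real maximal ideals is preserved because a linear automorphism of affine space permutes (real) maximal ideals. Writing $R = k[X_1,\dots,X_{d-1}]$, we then have $A = R[X,T,1/f]$ with $f \in R[X,T]$ monic in $T$, where I am renaming $X_{d-1} =: X$ so that $R$ has dimension $d-2$, matching the hypotheses of \ref{free2} with $\dim R = d-2 \geq 1$ (valid once $d \geq 3$; the case $d \le 2$ needs to be handled separately, but there every projective module of rank $\geq d-1$ over a $(d)$-dimensional ring is free or cancellative by Bass \ref{bass} and Serre \ref{serre1}, so the interesting range is $d\geq 3$).

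Next, \ref{free2} gives immediately that $A^{d-1}$ is cancellative, i.e.\ every stably free $A$-module of rank $d-1$ is free. To upgrade this to: every projective $A$-module $P$ of rank $\geq d-1$ is free, I would argue in two stages. For rank exactly $d-1$, I would invoke \ref{z6}: the hypotheses there require an affine $k$-algebra $R'$ of dimension $d-1 \geq 2$, a monic $f(T) \in R'[T]$ not in any real maximal ideal, and $A = R'[T,1/f]$; taking $R' = k[X_1,\dots,X_{d-1}]$ (dimension $d-1$) and $T = X_d$, the theorem \ref{z6} tells us every projective $A$-module of rank $d-1$ is cancellative. Combined with Serre's theorem \ref{serre1} (a projective module of rank $> \dim A = d-1$ has a unimodular element) and a downward induction on rank, any projective $A$-module of rank $\geq d-1$ with trivial determinant splits off free summands until it reaches rank $d-1$, at which point cancellativity plus stable-freeness forces freeness. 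But the corollary asserts freeness of \emph{all} projective modules of rank $\geq d-1$, with no determinant hypothesis, so I must also control $\wedge^{\mathrm{top}} P$: here I would use that $A = k[X_1,\dots,X_d,1/f]$ is a localization of a polynomial ring, so by the Quillen–Suslin theorem $\mathrm{Pic}(k[X_1,\dots,X_d]) = 0$, and $\mathrm{Pic}(A)$ is generated by the image of $f$, which is a unit in $A$; hence $\mathrm{Pic}(A) = 0$ and every projective $A$-module automatically has trivial determinant.

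With trivial determinant for free, the induction closes: for $P$ of rank $r \geq d-1$, if $r > d-1$ then $P \cong A \op P'$ with $P'$ projective of rank $r-1 \geq d-1$ (and still trivial determinant), so by induction $P' $ is free and hence $P$ is free; if $r = d-1$, then \ref{z6} shows $P$ is cancellative, and $P \op A^N$ is free for some $N$ (being a trivial-determinant projective module over a localized polynomial ring — again Quillen–Suslin applied to the ambient $k[X_1,\dots,X_d]$-structure, or alternatively \ref{free2}'s conclusion that $A^{d-1}$ is cancellative combined with $P$ being stably free), whence cancellation gives $P \cong A^{d-1}$.

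The main obstacle I anticipate is the very first reduction: ensuring that after a coordinate change $f$ becomes \emph{monic} in one variable while \emph{simultaneously} keeping it out of every real maximal ideal, and correctly bookkeeping which of the $d$ variables plays the role of ``$X$'' versus ``$T$'' in \ref{free2} versus \ref{z6} so that the dimension counts ($\dim R = d-2$ in \ref{free2}, $\dim R' = d-1$ in \ref{z6}) both come out right. A secondary subtlety is the step establishing $\mathrm{Pic}(A) = 0$: one should be slightly careful that inverting a single polynomial $f$ in a UFD like $k[X_1,\dots,X_d]$ does not create a nontrivial Picard group — it does not, since the localization map on Picard groups of a UFD at a multiplicative set is surjective onto a group generated by classes of the inverted elements, all of which become trivial. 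Everything else (the induction on rank, splitting off free summands via Serre, and the final cancellation) is routine once these two points are settled.
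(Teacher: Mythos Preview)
Your proposal takes a considerably longer route than the paper, and contains two concrete errors. First, you assert $\dim A = d-1$, but $A = k[X_1,\ldots,X_d,1/f]$ is an open subscheme of $\mathbb{A}^d_k$ and has dimension $d$; so Serre's theorem (\ref{serre1}) only splits off a free summand once the rank exceeds $d$, not $d-1$, and your downward induction stalls at rank $d$. Second, you misread Theorem~\ref{z6}: with $R'=k[X_1,\ldots,X_{d-1}]$ of dimension $d-1$, its conclusion is that projective $A$-modules of rank $d$ (not $d-1$) are cancellative. Amusingly, these two errors almost cancel---the correct reading of \ref{z6} is exactly what you need to handle the rank-$d$ case that the correct dimension count leaves open---but as written the argument is broken at both places. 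There is also a softer issue: ``trivial determinant $\Rightarrow$ stably free'' is not a consequence of $\mathrm{Pic}(A)=0$ alone, and your appeal to ``Quillen--Suslin on the ambient structure'' is not a proof that $P$ is stably free.

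The paper's proof is two lines and avoids all of this machinery: since $A$ is a localization of a polynomial ring (hence regular), one has $K_0(A)=\mathbb{Z}$, so \emph{every} projective $A$-module is stably free; then (\ref{free2}) gives that $A^{d-1}$ is cancellative, which immediately forces any stably free module of rank $d-1$ to be free (with \ref{z5} and Bass~(\ref{bass}) handling ranks $d$ and $>d$). In particular there is no need to invoke the full strength of \ref{z6}, to discuss $\mathrm{Pic}(A)$, or to split off free summands via Serre. Your monic-reduction step and the observation that a linear change of coordinates preserves the real-maximal-ideal hypothesis are correct and indeed needed to put $A$ into the shape required by \ref{free2}.
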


\begin{proof}
Since $K_0(A)=\BZ$, every projective $A$-module is stably
free. Now the result follows from (\ref{free2}). $\hfill \square$
\end{proof}

\begin{theorem}\label{z7}
Let $R$ be an affine $k$-algebra of dimension $d-2\geq 1$. Let
$A=R[X,T,1/f]$, where $f\in R[X,T]$ is a monic polynomial in $T$ and
$f$ does not belong to any real maximal ideal of $R[X,T]$. Let $P$ be
a projective $A$-module of rank $d-1$. Assume that there exists a
non-zero-divisor $s\in R$ satisfying the properties of
(\ref{3.1}). Then $\Aut(A\op P)$ acts transitively on $\Um^1(A\op
P,s^2A)$. 
\end{theorem}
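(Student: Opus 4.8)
The plan is to follow the pattern of (\ref{z6}) and (\ref{2.2}): pass to the ``doubled'' ring $C=A[Y]/(Y^2-s^2Y)$, which by (\ref{cartesian}) is the cartesian square of $(A,A)$ over $A/s^2A$, show that $C$ is again of the type handled by (\ref{free2}) so that $C^{d-1}$ is cancellative, and then feed this into (\ref{q3}).

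First I would set $B=R[Y]/(Y^2-s^2Y)$, so that, since $s\in R$ and adjoining $Y$ commutes with adjoining $X,T$ and inverting $f\in R[X,T]\subseteq B[X,T]$, we have $C=B[X,T,1/f]$. By (\ref{cartesian1}), $B$ is an affine $k$-algebra with $\dim B=\dim R=d-2\geq 1$, and $B$ is a finite extension of $R$. The polynomial $f$ is still monic in $T$ over $B[X,T]$. The only point needing a short argument is that $f$ lies in no real maximal ideal of $B[X,T]$: since $B[X,T]$ is finite over $R[X,T]$, any maximal ideal $\mathfrak M$ of $B[X,T]$ contracts to a maximal ideal $\mathfrak m$ of $R[X,T]$; if $\mathfrak M$ is real then its residue field is $k$ (as $k$ is real closed), so the field $R[X,T]/\mathfrak m$ embeds in $k$, is finitely generated as a $k$-algebra, and hence equals $k$, i.e. $\mathfrak m$ is real. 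Thus $f\in\mathfrak M$ would force $f\in\mathfrak m$, contrary to hypothesis.

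Consequently $C=B[X,T,1/f]$ satisfies all the hypotheses of (\ref{free2}) with $B$ in the role of $R$, so $C^{d-1}$ is cancellative. Now I would apply (\ref{q3}) with $r=d-1=\mathrm{rank}\,P$, the ``doubled'' ring occurring there being precisely our $C$ (the variable called $X$ in (\ref{q3}) is our fresh variable $Y$). Its hypothesis ``$R^r$ is cancellative'' reads here ``$C^{d-1}$ is cancellative'', which we have just verified, so (\ref{q3}) yields that $\Aut(A\op P,sA)$, and hence $\Aut(A\op P)$, acts transitively on $\Um^1(A\op P,s^2A)$, as claimed.

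I do not expect a genuine obstacle: this is essentially a transcription of (\ref{z6})/(\ref{2.2}), the only things requiring care being the identification $C=B[X,T,1/f]$ together with the checks that $\dim B=d-2$ and that $f$ stays monic in $T$ and avoids real maximal ideals of $B[X,T]$, and the bookkeeping that makes rank $P=d-1$ the input to (\ref{q3}) and $\dim B=d-2$ the input to (\ref{free2}). Note that, unlike in (\ref{z6}), one obtains only transitivity on $\Um^1(A\op P,s^2A)$ and not full cancellation of $P$, since here there is no analogue of (\ref{kumar}) available to move an arbitrary element of $\Um(A\op P)$ into $\Um^1(A\op P,s^2A)$; this is why the statement is phrased in parallel with (\ref{1.4}) and (\ref{z4}).
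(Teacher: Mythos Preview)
Your proposal is correct and follows exactly the paper's own argument: set $C=A[Y]/(Y^2-s^2Y)=B[X,T,1/f]$ with $B=R[Y]/(Y^2-s^2Y)$, use finiteness of $B[X,T]$ over $R[X,T]$ to see that $f$ avoids real maximal ideals of $B[X,T]$, apply (\ref{free2}) to get $C^{d-1}$ cancellative, and then invoke (\ref{q3}). You have in fact supplied more detail than the paper does on the real--maximal--ideal step and on the bookkeeping, but the route is identical.
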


\begin{proof}
Let $C=A[Y]/(Y^2-s^2Y) = B[X,T,1/f]$, where $B=R[Y]/(Y^2-s^2Y)$. Since
$B[X,T]$ is a finite extension of $R[X,T]$, $f$ does not belongs to
any real maximal ideal of $B[X,T]$. By (\ref{free2}), $C^{d-1}$ is
cancellative. By (\ref{q3}), $\Aut(A\op P)$ acts transitively on
$\Um^1(A\op P, s^2A)$.  $\hfill \square$
\end{proof}

\begin{proposition}\label{free1}
Let $R$ be an affine $k$-algebra of dimension $d-2\geq 1$. Let
$A=R[X,T,f_1/f,\ldots,f_r/f]$, where $f,f_1,\ldots,f_r$ is a
$R[X,T]$-regular sequence. Assume that $f$ is a monic polynomial in
$T$ and $f$ does not belong to any real maximal ideal of
$R[X,T]$. Then $A^{d-1}$ is cancellative.
\end{proposition}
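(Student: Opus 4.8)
The plan is to mimic the proof of (\ref{free2}) step for step, now using the regular-sequence results (\ref{z5})(ii) and (\ref{kes4.4})(i) in place of the ``$R_f$'' results (\ref{z5})(i) and (\ref{kes4.4})(ii). Write $A=R[X][T,f_1/f,\ldots,f_r/f]$. Since $d-2\geq 1$, the ring $R[X]$ is an affine $k$-algebra of dimension $d-1\geq 2$ and $R[X,T]=R[X][T]$ is affine of dimension $d\geq 3$. Let $P$ be a stably free $A$-module of rank $d-1$; we must prove $P$ is free.

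First I would make $P\op A$ free. The hypotheses of (\ref{z5})(ii) hold with $R[X]$ as base ring: $f\in R[X,T]$ is monic in $T$, lies in no real maximal ideal of $R[X,T]$, and $f,f_1,\ldots,f_r$ is an $R[X,T]$-regular sequence. Hence $A^d$ is cancellative, so $P\op A$, being stably free of rank $d$, is free.

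Next I would show $P$ is extended from $R[X,T]$. From an isomorphism $P\op A\iso A^d=A\op A^{d-1}$, transporting the last coordinate vector produces $(a,p)\in\Um(A\op A^{d-1})$ with $P\iso(A\op A^{d-1})/(a,p)A$. Now $R[X,T]$ is affine of dimension $d\geq 3$, the module $A^{d-1}$ is extended from $R[X,T]$ and has rank $d-1=\dim R[X,T]-1$, $f$ avoids every real maximal ideal of $R[X,T]$, and $f,f_1,\ldots,f_r$ is an $R[X,T]$-regular sequence; so (\ref{kes4.4})(i) applies and yields $P\iso Q\otimes_{R[X,T]}A$ for some projective $R[X,T]$-module $Q$ of rank $d-1$.

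Finally I would descend freeness. Since $P\op A\iso A^d$, the $A$-module $(Q\op R[X,T])\otimes_{R[X,T]}A$ is free; as $A_f=R[X,T]_f$, the localization $(Q\op R[X,T])_f$ is a localization of this free module and hence free, so by (\ref{monic}) --- applied over $R[X][T]$ with the monic polynomial $f$ --- the module $Q\op R[X,T]$ is free. Thus $Q$ is a stably free $R[X,T]=R[X_1,X_2]$-module of rank $d-1>d-2=\dim R$, hence free by Ravi Rao's theorem (\cite{ravi1}, Theorem 2.5) that projective $R[X_1,\ldots,X_n]$-modules of rank $>\dim R$ are cancellative. Therefore $P=Q\ot A$ is free, i.e. $A^{d-1}$ is cancellative. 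The only delicate part is the bookkeeping --- checking that ``monic in $T$'' and ``in no real maximal ideal'' survive each base change and that the dimension inequalities $d-1\geq 2$, $d\geq 3$, $d-1>d-2$ match the hypotheses of (\ref{z5}), (\ref{kes4.4}) and Ravi Rao's theorem; all the substance is imported from those results.
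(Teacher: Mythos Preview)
Your proof is correct and follows essentially the same route as the paper's: use (\ref{z5})(ii) to get $P\op A$ free, then (\ref{kes4.4})(i) to extend $P$ from $R[X,T]$, then (\ref{monic}) via $A_f=R[X,T]_f$ to conclude $Q\op R[X,T]$ is free, and finally Ravi Rao's cancellation to finish. The only differences are cosmetic --- you spell out the unimodular-row interpretation of (\ref{kes4.4}) and the dimension bookkeeping, while the paper leaves these implicit.
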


\begin{proof}
Let $P$ be a stably free $A$-module of rank $d-1$. By (\ref{z5}),
$P\op A$ is free and hence by (\ref{kes4.4}), $P$ is extended from
$R[X,T]$. Let $Q$ be a projective $R[X,T]$-module such that $P\iso Q\ot
A$. Since $(Q\op R[X,T])\ot A$ is free, $(Q\op R[X,T])\ot R[X,T]_f$ is
free. Since $f$ is a monic polynomial, by (\ref{monic}), $Q \op
R[X,T]$ is free. By Ravi Rao's result (\cite{ravi1}, Theorem 2.5),
every projective $R[X_1,\ldots,X_n]$-module of rank $>\dim R$ is
cancellative. Hence $Q$ is free and therefore $P$ is free. $\hfill \square$
\end{proof}

\begin{theorem}\label{2.3}
Let $R$ be an affine $k$-algebra of dimension $d-2\geq 1$. Let
$A=R[X,T,f_1/f,\ldots,f_r/f]$, where $f,f_1,\ldots,f_r$ is a
$R[X,T]$-regular sequence. Assume that $f$ is a monic polynomial in
$T$ and $f$ does not belong to any real maximal ideal of $R[X,T]$. Let
$P$ be a projective $A$-module of rank $d-1$. Assume that there exists
a non-zero-divisor $s\in R$ satisfying the properties of (\ref{3.1}).
Then $\Aut(A\op P)$ acts transitively on $\Um^1(A\op P,s^2A)$.
\end{theorem}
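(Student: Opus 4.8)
The plan is to mimic the proofs of the three preceding theorems (\ref{z6}, \ref{2.2}, \ref{z7}) which all follow the same template: pass to the ``double'' ring obtained by adjoining $X$ with $X^2-s^2X=0$, recognize that this new ring is again of the same shape as $A$ but built over a finite extension of the base, invoke the appropriate cancellation result for free modules over that new ring, and then feed everything into Lemma \ref{q3}. Concretely, first I would set $B=R[Y]/(Y^2-s^2Y)$ and $C=A[Y]/(Y^2-s^2Y)$, and observe (using Corollary \ref{cartesian}, exactly as in the earlier proofs) that $C\cong B[X,T,f_1/f,\ldots,f_r/f]$, where the fractions $f_i/f$ are now read inside $B[X,T]$.

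Next I would check that $B$ satisfies all the hypotheses needed to apply Proposition \ref{free1} with base ring $B$ in place of $R$: namely that $B$ is an affine $k$-algebra of dimension $d-2$, that $f,f_1,\ldots,f_r$ remains a $B[X,T]$-regular sequence, and that $f$ still does not lie in any real maximal ideal of $B[X,T]$. Dimension is preserved because $B$ is integral (even finite, in fact free) over $R$ by Proposition \ref{cartesian1} and Corollary \ref{cartesian}. Regularity of the sequence is preserved because $B[X,T]$ is a free — hence faithfully flat — $R[X,T]$-module. The real-maximal-ideal condition transfers exactly as in the proof of \ref{2.2}: $B[X,T]$ is a finite extension of $R[X,T]$, so every maximal ideal of $B[X,T]$ contracts to a maximal ideal of $R[X,T]$, and since $k$ is real closed the residue field of a real maximal ideal upstairs forces the contracted ideal to be real as well; as $f$ avoids all real maximal ideals of $R[X,T]$, it avoids all real maximal ideals of $B[X,T]$. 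Applying Proposition \ref{free1} (with $B$ in the role of $R$, noting $\dim B=d-2\geq 1$) gives that $C^{d-1}$ is cancellative.

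Finally, since $s$ was chosen to satisfy the properties of Lemma \ref{3.1} and $R=A[X]/(X^2-s^2X)$ is — wait, rather $C=A[Y]/(Y^2-s^2Y)$ is — of the required form with $C^{d-1}$ cancellative (here $r=d-1=\operatorname{rank}P$), Lemma \ref{q3} applies verbatim and yields that $\Aut(A\op P,sA)$, and a fortiori $\Aut(A\op P)$, acts transitively on $\Um^1(A\op P,s^2A)$. This completes the argument.

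I do not expect any genuine obstacle here — the whole statement is a direct transcription of Theorem \ref{z7}'s proof with Proposition \ref{free1} substituted for Proposition \ref{free2}. The one point that needs a line of care (and which is handled identically in \ref{z6}, \ref{2.2}, \ref{z7}) is the verification that the ``no real maximal ideal'' hypothesis ascends from $R[X,T]$ to $B[X,T]$ along the finite extension $B[X,T]/R[X,T]$; everything else (dimension count, regular-sequence preservation via freeness, and the existence of the right $s$, which is assumed in the statement) is immediate.
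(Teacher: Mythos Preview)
Your proposal is correct and follows essentially the same route as the paper's proof: form $C=A[Y]/(Y^2-s^2Y)=B[X,T,f_1/f,\ldots,f_r/f]$ with $B=R[Y]/(Y^2-s^2Y)$, verify that the finiteness/freeness of $B[X,T]$ over $R[X,T]$ transfers both the regular-sequence and the no-real-maximal-ideal hypotheses, invoke Proposition~\ref{free1} to get $C^{d-1}$ cancellative, and conclude via Lemma~\ref{q3}. The only difference is that you spell out the auxiliary verifications (dimension, regularity, real-ideal ascent) in slightly more detail than the paper does.
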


\begin{proof}
Let $C=A[Y]/(Y^2-s^2Y) = B[X,T,f_1/f,\ldots,f_r/f]$, where
$B=R[Y]/(Y^2-s^2Y)$. Since $B$ is a finite extension of $R$, every
maximal ideal of $B[X,T]$ will contract to a maximal ideal of
$R[X,T]$. Hence $f$ does not belongs to any real maximal ideal of
$B[X,T]$. Also, as $B[X,T]$ is a free module over $R[X,T]$,
$f,f_1,\ldots,f_r$ is a regular sequence in $B[X,T]$. By
(\ref{free1}), $C^{d-1}$ is cancellative. Hence, by (\ref{q3}),
$\Aut(A\op P)$ acts transitively on $\Um^1(A\op P, s^2A)$.
$\hfill \square$
\end{proof}


\section{Over Laurent polynomial rings}

\begin{lemma}\label{z8}
Let $R$ be a reduced ring of dimension $d$ and let
$A=R[X_1,\ldots,X_n,Y_1^{\pm 1},\ldots,Y_m^{\pm 1}]$.  Let $P$ be a
projective $A$-module. Then there exists a non-zero-divisor $s\in R$
satisfying the properties of (\ref{3.1}).
\end{lemma}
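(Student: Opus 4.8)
The plan is to imitate the proofs of the analogous localization lemmas (\ref{z2}) and (\ref{z8}'s siblings) that appear earlier, namely (\ref{z2}) and (\ref{z3}). The key observation is that (\ref{3.1}) only requires us to produce a single element $s$ which (a) becomes a non-zero-divisor in $A_{red}$, (b) trivializes $P$ after inverting it, and (c) satisfies the diagonal/generation conditions (ii) and (iii) together with $(0:sA)=(0:s^2A)$; moreover it suffices to find such an $s$ inside $R$ because then condition (iv)–(v) are automatic (the image of $s$ in $R_{red}$, hence in $A_{red}=R_{red}[X_1,\ldots,X_n,Y_1^{\pm 1},\ldots]$, is a non-zero-divisor since $R$ is reduced and a non-zero-divisor of a reduced ring stays a non-zero-divisor after adjoining polynomial and Laurent variables).

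First I would pass to the total quotient ring: let $S$ be the set of non-zero-divisors of $R$, so that $S^{-1}R$ is a finite direct product of fields $K_1\times\cdots\times K_t$ (here we use that $R$ is reduced of finite dimension, hence has finitely many minimal primes). Then $S^{-1}A = \prod_i K_i[X_1,\ldots,X_n,Y_1^{\pm 1},\ldots,Y_m^{\pm 1}]$. Each factor is a localization of a polynomial ring over a field, hence a regular UFD, and in particular has trivial Picard group; so every finitely generated projective $S^{-1}A$-module of constant rank is stably free, and by Bass' theorem (\ref{bass}) — or rather by the Quillen–Suslin theorem in the Laurent case, or directly since stably free modules over such rings of the relevant rank are free — $S^{-1}P$ is free. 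Actually for the bare statement of (\ref{3.1}) it is enough that $S^{-1}P$ is free of rank $r$, which is what we get.

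Next, given a free basis of $S^{-1}P$, clear denominators: the basis is represented by finitely many elements of $P$ together with dual functionals in $P^*$, and the relations ``$\phi_i(p_j)=\delta_{ij}$ up to a common denominator'' and ``the $p_j$ generate $P$ up to a common denominator'' each involve only finitely many elements of $S$; taking a common multiple $s_0\in S$ of all these denominators we get elements $p_1,\ldots,p_r\in P$ and $\phi_1,\ldots,\phi_r\in\Hom(P,A)$ with $(\phi_i(p_j))=\diag(s_0,\ldots,s_0)$ and $s_0P\subseteq p_1A+\cdots+p_rA$, and with $P_{s_0}$ free. Finally, to arrange $(0:sA)=(0:s^2A)$ (condition (v)) — but here $s_0\in S$ is already a non-zero-divisor of $R$ and stays a non-zero-divisor of $A$, so $(0:s_0A)=0=(0:s_0^2A)$ and (iv)–(v) hold trivially. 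Hence $s=s_0$ works and the lemma follows, exactly as in (\ref{z2}).

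I do not expect a genuine obstacle here; the only point requiring a little care is the assertion that projective modules of constant rank over $S^{-1}A$ are free, which I would either cite (Quillen–Suslin for $S^{-1}A$ a Laurent polynomial ring over a field) or reduce to the polynomial case via the standard inversion trick. If one is squeamish about that, note the lemma is only ever applied in later sections to $A$-modules $P$ of a specific rank $\geq 2$, but as stated it is cleanest to invoke Quillen–Suslin over each field factor $K_i$. $\hfill \square$
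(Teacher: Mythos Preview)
Your approach is essentially identical to the paper's: localize at the set $S$ of non-zero-divisors of $R$, observe that projective modules over $K[X_1,\ldots,X_n,Y_1^{\pm 1},\ldots,Y_m^{\pm 1}]$ for a field $K$ are free (the paper cites Suslin \cite{Su1} and Swan \cite{Swan} for this, rather than calling it ``Quillen--Suslin''), and then clear denominators to produce $s\in S$. One small correction: the implication ``trivial Picard group $\Rightarrow$ every projective of constant rank is stably free'' is not valid in general, so that intermediate step in your second paragraph should be dropped; since you do in the end invoke the Suslin--Swan theorem directly, the argument is unaffected.
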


\begin{proof}
Let $S$ be the set of non-zero-divisors of $R$. Then $S^{-1}R$ is a
direct product of fields. Suslin (\cite{Su1}, Corollary 7.4) and Swan
(\cite{Swan}, Theorem 1.1) independently proved that if $K$ is a field
or a PID, then every projective $K[X_1,\ldots,X_n,Y_1^{\pm
1},\ldots,Y_m^{\pm 1}]$-modules are free. Hence $S^{-1}P$ is free. Now
we can choose $s\in S$ satisfying the properties of (\ref{3.1}).
$\hfill\square$
\end{proof}

\begin{theorem}\label{1.1}
Let $R$ be a ring of dimension $d$ and let
$A=R[X_1,\ldots,X_n,Y_1^{\pm 1},\ldots,Y_m^{\pm 1}]$.  Let $P$ be a
projective $A$-module of rank $\geq d$. By (\ref{z8}), choose a
non-zero-divisor $s\in R$ satisfying the properties of
(\ref{3.1}). Assume that $B^{d}$ is cancellative, where
$B=A[T]/(T^2-s^2T)$.  Then $P$ is cancellative.
\end{theorem}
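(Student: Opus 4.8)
The plan is to reduce the cancellation of $P$ to the transitivity of $\Aut(A\op P)$ on $\Um(A\op P)$, and then to peel the unimodular element down to one living in $\Um^1(A\op P, s^2A)$ by working modulo $s^2$, after which Lemma~\ref{q3} finishes the job. More precisely, I would first invoke (\ref{lin}): since $A=R[X_1,\ldots,X_n,Y_1^{\pm 1},\ldots,Y_m^{\pm 1}]$ and $P$ has rank $\ge d = \dim R$, but actually we need rank $\ge \max(2,d+1)$ for the Laurent polynomial transitivity result — here the rank is only $\ge d$, so the direct appeal to (\ref{lin}) is not available for $A\op P$ itself when $\mathrm{rank}\,P = d$. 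Instead, the right move is: by (\ref{bass}) applied to $A$ (which has dimension $d+n+m$, much larger than $d$)... no — the point of the whole machinery is precisely that we cannot use Bass directly. So the correct first step is to note that $A\op P$ has rank $d+1$, and one wants to show $\Aut(A\op P)$ acts transitively on $\Um(A\op P)$; combined with $A^d$ being stably cancellative this gives cancellation of $P$.

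Here is the approach I would actually carry out. We may assume $R$ (hence $A$) reduced, so by (\ref{z8}) the chosen $s\in R$ is a non-zero-divisor and $\dim R/s^2R < d$. Let $(a,p)\in \Um(A\op P)$. Reduce modulo $s^2A$: writing $\bar{\ }$ for reduction mod $s^2A$, the ring $\bar A = (R/s^2R)[X_1,\ldots,X_n,Y_1^{\pm1},\ldots,Y_m^{\pm1}]$ has the form covered by (\ref{lin}) with base ring of dimension $\le d-1$, and $\bar P$ has rank $d \ge \max(2,(d-1)+1)$, so $E(\bar A\op \bar P)$ acts transitively on $\Um(\bar A\op \bar P)$. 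Hence there is $\bar\sigma\in E(\bar A\op \bar P)$ with $\bar\sigma(\bar a,\bar p) = (1,0)$. Since $E(\bar A\op\bar P)$ is generated by transvections, (\ref{trans}) lets us lift $\bar\sigma$ to some $\Theta\in\Aut(A\op P)$; then $\Theta(a,p)\in\Um^1(A\op P, s^2A)$. Now apply Lemma~\ref{q3} with $r = \mathrm{rank}\,P = d$: the hypothesis there is that $R'^{\,r}$ is cancellative where $R' = A[X']/(X'^2 - s^2 X')$, and by Corollary~\ref{cartesian} this $R'$ is exactly $B = A[T]/(T^2 - s^2 T)$, which is cancellative in degree $d$ by assumption. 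So $\Aut(A\op P, sA)$ acts transitively on $\Um^1(A\op P, s^2A)$, and in particular there is $\Theta_1\in\Aut(A\op P)$ with $\Theta_1\Theta(a,p) = (1,0)$. Thus $\Aut(A\op P)$ acts transitively on $\Um(A\op P)$, which together with the fact that $A\op P$ is extended/stably free behaviour gives $P\cong Q$ whenever $P\op A\cong Q\op A$ — i.e. $P$ is cancellative.

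The one subtlety — and the step I expect to be the main obstacle to state cleanly — is the rank bookkeeping: for the reduction-mod-$s^2$ step I need $\mathrm{rank}\,P \ge \max(2, \dim(R/s^2R) + 1)$, and since $\dim R/s^2R \le d-1$ and $\mathrm{rank}\,P \ge d$, this holds (and $d\ge 2$ is forced because for $d\le 1$ everything is classical, or because Lemma~\ref{3.1} and the hypotheses are vacuous/trivial). When $\mathrm{rank}\,P > d$ strictly, one can alternatively finish directly by (\ref{lin}) without any of the $s$-machinery, so the content is really the boundary case $\mathrm{rank}\,P = d$, and that is exactly where the assumption ``$B^d$ is cancellative'' is consumed through Lemma~\ref{q3}. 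I would also remark that the passage from "$\Aut(A\op P)$ transitive on $\Um(A\op P)$" to "$P$ cancellative" is the standard argument: if $P\op A\cong Q\op A$ then $Q$ is a quotient $A\op P/(a,p)A$ for some $(a,p)\in\Um(A\op P)$, and moving $(a,p)$ to $(1,0)$ by an automorphism identifies $Q$ with $P$. Assembling these pieces in the order (reduce mod $s^2$ via (\ref{lin}) + (\ref{trans}); then Lemma~\ref{q3} via Corollary~\ref{cartesian}; then conclude) gives the proof.
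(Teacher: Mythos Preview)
Your proposal is correct and follows essentially the same route as the paper: reduce modulo $s^2A$, use Lindel's result (\ref{lin}) over $(R/s^2R)[X_1,\ldots,X_n,Y_1^{\pm1},\ldots,Y_m^{\pm1}]$ together with (\ref{trans}) to move any unimodular element into $\Um^1(A\op P,s^2A)$, and then finish with Lemma~\ref{q3} using the hypothesis that $B^d$ is cancellative. Your extra remarks on the rank bookkeeping (namely that $\dim R/s^2R\le d-1$ so Lindel applies, and that the case $\mathrm{rank}\,P>d$ is already covered by (\ref{lin}) directly) are accurate refinements that the paper leaves implicit.
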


\begin{proof}
By (\ref{q3}), $\Aut(A\op P)$ acts transitively on $\Um^1(A\op
P,s^2A)$. Let ``bar'' denotes reduction modulo $s^2A$. Since $\dim
R/s^2R <d$, by (\ref{lin}), $E(\ol A\op \ol P)$ acts transitively on
$\Um(\ol A\op \ol P)$. Further, by (\ref{trans}), every element of
$E(\ol A\op \ol P)$ can be lifted to an element of $\Aut(A\op
P)$. Hence, every element of $\Um(A\op P)$ can be taken to an element
of $\Um^1(A\op P,s^2A)$ by an automorphism of $A\op P$. This proves
that $P$ is cancellative.  $\hfill \square$
\end{proof}

\begin{theorem}\label{z9}
Let $R$ be an affine algebra of dimension $d$ over an algebraically
closed field $k$ with $1/d!\in k$. Let $A=R[X,X^{-1}]$. Then
every projective $A$-module of rank $d$ is cancellative.
\end{theorem}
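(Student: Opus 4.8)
The plan is to deduce this from Theorem \ref{1.1} by checking that its hypotheses are met for $A = R[X,X^{-1}]$. First I note $A = R[X_1,\dots,X_n,Y_1^{\pm 1},\dots,Y_m^{\pm 1}]$ with $n=0$, $m=1$, $Y_1 = X$; since $\dim R = d$ and $P$ has rank $d$, we are in the range $\text{rank}\, P \geq d$ required by (\ref{1.1}). We may assume $R$ is reduced (cancellation is unaffected by passing to $A_{\text{red}}$, since nilpotents lift isomorphisms). By (\ref{z8}) applied to $A = R[X,X^{-1}]$, we obtain a non-zero-divisor $s \in R$ satisfying the properties of (\ref{3.1}) for $P$.

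The heart of the matter is to verify the hypothesis of (\ref{1.1}) that $B^d$ is cancellative, where $B = A[T]/(T^2 - s^2 T)$. Here is where the cartesian-square machinery enters: by (\ref{cartesian}), $A[T]/(T^2 - s^2 T)$ is the fibre product of $(A,A)$ over $A/s^2A$, and more to the point, $B = A[T]/(T^2 - s^2T) = \big(R[T]/(T^2 - s^2 T)\big)[X,X^{-1}]$. Writing $R' = R[T]/(T^2 - s^2 T)$, Proposition \ref{cartesian1} identifies $R'$ as a finite extension of $R$, hence an affine $k$-algebra of dimension $d$; and since $R'$ is a free $R$-module of rank $2$, we still have $1/d! \in k$ and $R'$ has the same dimension $d$. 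Thus $B = R'[X,X^{-1}]$ with $R'$ an affine $k$-algebra of dimension $d$ over an algebraically closed field with $1/d! \in k$.

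At this point I want to invoke the case $P = B^d$ of the theorem being proved for the ring $R'$ in place of $R$ — but that is circular. So instead the argument must show directly that $(R'[X,X^{-1}])^d$ is cancellative; this is the main obstacle. The natural route is to apply (\ref{1.3}) with the monic polynomial $f(T') = T'$ (or rather, realize $R'[X,X^{-1}]$ as a localization $R''[T', 1/f]$ of a polynomial extension): indeed $R'[X,X^{-1}] = R'[X]_{X}$, and $X$ is a monic polynomial in $R'[X]$, so by part $(i)$ of (\ref{1.3}) with $R$ there taken to be $R'$ and $f = X$, we conclude $(R'[X,X^{-1}])^d = B^d$ is cancellative. (The hypotheses of (\ref{1.3}) hold: $R'$ is an affine $k$-algebra of dimension $d$ with $1/d! \in k$, and $X \in R'[X]$ is monic.) With $B^d$ cancellative established, Theorem \ref{1.1} applies and gives that $P$ is cancellative, completing the proof. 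The only subtlety to check carefully is that $R' = R[T]/(T^2 - s^2T)$ genuinely has dimension $d$ and is affine over $k$ — but this is exactly Proposition \ref{cartesian1} together with its observation that finite extensions of affine $k$-algebras are affine $k$-algebras.
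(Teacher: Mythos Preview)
Your proof is correct and follows essentially the same approach as the paper: reduce to the reduced case, choose $s\in R$ via (\ref{z8}), rewrite $B=A[T]/(T^2-s^2T)$ as $B_1[X,X^{-1}]$ with $B_1=R[T]/(T^2-s^2T)$ an affine $k$-algebra of dimension $d$, invoke (\ref{1.3}) to get $B^d$ cancellative, and conclude via (\ref{1.1}). The paper's argument is the same, only without your explicit digression on circularity.
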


\begin{proof}
We can assume that $A$ is reduced. Let $P$ be a projective $A$-module
of rank $d$. By (\ref{z8}), choose a non-zero-divisor $s\in R$
satisfying the properties of (\ref{3.1}). Let
$B=A[T]/(T^2-s^2T)=B_1[X,X^{-1}]$, where $B_1=R[T]/(T^2-s^2T)$ is an
affine algebra over $k$ of dimension $d$. By (\ref{1.3}), $B^d$ is
cancellative. Hence, applying (\ref{1.1}), we get that
$P$ is cancellative. This proves the result.  $\hfill \square$
\end{proof}

\begin{theorem}\label{z10}
Let $R$ be a ring of dimension $d$. Let $A=R[X_1,\ldots,X_n,Y_1^{\pm
1},\ldots,Y_m^{\pm 1}]$ and let $P$ be a projective $A$-module of rank
$\geq d$. By (\ref{z8}), choose a non-zero-divisor $s\in R$ satisfying
the properties of (\ref{3.1}). Assume that $B^{d}$ is cancellative,
where $B=B_1[X_1,\ldots,X_n,Y_1^{\pm 1},\ldots,Y_m^{\pm 1}]$ and
$B_1=R[T]/(T^2-s^2T)$.  Then the natural map $\Aut (P) \ra
\Aut_{A/(Y_m-1)} (P/(Y_m-1)P)$ is surjective.
\end{theorem}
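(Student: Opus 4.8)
The plan is to follow the same two-stage strategy used in the proof of Theorem \ref{z9} (i.e. the pattern of Theorem \ref{1.1}), but now keeping track of the $Y_m$-variable throughout so as to obtain surjectivity of the restriction map onto $\Aut_{A/(Y_m-1)}(P/(Y_m-1)P)$ rather than mere transitivity on $\Um(A\op P)$. First I would reduce to the case where $A$ (equivalently $R$) is reduced, so that the chosen $s$ from \ref{z8} is a non-zero-divisor and Lemma \ref{q3}, together with Proposition \ref{w001}, is available. Set $B_1=R[T]/(T^2-s^2T)$ and $B=B_1[X_1,\ldots,X_n,Y_1^{\pm 1},\ldots,Y_m^{\pm 1}]$; by \ref{cartesian}, $B$ is exactly the cartesian fibre product of $(A,A)$ over $A/s^2A$, and by hypothesis $B^d$ is cancellative.

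Let $\sigma\in\Aut_{A/(Y_m-1)}(P/(Y_m-1)P)$ be given. The key point is that I only need to produce a lift of $\sigma$ modulo the ideal $s^2A$, and then feed this into the $EL^1$-machinery of Wiemers. Concretely: let ``bar'' denote reduction modulo $s^2A$, so $\ol A = (R/s^2R)[X_\bullet,Y_\bullet^{\pm1}]$ has the polynomial-Laurent form over a ring of dimension $<d$, and $\ol P$ has rank $\geq d \geq \max(2,\dim(\ol R)+1)$. By Proposition \ref{3.2}(ii) (applied to $\ol A$, $\ol P$), the natural map $\Aut_{\ol A}(\ol P)\ra \Aut_{\ol A/(Y_m-1)}(\ol P/(Y_m-1)\ol P)$ is surjective, so $\bar\sigma$ (the reduction of $\sigma$ mod $s^2$) lifts to some $\tau_0\in\Aut_{\ol A}(\ol P)$; and by Proposition \ref{trans}, $\tau_0$ in turn lifts to some $\tau\in\Aut_A(P)$. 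Replacing $\sigma$ by $(\tau|_{A/(Y_m-1)})^{-1}\sigma$, I may assume $\sigma\equiv \mathrm{id}$ modulo $(s^2A + (Y_m-1)A)$, i.e. $\sigma\in\Aut_{A/(Y_m-1)}(P/(Y_m-1)P)$ is congruent to the identity modulo $s^2$.

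Now I would use Lemma \ref{q3} in its relative form. Using property \ref{3.1}(ii)-(iii) for the $p_i,\phi_i$, a unit $\sigma$ congruent to the identity mod $s^2$ over $A/(Y_m-1)$ can be encoded by an element of $\GL_{r+1}(A/(Y_m-1), s^2\cdot A/(Y_m-1))$ of the form $1+TN^2$ with $N=\mathrm{diag}(1,s,\ldots,s)$; patching this matrix with the identity over $A/s^2A$ (the cartesian square of \ref{cartesian}) gives a matrix over $B/(Y_m-1)B$. Since $B^d$ is cancellative and $\dim B_1 = d$, Proposition \ref{3.2}(i) applies to $B$: $EL^1(B\op B^d, Y_m-1)$ acts transitively on $\Um^1(B\op B^d, Y_m-1)$, which is the ingredient that upgrades cancellation to liftability of automorphisms along $Y_m-1$. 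Projecting the resulting $EL^1$-element back down the two legs $B\ra A$ of the cartesian square and applying the group homomorphism $\Phi$ of \ref{w001} converts it into an element of $\Aut_A(P)$ whose restriction mod $(Y_m-1)$ realizes $\sigma$.

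The main obstacle I anticipate is bookkeeping on two fronts at once: ensuring that the matrix produced by \ref{3.2}(i) over $B$ really lies in the $EL^1$ generated by $\Delta_q$ and $\Gamma_{a\phi}$ with $a\in(Y_m-1)$, so that after projecting via the cartesian square it still lands in $\GL_{r+1}(A,s^2A)$ (this is the compatibility of the $s^2$-congruence with the $(Y_m-1)$-congruence), and then checking that $\Phi$ of \ref{w001} carries it to an automorphism of $A\op P$ — not merely of $A^{r+1}$ — that restricts correctly modulo $(Y_m-1)$. Both steps are of the same flavour as the computation at the end of the proof of \ref{q3}, so I expect them to go through, but the simultaneous handling of the two ideals $s^2A$ and $(Y_m-1)A$ is where the argument needs care.
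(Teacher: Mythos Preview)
Your overall two-stage strategy (work modulo $s^2$, then use the cartesian square $B$) is in the right spirit, and your reduction in the second paragraph to $\sigma\equiv\mathrm{id}$ mod $s^2$ via Proposition~\ref{3.2}(ii) and Proposition~\ref{trans} is fine. The third paragraph, however, contains two genuine gaps.

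First, the step ``a unit $\sigma$ congruent to the identity mod $s^2$ \ldots\ can be encoded by an element of $\GL_{r+1}(A/(Y_m-1), s^2\cdot A/(Y_m-1))$ of the form $1+TN^2$'' is not justified. The homomorphism $\Phi$ of Proposition~\ref{w001} goes from matrices to automorphisms of $A\oplus P$, not the other way, and there is no reason an arbitrary $\sigma\in\Aut(P/(Y_m-1)P)$ with $\sigma\equiv\mathrm{id}$ mod $s^2$ should lie in the image. The elements $p_1,\ldots,p_r$ are only a basis of $P_s$, not of $P$, so you cannot simply read off an integral matrix for $\sigma$. Second, your invocation of Proposition~\ref{3.2}(i) for $B$ is invalid: $B_1$ has dimension $d$, so \ref{3.2} requires the projective module to have rank $\geq d+1$, whereas $B^d$ has rank $d$. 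The hypothesis ``$B^d$ is cancellative'' does not substitute for this dimension bound; cancellation is strictly weaker than $EL^1$-transitivity.

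The paper circumvents both problems by \emph{not} attempting to lift $\sigma$ directly. Instead it first lifts $\mathrm{id}_{\ol A}\oplus\sigma$ to some $\theta\in\Aut(A\oplus P)$ via \ref{3.2}(ii), and observes that $\theta(1,0)=(h,p)\in\Um(A\oplus P,Y_m-1)$. It then suffices to find $\mu\in\Aut(A\oplus P,Y_m-1)$ with $\mu(h,p)=(1,0)$, since $\mu\theta$ will then fix $(1,0)$ and induce the desired lift in $\Aut(P)$. This reformulates the automorphism-lifting problem as a \emph{unimodular-element} problem, namely transitivity of $\Aut(A\oplus P,Y_m-1)$ on $\Um(A\oplus P,Y_m-1)$, which is exactly what the machinery of Lemma~\ref{q3} and Proposition~\ref{w001} is designed for. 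After using \ref{3.2}(i) over $A/s^3A$ (where the dimension \emph{has} dropped) to normalise $(h,p)$ modulo $s^3(Y_m-1)$, one extracts a unimodular row $(f,f_1,\ldots,f_d)\in\Um_{d+1}(A,s^2(Y_m-1))$, applies the cartesian-square argument with $B^d$ cancellative to get $\Delta\in\GL_{d+1}(A,s^2A)$ carrying it to $(1,0,\ldots,0)$, and then corrects $\Delta$ to $\Theta=\Delta\cdot\Delta|_{Y_m=1}^{-1}\in\GL_{d+1}(A,s^2(Y_m-1))$ before feeding it into $\Phi$. The passage to unimodular rows is the missing idea in your proposal.
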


\begin{proof}
When rank $P> d$, the result follows from (\ref{3.2}). Hence, we
assume that rank $P=d$. Let ``bar'' denotes reduction modulo
$(Y_m-1)A$. It is easy to see that we can assume that $R$ is reduced.

Let $\tau\in \Aut_{\ol A}(\ol P)$, then by (\ref{3.2}), we can lift
$id_{\ol A}\op \tau \in \Aut_{\ol A}(\ol A\op \ol P)$ to an
automorphism $\theta$ of $A\op P$. Let $\theta (1,0)=(h,p) \in
\Um(A\op P, Y_m-1)$. Assume that there exists $\mu \in \Aut(A\op P,
Y_m-1)$ such that $\mu (h,p)=(1,0)$. Then, we have the following
commutative diagram

$$\xymatrix{ A \op P \ar [r]^\theta \ar [d] & A \op P \ar [r]^\mu \ar [d]
& A\op P\ar [d] \\ \ol A \op \ol P \ar [r]_{id_{\ol A} \op \sigma} & \ol
A \op \ol P \ar [r]^{id} & \ol A\op \ol P }$$

Note that $\Psi=\mu\theta \in \Aut (A\op P)$ is a lift of $id_{\ol
A}\op \tau$. Further $\Psi(1,0)=(1,0)$. Hence $\Psi$ induces an
automorphism $\Psi\in \Aut (P)$ which is a lift of $\tau$. Hence, it
is enough to show that $\Aut(A\op P, Y_m-1)$ acts transitively on
$\Um(A\op P,Y_m-1)$.

Let $(f,q)\in \Um(A\op P,Y_m-1)$. 
Let ``tilde'' denote reduction modulo $s^3A$.  Since $\dim R/s^3R < \dim
R$, by (\ref{3.2}), $EL^1( \wt A\op \wt P,Y_m-1)$
acts transitively on $\Um^1(\wt A\op \wt P,Y_m-1)$. After lifting the
$EL^1(\wt A\op \wt P,Y_m-1)$ transformations, we may assume that
$(f,q)=(1,0)$ modulo $s^3 (Y_m-1)A$. 

Since $q\in s^3 (Y_m-1)P$, with the notation in (\ref{3.1}), we can
write $q=f_1p_1+\ldots+f_dp_d$ for some $f_i\in s^2 (Y_m-1)A$,
$i=1,\ldots,d$. Note that $(f,f_1,\ldots,f_d)\in \Um_{d+1}(A,s^2(Y_m-1))$.

Since $B^d$ is cancellative, where $B$ is the cartesian square of
$A,A$ over $A/(s^2)$, as in the proof of (\ref{q3}), there exists
$\Delta\in \GL_{d+1}(A,s^2A)$ such that
$(f,f_1,\ldots,f_d)\Delta=(1,0,\ldots,0)$. Since
$(f,f_1,\ldots,f_d)\in \Um_{d+1}(A,s^2(Y_m-1))$, going modulo $(Y_m-1)$, we get
$(1,0,\ldots,0)\Delta(Y_m-1)=(1,0,\ldots,0)$. Hence, if $\Theta=\Delta
\Delta(Y_m-1)^{-1}$, then $(f,f_1,\ldots,f_d)\Theta=(1,0,\ldots,0)$
and $\Theta\in \GL_{d+1}(A,s^2(Y_m-1))$.

Let $\Delta=1+TN^2$, where $T$ is some matrix and $N=$ diagonal
$(1,s,\ldots,s)$. Applying $(\ref{w001})$ with $n=d+1$ and
$(s_1,\ldots,s_n)=(1,s,\ldots,s)$, we get $\Psi=\Phi(\delta) \in \Aut (A\op
P,Y_m-1)$ and $\Psi(f,f_1p_1+\ldots+f_dp_d)=(1,0)$. This proves the result.
$\hfill \square$
\end{proof}

As an application of (\ref{z9}) and (\ref{z10}), we get the following
result.

\begin{corollary}\label{z11}
Let $R$ be an affine algebra of dimension $d$ over an algebraically
closed field $k$ with $1/d!\in k$. Let $A=R[X,X^{-1}]$ and let $P$ be
a projective $A$-module of rank $d$. Then the natural map $\Aut(P) \ra
\Aut_{A/(Y-1)}(P/(Y-1)P)$ is surjective.
\end{corollary}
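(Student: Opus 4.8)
The plan is to deduce Corollary \ref{z11} directly from the two theorems \ref{z9} and \ref{z10}, exactly as the paper's phrase ``as an application'' suggests. First I would set $A = R[X,X^{-1}]$, which is the Laurent polynomial ring $R[X_1,\dots,X_n,Y_1^{\pm 1},\dots,Y_m^{\pm 1}]$ in the case $n=0$, $m=1$, $Y_1 = X$. By (\ref{z8}), since we may assume $R$ is reduced, we can choose a non-zero-divisor $s \in R$ satisfying the properties of (\ref{3.1}) for the given projective $A$-module $P$ of rank $d$. Set $B_1 = R[T]/(T^2 - s^2T)$; this is an affine $k$-algebra of dimension $d$ (by (\ref{cartesian1}) it is the cartesian square of $(A,A)$ over $A/s^2A$ in the $R$-level, and being integral over $R$ it has the same dimension), and $B = B_1[X,X^{-1}]$.

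Next I would verify the hypothesis of (\ref{z10}), namely that $B^d$ is cancellative. This is precisely the content of Theorem \ref{z9} applied to the $k$-algebra $B_1$ in place of $R$: indeed $B_1$ is an affine algebra of dimension $d$ over the algebraically closed field $k$ with $1/d! \in k$, so by (\ref{z9}) every projective $B_1[X,X^{-1}]$-module of rank $d$ is cancellative; in particular the free module $B^d = (B_1[X,X^{-1}])^d$ is cancellative. Alternatively one can invoke (\ref{1.3}) directly with $C = B = B_1[X,X^{-1}]$, which is of the form covered there after noting $B_1[X,X^{-1}]$ is itself a localization of a polynomial ring; but routing through (\ref{z9}) is cleanest.

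With this hypothesis in hand, Theorem \ref{z10} (taken with $n = 0$, $m = 1$, $Y_m = X$) immediately gives that the natural map $\Aut(P) \to \Aut_{A/(X-1)}(P/(X-1)P)$ is surjective, which is exactly the assertion of the corollary (with $Y = X$). So the proof is essentially a two-line citation: invoke (\ref{z9}) to get $B^d$ cancellative, then invoke (\ref{z10}).

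I do not anticipate a real obstacle here, since both ingredients are already established; the only points requiring a word of care are (a) confirming that $B_1 = R[T]/(T^2-s^2T)$ genuinely has dimension $d$ and is affine over $k$, which follows from (\ref{cartesian1}) and integrality over $R$, and (b) matching the notation, i.e.\ that $A = R[X,X^{-1}]$ is the special case of the Laurent polynomial setup of (\ref{z10}) with a single invertible variable $Y_1 = X$, and that ``reduction modulo $(Y_m - 1)$'' there is reduction modulo $(X-1)$ here. Once these bookkeeping points are noted, the result drops out.
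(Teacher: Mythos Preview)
Your proposal is correct and follows exactly the route the paper intends: the paper's entire proof is the sentence ``As an application of (\ref{z9}) and (\ref{z10}), we get the following result,'' and you have faithfully unpacked that sentence, including the bookkeeping that $B_1=R[T]/(T^2-s^2T)$ is an affine $k$-algebra of dimension $d$ (by (\ref{cartesian1})) so that (\ref{z9}) or equivalently (\ref{1.3}) applies to give $B^d$ cancellative, after which (\ref{z10}) with $n=0$, $m=1$, $Y_1=X$ yields the conclusion.
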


We end this section by stating four results which follow directly from
(\ref{w002}) by applying (\ref{c1}), (\ref{z9}, \ref{z10}),
(\ref{z1}(iii)) and (\ref{a1}) respectively. Note that (\ref{gen}(i))
generalizes a result of Keshari (\cite{kes07}, Proposition A.9), where
it is proved when $A$ is a smooth affine algebra of dimension $d=2$
and the determinant of $P$ is trivial.

\begin{theorem}
Let $A$ be an affine algebra of dimension $d$ over a $C_1$-field
$k$ with $1/d!\in A$ and $R=A[X_1,\ldots,X_n]$.  Then every
projective $R$-module of rank $d$ is cancellative.
\end{theorem}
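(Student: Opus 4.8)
The plan is to reduce to the already–proved cancellation statement over a $C_1$-field and then invoke Wiemers' transfer result $(\ref{w002})$. Concretely, let $A$ be an affine algebra of dimension $d$ over a $C_1$-field $k$ with $1/d! \in A$ (so in particular $\mathrm{char}\,k$ is $0$ or $>d$), and set $R = A[X_1,\ldots,X_n]$. We want every projective $R$-module of rank $d$ to be cancellative. First I would recall from Remark $(\ref{c1})$ that under these hypotheses $A^d$ is cancellative; indeed Suslin's result as stated there applies since $k$ is $C_1$ (hence $cd(k) \le 1$) and the characteristic condition holds.

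Next I would upgrade ``$A^d$ cancellative'' to ``every projective $A$-module of rank $d$ is cancellative.'' This is exactly what $(\ref{q5})$ gives us, provided we check its hypothesis: for every finite extension $S$ of $A$, $S^d$ must be cancellative. But a finite extension of an affine $k$-algebra of dimension $d$ is again an affine $k$-algebra of dimension $d$, and the $C_1$ and characteristic hypotheses on $k$ are unchanged, so $S^d$ is cancellative by the same Remark $(\ref{c1})$. Hence $(\ref{q5})$ applies and every projective $A$-module of rank $d$ is cancellative.

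Finally I would apply Theorem $(\ref{w002})$ with $m = 0$: since $R'$-modules (here $R' = A$) of rank $d = \dim A$ are cancellative and $1/d! \in A$, the ``in particular'' clause of $(\ref{w002})$ yields that projective $A[X_1,\ldots,X_n]$-modules of rank $d$ are cancellative, which is the claim. The only point requiring a word of care is that $(\ref{w002})$ is stated for a ring $R$ of dimension $d$ with $1/d! \in R$ and concludes about $R[X_1,\ldots,X_n]$-modules of rank $d$; applying it with $R = A$ is immediate once we know the rank-$d$ cancellation over $A$ itself, so there is no real obstacle here — the substantive input is the two-step passage (Remark $(\ref{c1})$ then $(\ref{q5})$) from stably free to arbitrary rank-$d$ cancellation over the base $A$.
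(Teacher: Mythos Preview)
Your proposal is correct and follows essentially the same route as the paper: the paper states that this theorem follows directly from $(\ref{w002})$ by applying $(\ref{c1})$, and Remark $(\ref{c1})$ already packages together Suslin's stably-free result with the upgrade (via $(\ref{q4})$, equivalently $(\ref{q5})$) to arbitrary rank-$d$ projectives. Your two-step presentation, first $(\ref{c1})$ for $A^d$ and then $(\ref{q5})$ for all rank-$d$ projectives, simply makes explicit what the remark records, after which the appeal to $(\ref{w002})$ is identical.
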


\begin{theorem}\label{gen}
Let $A$ be an affine algebra of dimension $d$ over an algebraically
closed field $k$ with $1/d!\in k$ and $R=A[X_1,\ldots,X_n,Y^{\pm
1}]$. Let $P$ be a projective $R$-module of rank $d$. Then 

$(i)$ $P$ is cancellative and 

$(ii)$ the natural map $\Aut(P) \ra\Aut(P/(Y-1)P)$ is surjective.
\end{theorem}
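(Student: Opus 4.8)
The plan is to deduce Theorem \ref{gen} directly from Wiemers' cancellation result (\ref{w002}) together with our Laurent-polynomial results (\ref{z9}) and (\ref{z10}). Writing $R=A[X_1,\ldots,X_n,Y^{\pm 1}]$, the idea is to first collapse the polynomial variables $X_1,\ldots,X_n$ using (\ref{w002}), leaving us with a statement about the Laurent polynomial ring $A[Y^{\pm 1}]$, and then apply (\ref{z9}) and (\ref{z10}) to finish.

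For part $(i)$, let $P$ be a projective $R$-module of rank $d$. By (\ref{w002}), applied with the base ring $A$ (which has dimension $d$ and satisfies $1/d!\in A$) and the indeterminates $X_1,\ldots,X_n$ together with $Y^{\pm 1}$, the module $P$ is cancellative provided $P/(X_1,\ldots,X_n)P$ is cancellative; and more precisely the final sentence of (\ref{w002}) says that if projective modules of rank $d$ over $A[Y^{\pm 1}]$ are cancellative, then so are projective modules of rank $d$ over $A[X_1,\ldots,X_n,Y^{\pm 1}]=R$. So it suffices to know that projective $A[Y^{\pm 1}]$-modules of rank $d$ are cancellative, and this is exactly (\ref{z9}) applied to the affine $k$-algebra $A$ of dimension $d$ with $1/d!\in k$. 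This settles $(i)$.

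For part $(ii)$, I would like to reduce the surjectivity of $\Aut(P)\ra\Aut(P/(Y-1)P)$ to the corresponding statement over $A[Y^{\pm 1}]$. The cleanest route is to invoke (\ref{z10}): with the notation there, take the base ring to be $A$, so that $R=A[X_1,\ldots,X_n,Y^{\pm 1}]$ is the ring in question (here $m=1$, $Y_1=Y$, and $\dim A=d$). Choosing a non-zero-divisor $s\in A$ satisfying (\ref{3.1}) via (\ref{z8}), the hypothesis of (\ref{z10}) requires that $B^d$ be cancellative, where $B=B_1[X_1,\ldots,X_n,Y^{\pm 1}]$ and $B_1=A[T]/(T^2-s^2T)$. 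But $B_1$ is a finite extension of $A$, hence an affine $k$-algebra of dimension $d$ with $1/d!\in k$, so by part $(i)$ (applied to $B_1$ in place of $A$) projective $B$-modules of rank $d$ are cancellative; in particular $B^d$ is cancellative. Thus (\ref{z10}) applies and gives the surjectivity of $\Aut(P)\ra\Aut_{R/(Y-1)}(P/(Y-1)P)$, which is $(ii)$.

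The main point to be careful about — the only real obstacle — is the bookkeeping of hypotheses when passing to the finite extension $B_1=A[T]/(T^2-s^2T)$: one must check that $B_1$ is again an affine $k$-algebra of the same dimension $d$ (which follows from \ref{cartesian1}, since $B_1$ is the fiber product of $(A,A)$ over $A/s^2A$ by \ref{cartesian}) and that the condition $1/d!\in k$ is a condition on the ground field $k$ and is therefore inherited automatically. Once this is in place, parts $(i)$ and $(ii)$ follow formally, and no further computation is needed.
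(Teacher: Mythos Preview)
Your argument is correct and matches the paper's own route: part $(i)$ is (\ref{w002}) combined with (\ref{z9}), and part $(ii)$ is (\ref{z10}) with its hypothesis verified by applying part $(i)$ to the finite extension $B_1=A[T]/(T^2-s^2T)$. One small correction: your appeal to the ``final sentence'' of (\ref{w002}) is a misreading --- that sentence only covers pure polynomial extensions $R[X_1,\ldots,X_n]$ of a $d$-dimensional $R$, and $A[Y^{\pm 1}]$ has dimension $d+1$ --- but this is harmless since the \emph{first} sentence of (\ref{w002}), which you also invoke, already gives what you need (reduce $P$ modulo $(X_1,\ldots,X_n)$ and apply (\ref{z9}) to the resulting $A[Y^{\pm 1}]$-module).
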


\begin{theorem}
Let $A$ be a finitely generated algebra over $\BZ$ of dimension
$d$ with $1/d! \in A$. Then all projective $A[X_1,\ldots,X_n]$-modules
of rank $d$ are cancellative.
\end{theorem}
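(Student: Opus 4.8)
The plan is to reduce the cancellation statement for projective $A[X_1,\ldots,X_n]$-modules of rank $d$ to the cancellation of $A^d$, which holds by the result of Suslin-Vaserstein \cite{su} (Theorem \ref{z1}(iii)) applied to the finitely generated $\BZ$-algebra $A$ of dimension $d$. The key observation is that Theorem \ref{w002} does exactly this kind of polynomial-variable reduction: if $R$ is a ring of dimension $d$ with $1/d! \in R$ and $B = R[X_1,\ldots,X_n,Y_1^{\pm 1},\ldots,Y_m^{\pm 1}]$, then a projective $B$-module $P$ of rank $d$ is cancellative provided $P/(X_1,\ldots,X_n)P$ is cancellative; in particular, if projective $R$-modules of rank $d$ are cancellative, so are projective $R[X_1,\ldots,X_n]$-modules of rank $d$.

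So the proof is essentially a single invocation. First I would take $R = A$, which is a finitely generated $\BZ$-algebra of dimension $d$ with $1/d! \in A$ by hypothesis, and set $m = 0$, so that $B = A[X_1,\ldots,X_n]$. By Theorem \ref{z1}(iii), $A^d$ is cancellative; moreover, the stronger statement of Mohan Kumar-Murthy-Roy quoted there gives that every projective $A$-module of rank $d$ is cancellative. Then the ``in particular'' clause of Theorem \ref{w002} immediately yields that every projective $A[X_1,\ldots,X_n]$-module of rank $d$ is cancellative, which is what we want.

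There is essentially no obstacle here: the work has all been done in the preliminaries (Theorem \ref{w002} absorbs Wiemers' machinery, and Theorem \ref{z1}(iii) absorbs the Vaserstein and Mohan Kumar-Murthy-Roy input). The only thing to check is that the hypotheses of Theorem \ref{w002} are literally met -- namely that $\dim A = d$, that $1/d! \in A$, and that cancellation is known for rank-$d$ projectives over the base ring $A$ itself (not merely for free modules); all three are given or are immediate from Theorem \ref{z1}(iii). One could alternatively cite only that $A^d$ is cancellative and combine with Theorem \ref{q5} (every finite extension of a finitely generated $\BZ$-algebra of dimension $d$ is again one, so $R^d$ is cancellative for all such $R$) to upgrade to all rank-$d$ projectives over $A$ before feeding into Theorem \ref{w002}, but this detour is unnecessary since the cited MMR result already provides the rank-$d$ statement over $A$. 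Hence the proof is a two-line application of Theorem \ref{w002} and Theorem \ref{z1}(iii).
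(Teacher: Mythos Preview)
Your proposal is correct and matches the paper's own argument exactly: the paper states that this theorem follows directly from Theorem~\ref{w002} by applying Theorem~\ref{z1}(iii). Your additional remark about the alternative route through Theorem~\ref{q5} is a pleasant observation but, as you say, unnecessary.
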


\begin{theorem}
Let $k\subset \ol \BF_p$ be a field and let $A$ be an affine
$k$-algebra of dimension $d$. Assume that $p >d$.  Then all
projective $A[X_1,\ldots,X_n]$-modules of rank $d$ are cancellative.
\end{theorem}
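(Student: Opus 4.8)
The plan is to obtain this as an immediate consequence of Wiemers' addition-of-variables theorem \ref{w002} together with the cancellation result \ref{a1}, as announced in the sentence preceding the statement.

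First I would dispose of the numerical hypothesis. Since $k\subseteq\ol\BF_p$ contains the prime field $\BF_p$ and $p>d$ implies $p\nmid d!$, the element $d!$ is a unit in $\BF_p$, hence in $k$, hence in $A$; so $A$ is a ring of dimension $d$ with $1/d!\in A$. This is exactly what is needed to invoke \ref{w002} with $R=A$ and no Laurent variables ($m=0$): it reduces the assertion for $A[X_1,\dots,X_n]$ to the assertion that every projective $A$-module of rank $d$ is cancellative, and then the polynomial-extension conclusion follows verbatim from the ``in particular'' clause of \ref{w002}.

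Next, for $d\geq 2$ this reduced assertion is precisely the content of \ref{a1} (which itself rests on \ref{q5} and the Suslin--Vaserstein stable-range bound \ref{svv}), so the proof is complete in that range. For $d\leq 1$ the statement is elementary and needs no hypothesis on $p$ or on the base field: a projective module of rank $0$ is zero, and if $P$ has rank $1$ then from $P\op B^m\iso Q\op B^m$ over $B=A[X_1,\dots,X_n]$ one recovers $P\cong\det(P\op B^m)\iso\det(Q\op B^m)\cong Q$ by passing to top exterior powers. Assembling the two ranges gives the theorem.

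I do not anticipate a genuine obstacle here; the argument is essentially bookkeeping once \ref{w002} and \ref{a1} are in hand. The only point requiring attention is that \ref{a1} carries the restriction $d\geq 2$, which is why the rank-$0$ and rank-$1$ cases are peeled off separately by the exterior-power remark above; everything else is a direct appeal to already-proved statements, and the single computational check — that $p>d$ forces $1/d!\in A$ — is immediate.
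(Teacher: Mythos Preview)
Your proposal is correct and follows exactly the route the paper indicates: the theorem is stated as a direct consequence of Wiemers' result (\ref{w002}) applied together with (\ref{a1}), and your verification that $p>d$ forces $1/d!\in A$ is the only computation needed. The separate treatment of $d\leq 1$ via top exterior powers is a harmless addendum covering the range excluded by the hypothesis $d\geq 2$ in (\ref{a1}), which the paper leaves implicit.
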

\vspace*{.1in}

\noindent{\bf Acknowledgments.}
I sincerely thank S.M. Bhatwadekar 
for useful discussion on (\ref{cartesian1}).

{\small

}


\begin{thebibliography}{}

\bibitem{Bas}{} H. Bass, {\it K-theory and stable algebra}, IHES {\bf
22} (1964), 5-60.

\bibitem{bh01}{} S.M. Bhatwadekar, {\it Cancellation theorem for
projective modules over a two-dimensional ring and its polynomial
extension}, Compositio Math. {\bf 128} (2001), 339-359.

\bibitem{Bhh}{} S.M. Bhatwadekar, {\it A cancellation theorem for
projective modules over affine algebras over $C_1$-fields}, JPAA {\bf
183} (2003), 17-26.

\bibitem{bh00}{} S.M. Bhatwadekar, {\it Projective modules over
affine algebras}, Survey article.

\bibitem{Bh-Roy}{} S.M. Bhatwadekar and A. Roy, {\it Some theorems about
projective modules over polynomial rings}, J. Algebra (1984), 150-158.

\bibitem{kes}{} M.K. Keshari, {\it A note on projective modules over
real affine algebras}, J. Algeba {\bf 278} (2004), 628-637.

\bibitem{kes05}{} M.K. Keshari, {\it Stability results for projective
modules over blowup rings}, J. Algebra {\bf 294} (2005), 226-238.

\bibitem{kes07}{} M.K. Keshari, {\it Euler class group of a Laurent
polynomial ring: local case}, J. Algebra {\bf 308} vol 2, (2007),
666-685.

\bibitem{lindel}{} H. Lindel, {\it Unimodular elements in projective
modules}, J. Algebra {\bf 172} (1995), 301-319.

\bibitem{mk}{} N. Mohan Kumar, {\it Stably free modules},
Amer. J. Math. {\bf 107} (1985), 1439-1444.

\bibitem{mmr}{} N. Mohan Kumar, M.P. Murthy and A. Roy, {\it A
cancellation theorem for projective modules over finitely generated
rings}, in: Hijikata H., et al. (Eds.), Algebraic geometry and
commutative algebra in honor of Masayoshi Nagata, vol. 1, (1987),
281-287.

\bibitem{murthy00}{} M.P. Murthy, {\it Cancellation problem for
projective modules over certain affine algebras}, Proceedings of the
international colloquium on Algebra, Arithmetic and Geometry, Mumbai,
Narosa Publishing House, (2000), 493-507.

\bibitem{parimala}{} M. Ojanguren and R. Parimala, {\it Projective
modules over real affine algebras}, Math. Ann. {\bf 287} (1990),
181-184.

\bibitem{plumstead}{} B. Plumstead, {\it The conjectures of Eisenbud
and Evans}, Amer. J. Math. {\bf 105} (1983), 1417-1433.

\bibitem{quillen}{} D. Quillen, {\it Projective modules over
polynomial rings}, Invent. Math. {\bf 36} (1976), 167-171.

\bibitem{ravi1}{} R.A. Rao, {\it A question of H. Bass on the
cancellative nature of large projective modules over polynomial
rings}, Amer. J. Math. {\bf 110} (1988), 641-657.

\bibitem{serre}{} J.P. Serre, {\it Sur les modules projectifs},
Sem. Dubreil-Pisot {\bf 14} (1960-61), 1-16.

\bibitem{Serre68}{} J.P. Serre, {\it Sur la dimension cohomologique
des groupes profinis}, Topology {\bf 3} (1968), 264-277.

\bibitem{suslin}{} A.A. Suslin, {\it Cancellation over affine
varieties}, J.Sov. Math. {\bf 27} (1984), 2974-2980.

\bibitem{suslin1}{} A.A. Suslin, {\it A cancellation theorem for
projective modules over affine algebras}, Sov. Math. Dokl. {\bf 18}
(1977), 1281-1284.

\bibitem{Su1}{} A.A. Suslin, {\it On the structure of the special
linear group over polynomial rings}, Math. USSR-Izv. {\bf 11} (1977),
221-238.

\bibitem{susl}{} A.A. Suslin, {\it Projective modules over a
polynomial ring are free}, Sov. Math. Dokl. {\bf 17} (1976),
1160-1164.

\bibitem{su}{} A.A. Suslin and L.N. Vaserstein, {\it Serre's problem
on projective modules over polynomial rings and algebraic K-theory},
Math. USSR, Izvestija {\bf 10} (5) (1976), 937-1001.

\bibitem{Swan}{} R.G. Swan, {\it Projective modules over Laurent
polynomial rings}, Trans. Amer. Math. Soc. {\bf 237} (1978), 111-120.

\bibitem{Wiemers}{} A. Wiemers, {\it Cancellation properties of
projective modules over Laurent polynomial rings}, J. Algebra {\bf
156} (1993), 108-124.

\end{thebibliography}
\end{document}